\theoremstyle{plain}
\newtheorem{theorem}{Theorem}[section]
\newtheorem{corollary}[theorem]{Corollary}
\newtheorem{proposition}[theorem]{Proposition}
\newtheorem{lemma}[theorem]{Lemma}
\newtheorem{example}[theorem]{Example}
\theoremstyle{definition}
\newtheorem{definition}[theorem]{Definition}
\newtheorem{remark}[theorem]{Remark}
\newcommand\restr[2]{{
  \left.\kern-\nulldelimiterspace 
  #1 
  \right|_{#2} 
}}
\newcommand{\R}{\mathbb{R}}
\renewcommand{\S}{\mathbb{S}}
\renewcommand{\d}{\mathrm{d}}
\newcommand{\df}{\Omega}
\newcommand{\Cinfty}{\mathscr{C}^\infty}
\newcommand{\T}{\mathrm{T}}
\newcommand{\cT}{\mathrm{T}^\ast}
\newcommand{\X}{\mathfrak{X}}
\newcommand{\C}{\mathcal{C}}
\newcommand{\parder}[2]{\frac{\partial #1}{\partial #2}}
\newcommand{\dparder}[2]{\dfrac{\partial #1}{\partial #2}}
\DeclareMathOperator{\Ima}{Im}
\DeclareMathOperator{\inn}{\iota}
\DeclareMathOperator{\rk}{rank}
\DeclareMathAlphabet{\mathpzc}{OT1}{pzc}{m}{it}
\def\d{\mathrm{d}}
\title{{\sffamily 
On Darboux theorems for geometric structures\\\vskip .2cm induced by closed forms}}
\author{{\sffamily 
$^a$Xavier Gràcia%
\thanks{e-mail:
   xavier.gracia@upc.edu \ ORCID: 0000-0003-1006-4086}\ ,\
$^b$Javier de Lucas%
\thanks{e-mail:
   javier.de.lucas@fuw.edu.pl \ ORCID: 0000-0001-8643-144X}\ ,\
$^c$Xavier Rivas%
\thanks{e-mail:
   xavier.rivas@unir.net \ ORCID: 0000-0002-4175-5157}\ ,\
$^a$Narciso Román-Roy%
\thanks{e-mail:
   narciso.roman@upc.edu \ ORCID: 0000-0003-3663-9861}
}
\\[2ex]
\normalsize\itshape\sffamily
$^a$Department of Mathematics,
\\ \normalsize\itshape\sffamily 
Universitat Polit\`ecnica de Catalunya, Barcelona, Spain.
\\[1ex]
\normalsize\itshape\sffamily
$^b$UW Institute for Advanced Studies,\\
\normalsize\itshape\sffamily
Department of Mathematical Methods in Physics, University of Warsaw, Warszawa, Poland.
\\[1ex]
\normalsize\itshape\sffamily
$^c$Escuela Superior de Ingenier\'{\i}a y Tecnolog\'{\i}a,\\
\normalsize\itshape\sffamily
Universidad Internacional de La Rioja, Logro\~no, Spain.
\\[1ex]
}
\begin{document}

\maketitle

\parskip=4pt
\vskip-2cm
\begin{abstract}

This work reviews the classical Darboux theorem for symplectic, presymplectic, and cosymplectic manifolds (which are used to describe regular and singular mechanical systems), and certain cases of multisymplectic manifolds, and extends it in new ways to $k$-symplectic and $k$-cosymplectic manifolds (all these structures appear in the geometric formulation of first-order classical field theories). Moreover, we discuss the existence of Darboux theorems for classes of precosymplectic, $k$-presymplectic, $k$-precosymplectic, and premultisymplectic manifolds, which are the geometrical structures underlying some kinds of singular field theories. Approaches to Darboux theorems based on flat connections associated with geometric structures are given, while new results on polarisations for ($k$-)(pre)(co)symplectic structures arise.

\end{abstract}

\vskip -0.1cm
\noindent {\bf Keywords:}
Darboux theorem, flat connection, $k$-cosymplectic manifold, $k$-precosymplectic manifold, $k$-presymplectic manifold, $k$-symplectic manifold, multisymplectic manifold, premultisymplectic manifold.
\vskip -0.1cm
\noindent {\bf MSC\,2020:} \textsl{Primary:}
53C15, 
53C12. 
\textsl{ Secondary:}
53D05, 
53C10. 

 



{\setcounter{tocdepth}{2}
\def\baselinestretch{1}
\small
\def\addvspace#1{\vskip 1pt}
\parskip 0pt plus 0.1mm
\tableofcontents
}


\section{Introduction}

Since its very origins, differential geometry has been applied to many branches of mathematical physics to study different kinds of physical systems, and it has led to many developments. Symplectic geometry, namely the study of closed non-degenerate two-forms, the so-called {\it symplectic forms}, was one of the first areas of differential geometry to be introduced. 
Symplectic geometry has its origins in the study of celestial mechanics \cite{Mar_09}, it has a relevant role in classical mechanics \cite{AMR_88,Can_01}, and it has inspired the development of many other useful geometric structures with relevant applications \cite{AM_78,AG_00,LSV_15,Kij_73,KT_79,LM_87}.

One of the fundamental results in symplectic geometry is the {\it Darboux theorem},
which describes the local structure of finite-dimensional symplectic manifolds \cite{Dar_82}. Roughly speaking, the Darboux theorem states that a symplectic form can be locally written as a differential form with constant coefficients of a particular type, namely as the canonical symplectic form of a cotangent bundle in adapted coordinates, $\omega = \d q^i\wedge\d p_i$ \cite{AM_78,Lee_13}. There exist several types of infinite-dimensional symplectic manifolds, and some of them do not admit a Darboux theorem \cite{Mar_72}. Hereafter, we focus on finite-dimensional manifolds, unless otherwise stated.
   
The Darboux theorem can be proved in different ways \cite{AM_78,BCGGG_91},
and its proof can be extended to presymplectic forms, namely closed two-forms of constant rank \cite{CP_86}.
It is well-known that symplectic and presymplectic forms describe
the phase spaces for autonomous regular and singular dynamical systems in mechanics.
For non-autonomous mechanical systems, the suitable structures are the so-called
{\it cosymplectic} and {\it precosymplectic} manifolds \cite{CLM_94,LMM_96}.

As a preliminary goal, this paper reviews the theory of Darboux theorems for symplectic and presymplectic manifolds, and it analyses their relation to the so-called flat compatible symplectic and presymplectic connections \cite{BCGRS_06,GRS_98,Vai_00}. Connections are hereafter assumed to be linear and torsionless, being the second condition usual in the literature and a key to the description of certain features of the differential forms and integrable distributions to be studied in this work. We also provide proofs of the Darboux theorems for cosymplectic and precosymplectic manifolds. The Darboux theorem for precosymplectic structures is assumed in many references but its proof seems to be absent in the literature \cite{CLM_94}.

To achieve a geometrical covariant description of (first-order) classical field theories,
the above-mentioned structures have been generalised in several ways.
The simplest ones are the so-called {\it $k$-symplectic manifolds}, introduced by A. Awane 
\cite{Awa_92, AG_00} and used later by M. de Le\'{o}n \textit{et al.} 
\cite{LMOS_97, LMS_88, LMS_88a} and L.K. Norris \cite{MN_00, Nor_93} for describing first-order classical field theories.
They coincide with the {\it polysymplectic manifolds}
described by  G.C. G\"unther \cite{Gun_87}
(although these last ones are different from those introduced
by G. Sardanashvily~et al.~\cite{GMS_97,Sar_95} 
and I.V. Kanatchikov \cite{Kan_98}, that are also called {\it polysymplectic}).
This structure is used to give a geometric description of regular field theories
whose Lagrangian and/or Hamiltonian functions, in a local description,
do not depend on the space-time coordinates (or the analogous).
In the degenerate case we use {\it $k$-presymplectic structures},
which allow us to describe the corresponding field theories given by singular Lagrangian functions \cite{GMR_09}. It is worth stressing that there exist several ways of defining $k$-presymplectic manifolds, some of which have apparently been proposed and studied in the present work for the first time. 

A natural extension of $k$-symplectic manifolds are {\it $k$-cosymplectic manifolds},
which enable one to generalise the cosymplectic description of
non-autonomous mechanical systems to regular field theories
whose Lagrangian and/or Hamiltonian functions, in a local description,
depend on the space-time coordinates (or the analogous) \cite{LMORS_98,LMS_01}.  As previously, the singular case of these theories leads to the introduction of {\it $k$-precosymplectic manifolds}, which can be defined in different manners, as shown in this paper and studied in previous works \cite{GRR_20}.

The Darboux theorem was generalised and proved for $k$-symplectic manifolds in \cite{Awa_92,LMS_88a,FG_13} and for $k$-cosymplectic manifolds in \cite{LMORS_98}. The Darboux theorem plays a relevant role in these theories since, for instance, it significantly simplifies the proofs of many results \cite{RRSV_11}. In this work, we provide a (as far as we know) new  Darboux theorem for $k$-symplectic and $k$-cosymplectic linear spaces. We also provide new proofs for the Darboux theorems for $k$-symplectic and $k$-cosymplectic manifolds. Our proofs reveal new properties of such types of manifolds concerning the properties of their Lagrangian submanifolds. In particular, new details about the existence of the hereafter called polarisations for $k$-symplectic and $k$-cosymplectic manifolds are obtained. Moreover, classical proofs of the $k$-symplectic manifold rely on coordinates and special, rather lengthy calculations \cite{AG_00}. Others are focused on connections and give indirect proofs \cite{FG_13}. Meanwhile, our proof of the $k$-symplectic Darboux theorem is intrinsic and short. Moreover, our proof could have been made shorten by relying on known results, but we decided to give a full explanation of all the structures and results involved, which made it longer than strictly needed to prove the canonical form of $k$-symplectic manifolds.

Darboux theorems for $k$-symplectic manifolds are closely related to the notion of polarisation \cite{Awa_92}. This means that we search for coordinates where the $k$-(co)symplectic structures take a form with constant coefficients of a particular type. Nevertheless, one could find new coordinates where the $k$-(co)symplectic forms would take constant coefficients of another different type. This would potentially lead to Darboux coordinates of other types.

Moreover, one may try to find coordinates to put the differential forms of a $k$-symplectic structure on a canonical manner. This leads to the existence of a certain type of associated  distribution. Notwithstanding, Darboux coordinates can be defined to additionally put a basis of the distribution in a particular manner. It is worth noting that, in the case of $k$-symplectic manifolds, the conditions to obtain Darboux coordinates putting the associated differential forms into canonical form ensure that there exists a canonical basis of the distribution too. Meanwhile, our $k$-cosymplectic Darboux theorem shows that the conditions to put the differential forms associated with a $k$-cosymplectic manifold are different from those needed to ensure also a canonical form for a basis of the associated distribution. Moreover, our analysis  also sheds some light in the existence of Darboux coordinates for $k$-cosymplectic manifolds, and it complements the results given in previous works \cite{GM_23,Mer_97}. In particular, it is worth noting that Theorem II.4 and Theorem 5.2.1 in \cite{LMORS_98,Mer_97} can be slightly misleading, as part of the assumptions needed to prove such theorems are only described in Remark 2.5 and Note 5.2.1, after them, respectively. 


Then, we study $k$-presymplectic manifolds. These structures appear as a side problem in $k$-symplectic or multisymplectic theories \cite{FG_13}. We here prove that the very definition of a $k$-presymplectic manifold can be set in different ways, depending on the features that we want them to have, e.g. to fit the analysis of systems we are dealing with. Some of these notions of $k$-presymplectic manifold do not admit a Darboux theorem of the initially expected form, even for the linear $k$-presymplectic cases.  Then, we study some different possible definitions of $k$-presymplectic manifolds, and we provide some counterexamples showing that a Darboux theorem does not need to exist for them. This is quite unexpected, as it was previously assumed that Darboux theorems must be satisfied for them. It is worth noting that the authors in \cite{GMR_09} remark that the existence of a Darboux theorem for $k$-presymplectic manifolds is an open problem, although they skip this by giving intrinsic proofs of their results.  The same happens when we consider $k$-precosymplectic manifolds \cite{GRR_20} in order to deal with non-autonomous field theories described by singular Lagrangian functions.

As in the $k$-presymplectic manifolds case, one has the same type of problems and similar solutions are given. A Darboux theorem for precosymplectic manifolds has been provided. Although this result has been used in the literature \cite{CLM_94,LR_89}, it seems that a proof is missing. More generally, we have provided definitions of $k$-pre(co)symplectic manifolds admitting Darboux theorems. This gives an alternative approach to previous point-wise and local Darboux theorems in \cite{FG_13} for the $k$-presymplectic case. Moreover, our point-wise and local $k$-precosymplectic Darboux theorems seem to be new. Note also that $k$-precosymplectic manifold do not have canonically defined Reeb vector fields. This causes that the Darboux theorems may not involve the existence of a basis of them given in a canonical form. Moreover, the distribution defined to put the differential forms of $k$-precosymplectic manifolds in a canonical manner does not admit a canonical basis unless additional conditions are given.

Finally, we have the {\it multisymplectic manifolds}
first introduced by J. Kijowski, W.M. Tulczyjew, and other authors
\cite{Gar_74,GS_73,Kij_73,KT_79}, which constitute one of the most generic structures
for studying the behaviour of Lagrangian and Hamiltonian field theories (see \cite{Rom_09} and references therein).
Nevertheless, although there are some partial results
\cite{Mar_88},
a Darboux-type theorem for multisymplectic manifolds in general is not known.
In particular, a class of multisymplectic manifolds with a local structure defined by Darboux type
coordinates was characterised in \cite{CIL_99}, and
certain kinds of multisymplectic manifolds admitting Darboux coordinates
have been described in \cite{LMS_03},
giving a sufficient condition that guarantees the existence of Darboux charts.


While studying the different geometric structures, we analyse the existence of linear connections compatible with them. Some of our results are known, see for instance
symplectic connections \cite{BCGRS_06,GRS_98}, $k$-symplectic connections \cite{Bla_09}, $k$-cosymplectic \cite{Mer_97}, multisymplectic and polysymplectic connections \cite{FY_13}. On the other hand, some of the connections compatible with these and other structures are here proposed. Moreover, we here review the subject and serves as a reference point for further works.

The structure of the paper goes as follows. Section \ref{sec2:geomset} reviews the Darboux theorems for symplectic, presymplectic, cosymplectic, and precosymplectic structures and its relation to flat compatible connections. In Section \ref{sec3:k-symp}, we provide a new proof of the Darboux theorem for $k$-symplectic manifolds, which is simpler than previous proofs \cite{Awa_92,AG_00,Riv_22}. We also discuss the existence of Darboux coordinates in $k$-presymplectic manifolds and show that in order to ensure its existence, some very restrictive hypothesis are required. Section \ref{sec4:k-cosymp} is devoted to study the existence of Darboux coordinates in $k$-(pre)cosymplectic manifolds. We give a new proof of the Darboux theorem for $k$-cosymplectic manifolds \cite{LMORS_98}. We also see that it is not possible to ensure the existence of Darboux coordinates, unless some additional conditions are imposed. In Section \ref{sec5:multisymp} we review the existing results on Darboux coordinates for (pre)multisymplectic structures. Some new results on this topic are presented. Finally, Section \ref{sec:outlook} summarises our results and gives some hints on future work. It is worth noting that we explain how the use of flat connections with torsion may be used to study geometric structures related to differential forms that are not closed, such as contact ones. This will be the topic of another paper.

\section{Darboux theorems, flat connections, and symplectic-like structures}\label{sec2:geomset}

Let us set some general assumptions to be used throughout this work.
It is hereafter assumed that all structures are smooth. Manifolds are real, Hausdorff, connected, second countable, and finite-dimensional. 
Differential forms are assumed to
have constant rank, unless otherwise stated. Sum over crossed repeated indices is understood. Sometimes, the summation sign, $\Sigma$, will be used to make clear the range of the indexes we are summing over. All our considerations are local,
to avoid technical problems concerning the global existence of quotient manifolds and similar issues. 
Hereafter, $M$ and $Q$ are assumed to be manifolds, $\mathfrak{X}(M)$ and $\Omega^k(M)$ stand for the $\Cinfty(M)$-modules of vector fields and differential $k$-forms on $M$. Moreover, connections are assumed to be linear and torsion-free.

More particularly, this section reviews  (co)symplectic and (co)presymplectic manifolds and give the corresponding Darboux theorems. It also analyses the relation of Darboux theorems with compatible flat connections. We will also introduce the concept of characteristic distribution, as it will play an important role when generalising,  in Sections \ref{sec3:k-symp} and \ref{sec4:k-cosymp}, the results of this section.

\subsection{Symplectic and presymplectic manifolds}

This section reviews the definition of symplectic and presymplectic manifolds, and it also analyses their corresponding Darboux theorems. In the context of presymplectic manifolds, we recall the definition of their characteristic distributions. For symplectic and presymplectic manifolds, the relation between compatible connections and Darboux coordinates is studied.

\begin{definition}\label{dfn-symplectic-manifold}
    A {\it symplectic manifold} is a pair $(M,\omega)$, where $M$ is a manifold and $\omega$ is a closed differential two-form on $M$ that is {\it non-degenerate}, i.e. the contraction $\inn_X\omega=0$, for a vector field $X$ on $M$, if and only if $X=0$. 
\end{definition}

The canonical model for symplectic manifolds is the cotangent bundle of a  manifold $Q$, namely $(\cT Q,\omega_Q)$, where $\omega_Q\in\Omega^2(\cT Q)$ is the canonical \textit{symplectic two-form} in $\cT Q$, whose local expression in adapted coordinates $\{q^i,p_i\}$ of $\cT Q$ on their associated coordinated open subset of $\cT Q$ is $\omega_Q = \d q^i\wedge\d p_i$.

A symplectic manifold $(M,\omega)$ gives rise to the \textit{musical} (vector bundle) isomorphism $\flat: \T Q\rightarrow \cT Q$ and its inverse $\sharp:\cT Q\to \T Q$ naturally induced by the $\Cinfty(M)$-module isomorphisms
\begin{align*}
    \flat \colon \X(M) & \longrightarrow \Omega^1(M)\\
    X &\longmapsto \inn_X\omega
\end{align*}
and $\sharp = \flat^{-1}$. Note that a vector bundle morphism $\flat$ can be defined for every two-form $\omega$, but $\sharp$ only exists when $\flat$ is invertible or, equivalently, $\omega$ is non-degenerate.

\begin{definition}
    Let $(M,\omega)$ be a symplectic manifold. Given a distribution $D\subset\T M$, the \textit{symplectic orthogonal} of $D$ is defined by $D^\perp = \coprod_{x\in M}D^\perp_x$, where
    $$ D_x^\perp = \{v\in\T_xM\mid \omega_x(v,u) = 0,\, \forall u\in D_x\}\,, $$
    and $\displaystyle\coprod_{x\in M}D_x^\perp$ stands for the disjoint sum of all $D_x^\perp$ over $x\in M$.
\end{definition}

Symplectic orthogonals allow us to introduce several types of submanifolds of symplectic manifolds.

\begin{definition}
    Let $(M,\omega)$ be a symplectic manifold and consider a submanifold $N\subset M$. Then,
    \begin{itemize}
        \item the submanifold $N$ is said to be \textit{isotropic}, if $\T N\subset\T N^\perp$.
        \item the submanifold $N$ is \textit{coisotropic}, if $\T N^\perp \subset \T N$.
        \item the submanifold $N$ is \textit{Lagrangian} if it is isotropic and coisotropic, namely if $\T N^\perp = \T N$. Lagrangian submanifolds are also called \textit{maximally isotropic} and then $2\dim N = \dim M$.
    \end{itemize}
\end{definition}

\begin{definition}
    Two symplectic manifolds $(M_1,\omega_1)$ and $(M_2,\omega_2)$ are \textit{symplectomorphic} if there exists a diffeomorphism $\phi: M_1\to M_2$ such that $\phi^*\omega_2 = \omega_1$.
\end{definition}

The classical \textit{Darboux theorem} states that every symplectic manifold is locally {\it symplectomorphic}
to a cotangent bundle endowed with its canonical symplectic structure \cite{AM_78,BCGGG_91}. The Darboux theorem was initially proved by Darboux \cite{Dar_82}, but its modern standard proof relies on the so-called Moser's trick \cite[Theorem 22.13]{Lee_13}. The statement of the Darboux theorem for symplectic manifolds goes as follows.

\begin{theorem}\label{thm-darboux-symplectic}
    Let $(M, \omega)$ be a symplectic manifold. Then, for every $x\in M$, there exist local coordinates $\{ q^i, p_i\}$ around $x$ where $\omega = \d q^i\wedge\d p_i$.
\end{theorem}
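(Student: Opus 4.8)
The plan is to use Moser's trick, the standard modern approach. First I would reduce to a linear-algebra normal form at the single point $x$: since $\omega_x$ is a non-degenerate skew-symmetric bilinear form on $\T_xM$, a basis of $\T_xM$ can be chosen in which $\omega_x$ has the canonical block form $\sum_i e^i\wedge f^i$; extending this basis to a coordinate chart centred at $x$ gives coordinates in which $\omega$ and the constant-coefficient form $\omega_0 := \d q^i\wedge\d p_i$ agree at $x$ (though not in a neighbourhood). Shrinking the chart to a ball, both $\omega$ and $\omega_0$ are closed, and $\omega - \omega_0$ is closed and vanishes at $x$; by the Poincaré lemma write $\omega-\omega_0 = \d\alpha$ for a one-form $\alpha$, which after subtracting a constant we may take to vanish at $x$ together with making $\alpha_x = 0$.

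Next I would set $\omega_t := \omega_0 + t(\omega-\omega_0)$ for $t\in[0,1]$ and observe that each $\omega_t$ is closed and, at $x$, equals $\omega_x$, hence non-degenerate at $x$; by continuity and shrinking the neighbourhood once more, $\omega_t$ is non-degenerate on the whole chart for all $t\in[0,1]$ simultaneously (a compactness argument in $t$). Then, using non-degeneracy of $\omega_t$, define the time-dependent vector field $X_t$ by the equation $\inn_{X_t}\omega_t = -\alpha$; this is where the musical isomorphism $\sharp$ associated with $\omega_t$ (as in the discussion after Definition~\ref{dfn-symplectic-manifold}) is used. Since $\alpha_x=0$ we get $X_t(x)=0$, so the flow $\psi_t$ of $X_t$ is defined for all $t\in[0,1]$ on a possibly smaller neighbourhood of $x$ and fixes $x$.

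Finally I would compute, via Cartan's magic formula,
\[
\frac{\d}{\d t}\,\psi_t^*\omega_t
= \psi_t^*\!\left(\Lie_{X_t}\omega_t + \frac{\d\omega_t}{\d t}\right)
= \psi_t^*\!\left(\d\,\inn_{X_t}\omega_t + (\omega-\omega_0)\right)
= \psi_t^*\!\left(-\d\alpha + \d\alpha\right) = 0,
\]
using $\d\omega_t=0$ for the Lie derivative and $\d\alpha = \omega-\omega_0$ for the last step. Hence $\psi_1^*\omega_1 = \psi_0^*\omega_0 = \omega_0$, i.e. $\psi_1^*\omega = \omega_0 = \d q^i\wedge\d p_i$; pulling back the coordinates $\{q^i,p_i\}$ through $\psi_1$ yields the desired Darboux coordinates around $x$. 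The main obstacle, and the only place real care is needed, is the triple shrinking of the neighbourhood: ensuring simultaneous non-degeneracy of the whole family $\omega_t$ and ensuring the flow $\psi_t$ exists up to time $t=1$; both are handled by the vanishing of $X_t$ at $x$ together with standard compactness arguments on $[0,1]$.
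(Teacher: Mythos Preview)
Your proposal is correct and follows the Moser's trick approach, which is precisely what the paper itself points to: it does not give a self-contained proof of Theorem~\ref{thm-darboux-symplectic} but explicitly states that ``its modern standard proof relies on the so-called Moser's trick'' and cites Lee's textbook. So your argument is essentially the proof the paper has in mind.

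The only difference worth noting is that, after stating the theorem, the paper sketches an alternative route via flat symplectic connections: if one \emph{assumes} a flat torsion-free connection $\nabla$ with $\nabla\omega=0$, then parallel-transporting a symplectic basis of $\T_xM$ yields commuting coordinate vector fields in which $\omega$ has constant coefficients. This is shorter but, as the paper acknowledges, the local existence of such a flat connection is itself typically established \emph{from} Darboux coordinates, so the argument is more of a reformulation than an independent proof. Your Moser's-trick argument is the genuine, self-contained one.
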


Note that the Darboux theorem amounts to saying that there exist, on a neighbourhood of any $x\in M$, two foliations by transversal Lagrangian submanifolds. 

In infinite-dimensional manifolds, one can still define a symplectic form, but the induced musical morphism $\flat:\X(M)\rightarrow \Omega^1(M)$ is, in general, only injective. This gives rise to the so-called {\it weak symplectic manifolds}. Meanwhile, if $\flat:\mathfrak{X}(M)\rightarrow \Omega^1(M)$ is an isomorphism, then the symplectic manifold is said to be a {\it strong symplectic manifold}. There exists no Darboux theorem for general weak symplectic manifolds \cite{Mar_72}. Nevertheless, by requiring appropriate additional conditions, an analogue can be derived \cite{Tro_76}.


Let us introduce the notion of symplectic connection \cite{BCGRS_06,Sle_70}. Note that the torsion-free assumption is common in the literature, and it is a key for certain results to be developed. Indeed, we will show in Section \ref{sec:outlook} that skipping it leads to a more general theory, but more involved, inappropriate, and unnecessary for our present work. Apart from that last comment to be given in the conclusions of this work, all connections are assumed to be  torsion-free, unless otherwise stated.

\begin{definition}\label{def:symplectic-connection}
A {\it symplectic connection} on a symplectic manifold $(M,\omega)$ is a  connection $\nabla$ on $M$ such that $\nabla\omega = 0$.  The symplectic form $\omega$ is said to be {\it parallel} relative to $\nabla$.
\end{definition}
Every symplectic manifold admits local symplectic connections. 
Indeed, as a consequence of the Darboux theorem, one can construct a local symplectic flat connection for $(M,\omega)$ around every $x\in M$ by assuming that its Christoffel symbols vanish in some Darboux coordinates defined around $x$. 
Such a connection is flat and torsion-free. In general,  flat symplectic connections cannot be globally defined, as it is known that the curvature of a connection is linked to the topology of the manifold where it is defined on. Milnor proved and surveyed in \cite{Mil_58} several results on the existence of connections on the tangent bundle to a manifold. For instance, the tangent bundle to a closed and oriented surface of genus $g$ has no flat connection if $|2-2g|\geq g$. The sphere $\S^2$ has zero genus. Hence, there is no flat connection on $\S^2$, which admits a natural symplectic structure $\omega_{\S^2}$ that can be defined by considering $\S^2$ embedded in $\R^3$ and setting
$$
(\omega_{\S^2})_x(v_x,v'_x)=\langle x,v_x\times v'_x\rangle\,,\qquad \forall x\in \mathbb{R}^3\,,\quad\forall v_x,v'_x\in \T_x\S^2\subset \T_x\R^3\simeq \R^3\,,
$$
where tangent vectors at $x\in\S^2$ are naturally understood as vectors in $\mathbb{R}^3$ and, hence, their vector products are defined. Meanwhile, $\langle \cdot,\cdot\rangle$ stands for the natural scalar product in $\mathbb{R}^3$.  
Let us recall that, if a connection is flat, the parallel transport of a tangent vector along a path contained in a small open set $U$ does not depend on the path. Thus, a basis $\{e_1,\ldots,e_n\}$ of $\T_xM$ gives rise, by parallel transport, to a family of vector fields $X_1,\ldots,X_n$ on $U\subset M$, 
 such that $X_i(x)=e_i$ for $i=1,\ldots,n$. Then, $\nabla_{X_i}X_j=0$ and, since $\nabla$ is torsion-free, one has that
$$
0=T(X_i,X_j)=\nabla_{X_i}X_j-\nabla_{X_j}X_i-[X_i,X_j]=-[X_i,X_j]\,,\qquad \forall i,j=1,\ldots,n\,.
$$
Hence, there exist coordinates $\{x^1,\ldots,x^n\}$ on a neighbourhood of $x$ such that $X_i=\partial/\partial x^i$ for $i=1,\ldots,n$. Moreover, the Christoffel symbols of the connection vanish on $U$. 

Using the above result and assuming the local existence of a flat torsion-free connection compatible with a symplectic form, one may prove the  Darboux theorem in a very easy manner. In fact, every symplectic form $\omega$ on $M$ can be put into canonical form at any point $x\in M$ for a certain basis $\{e_1,\dotsc,e_n\}$ of $\cT_xM$, i.e.
$$
\omega_x=\sum_{i=1}^ne^{2i-1}\wedge e^{2i}\,.
$$
Recall that on a neighbourhood of every point $x\in M$, one can define a coordinate system $\{x^1,\ldots,x^n\}$ around $x$ so that there exists vector fields $X_i=\partial/\partial x^i$, with $i=1,\ldots,n$, such that
$$
\nabla_{X^i}X^j=0\,,\qquad \forall i,j=1,\ldots,n\,,
$$
and $X_i(x)=e_i$ for $i=1,\ldots,n$. 
Since $\nabla$ is a compatible symplectic connection for $\omega$, one has that
$$
\nabla_{X^i}\omega(X^j,X^k)=0\,,\quad \forall i,j,k=1,\dotsc,n\,.
$$
Hence, one has
$$
\omega = \sum_{i=1}^{2n}\d x^{2i-1}\wedge \d x^{2i}\,. 
$$
In a similar way, but weakening the conditions in Definition \ref{dfn-symplectic-manifold}, we can introduce the concept of presymplectic manifold. Recall that we assume differential forms to have constant rank.

\begin{definition}\label{dfn:presymplectic-manifold}
    A {\it presymplectic form} on $M$ is a closed two-form $\omega\in\Omega^2(M)$ of constant rank. The pair $(M,\omega)$ is called a {\it presymplectic manifold}.
\end{definition}

Let us construct a  prototypical example of presymplectic manifold.  Let $(M,\omega)$ be a symplectic manifold, 
and let $N$ be a submanifold of $M$. Consider the canonical embedding
denoted by $\jmath_N\colon N \hookrightarrow M$, 
and endow $N$ with the induced two-form 
$\omega_N = \jmath_N^\ast\omega$, which
is closed.
Then,  
$(N, \omega_N)$ is a presymplectic manifold provided the rank of $\omega_N$ is constant. To see that the condition on the rank of $\jmath_N^*\omega$ is necessary, let us consider the counter example given by the canonical two form, $\omega=\d x\wedge \d p_x+\d y\wedge \d p_y$, on  $\cT\mathbb{R}^2$ and the immersed submanifold given by $\jmath_{\cT\mathbb{R}}:(x,p_x)\in \cT\mathbb{R}\mapsto  (x,p_x^2/2,0,p_x)\in \cT\mathbb{R}\times \cT\mathbb{R}\simeq \cT \mathbb{R}^2$. Then, $\jmath_{\cT \mathbb{R}}^*\omega=p_x\d x\wedge \d p_x$, which is not symplectic at the zero section of $\cT \mathbb{R}$.

Before introducing the characteristic distribution associated with a presymplectic manifold, let us fix some terminology about distributions. 
A (generalised) \textit{distribution} on $M$ is a subset $D\subset \T M$ such that $D\cap \T_xM$ is a vector subspace of $\T_xM$ for every $x\in M$. 
A distribution $D$ on $M$ is said to be \textit{smooth} if, for every $x\in M$, there exists a neighbourhood $U_x$ of $x$ and (smooth) vector fields $X_1,\ldots,X_k$ on $U_x$ so that $D_y=\langle X_1(y),\ldots,X_k(y)\rangle$ for every $y\in U_x$. 
A generalised distribution $D$ is \textit{regular} if it is smooth and has constant rank. 
A (generalised) \textit{codistribution} on $M$ is a subset $C\subset\cT M$ such that $C_x = C\cap \cT_xM$ is a vector subspace of $\cT_xM$ for every $x\in M$. 
The smooth and/or regular notions introduced for distributions also apply to codistributions.


\begin{definition}
    Given a presymplectic manifold $(M,\omega)$, its {\it characteristic distribution} is  the distribution
    \begin{equation*}
        \C_\omega = \ker\omega = \{ v\in \T M \mid \omega(v,\cdot) = 0 \}\,.
    \end{equation*}
    A vector field $X\in\X(M)$ belonging to $\C_\omega$, i.e. such that $\inn_X\omega = 0$, is called a {\it characteristic vector field} of $(M,\omega)$.
\end{definition}

Note that $\C_\omega = \ker\flat$. In the case of symplectic manifolds, $\flat$ is a vector bundle isomorphism, and thus $\C_\omega = \{0\}$. Moreover, $\C_\omega$ is a distribution because $\omega$ has constant rank. But the kernel of a general closed two-form does not need to be a smooth generalised distribution. For example, $\omega_P=(x^2+y^2)\d x\wedge\d y$ is a closed two-form on $\R^2$, but it is not presymplectic, as its rank is not constant. Moreover, $\ker \omega_P$ is a generalised distribution $(\ker \omega_P)_{(0,0)} = \T_{(0,0)}\R^2$  while $(\ker\omega_P)_{(x,y)}=0$ for every $(x,y)\in \mathbb{R}^2$ different from $(0,0)$. Indeed, $\ker\omega_P$ is not even a smooth generalised distribution.  

\begin{proposition}
    The characteristic distribution $\C_\omega$ of a presymplectic manifold $(M,\omega)$ is integrable.
\end{proposition}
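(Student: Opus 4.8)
The plan is to prove integrability via the Frobenius theorem, checking that $\C_\omega$ is involutive: for any two characteristic vector fields $X,Y\in\Gamma(\C_\omega)$, the bracket $[X,Y]$ again lies in $\C_\omega$. Since $\omega$ has constant rank, $\C_\omega=\ker\flat$ is a regular distribution, so Frobenius applies and involutivity is exactly what we need to establish.

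First I would recall the identity for the Lie derivative and the contraction, namely Cartan's magic formula $\Lie_X = \inn_X\circ\d + \d\circ\inn_X$ together with the derivation rule $\Lie_X(\inn_Y\omega) = \inn_{[X,Y]}\omega + \inn_Y(\Lie_X\omega)$. Take $X,Y\in\Gamma(\C_\omega)$, so $\inn_X\omega=0$ and $\inn_Y\omega=0$. Then I would compute $\inn_{[X,Y]}\omega = \Lie_X(\inn_Y\omega) - \inn_Y(\Lie_X\omega) = -\inn_Y(\Lie_X\omega)$, because $\inn_Y\omega=0$ makes the first term vanish. Now $\Lie_X\omega = \inn_X\d\omega + \d\inn_X\omega = 0$, using that $\omega$ is closed ($\d\omega=0$) and that $\inn_X\omega=0$. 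Hence $\inn_{[X,Y]}\omega = 0$, i.e. $[X,Y]\in\Gamma(\C_\omega)$, so the distribution is involutive.

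To conclude, I would invoke the Frobenius theorem: a regular distribution that is involutive is integrable. Since $\C_\omega$ is regular (constant rank, as $\omega$ has constant rank, and smooth as the kernel of a constant-rank bundle map $\flat$) and involutive by the computation above, it is integrable, giving a foliation of $M$ whose leaves are the maximal integral submanifolds of $\C_\omega$.

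The only mild subtlety — rather than a genuine obstacle — is the verification that $\C_\omega$ is a smooth regular distribution in the first place, so that Frobenius is applicable; but this is immediate from the constant-rank hypothesis on $\omega$ already built into Definition \ref{dfn:presymplectic-manifold}, since locally $\C_\omega$ is the kernel of the constant-rank morphism $\flat\colon\T M\to\cT M$. Everything else is the short Cartan-calculus computation above, which uses closedness of $\omega$ in an essential way (as the non-closed example $\omega_P$ preceding the statement already hints).
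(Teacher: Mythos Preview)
Your proof is correct and follows essentially the same approach as the paper: closedness of $\omega$ plus constant rank plus the Frobenius theorem. The paper's proof is a single sentence naming precisely these three ingredients, while you have supplied the explicit Cartan-calculus computation verifying involutivity; there is nothing to add.
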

\begin{proof} The integrability of $\C_\omega$ follows from the closedness of the symplectic form $\omega$, the constancy of his rank, and the Frobenius theorem. 
\end{proof}

If $\omega$ is a presymplectic form on $M$, its characteristic distribution is integrable. Moreover, around every $x\in M$, there exists an open neighbourhood $U$ of $x$ such that the space of integral leaves of $\C_\omega$, let us say $U/\C_\omega$, admits a natural manifold structure and the projection $\pi:U\rightarrow U/\C_\omega$ is a submersion. Let us prove this fact.  Since $\mathcal{C}_\omega$ is integrable, the Frobenius theorem ensures that, for every $x\in M$, there exists a local basis of vector fields, 
$\{\partial/\partial x^1,\ldots,\partial/\partial x^k\}$, spanning $\mathcal{C}_\omega$ on a 
 coordinated neighbourhood $U_x$ of $x$ with coordinates $\{x^1,\ldots,x^n\}$. In a small enough open subset $U$ of $U_x$ containing $x$, one can assume that $x^{1},\ldots,x^n$ take values in an open ball of $\mathbb{R}^{n}$. Then, the space of leaves of $U/\mathcal{C}_\omega$ is a manifold of dimension $n-k$ and the mapping $\pi:U\rightarrow \mathbb{R}^{n-k}$ is an open submersion. We will then say that $\mathcal{C}_\omega$ is {\it simple} on $U$. Since $\omega$ is invariant relative to the elements of its characteristic distribution, and it vanishes on them, there exists a unique two-form $\widetilde\omega$ on $U/\C_\omega$ such that $\pi^*\widetilde\omega = \omega$. 
 In this way, $\widetilde\omega$ is closed and nondegenerate because if $\inn_{X_\C}\widetilde\omega = 0$, then there exists a vector field $X$ on $U$ such that $\pi_*X = X_\C$, and then $\inn_X\omega=0$. Since $\C_\omega = \ker\omega = \ker\T\pi$, then $X_\C = 0$. 

With this in mind, we are ready to state the Darboux theorem for presymplectic forms. 
Note that this theorem can be stated since presymplectic forms are assumed to have constant rank. 
Otherwise, it would be difficult to establish a series of canonical forms for closed two-forms even in the most simple cases, e.g. on $\mathbb{R}^2$.  

\begin{theorem}[Darboux theorem for presymplectic manifolds]\label{thm-darboux-presymplectic}
    Consider a presymplectic manifold $(M,\omega)$. Around every point $x\in M$, there exist local coordinates $\{q^i,p_i,z^j\}$, where $i = 1,\dotsc, r$ and $j = 1,\dotsc,d$, such that
    \begin{equation*}
        \omega = \sum_{i=1}^r\d q^i\wedge\d p_i\,,
    \end{equation*}
    where $2r$ is the rank of $\omega$. In particular, if $r=0$, then $\omega=0$ and $d=\dim M$. If $2r=\dim M$, then $d=0$ and $\{q^i,p_i\}$ give a local coordinate system of $M$. 
\end{theorem}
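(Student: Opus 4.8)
The plan is to reduce the presymplectic case to the symplectic Darboux theorem (Theorem \ref{thm-darboux-symplectic}) by passing to the quotient by the characteristic distribution, exactly as prepared in the paragraph preceding the statement. First I would fix $x\in M$ and, using the Frobenius theorem together with the fact that $\mathcal{C}_\omega=\ker\omega$ is an integrable regular distribution of rank $d=\dim M-2r$, choose an open neighbourhood $U$ of $x$ on which $\mathcal{C}_\omega$ is simple, so that the leaf space $U/\mathcal{C}_\omega$ carries a manifold structure of dimension $2r$ and $\pi\colon U\to U/\mathcal{C}_\omega$ is a surjective submersion. As recalled above, there is a unique two-form $\widetilde\omega$ on $U/\mathcal{C}_\omega$ with $\pi^*\widetilde\omega=\omega$, and $\widetilde\omega$ is closed and non-degenerate; hence $(U/\mathcal{C}_\omega,\widetilde\omega)$ is a symplectic manifold.

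Next I would apply Theorem \ref{thm-darboux-symplectic} to $(U/\mathcal{C}_\omega,\widetilde\omega)$ at the point $\pi(x)$: there exist local coordinates $\{q^i,p_i\}$, $i=1,\dots,r$, on a neighbourhood $V\subseteq U/\mathcal{C}_\omega$ of $\pi(x)$ with $\widetilde\omega=\sum_{i=1}^r\d q^i\wedge\d p_i$. Pulling these back, the functions $q^i\circ\pi$ and $p_i\circ\pi$ are smooth on $\pi^{-1}(V)$, and since $\pi$ is a submersion their differentials are pointwise linearly independent; moreover they are constant along the leaves of $\mathcal{C}_\omega$, so together they form a "transverse" coordinate system. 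To complete them to a full chart around $x$ I would use the Frobenius flat-box coordinates on $U$: shrinking if necessary, pick coordinates $\{x^1,\dots,x^n\}$ in which $\mathcal{C}_\omega=\langle\partial/\partial x^1,\dots,\partial/\partial x^d\rangle$; then $\{x^1,\dots,x^d\}$ restrict to coordinates on each leaf and can serve as the leaf-direction coordinates $z^j$, $j=1,\dots,d$, while $\{q^i\circ\pi,p_i\circ\pi\}$ supply the remaining $2r$ transverse coordinates. One checks that $\{z^j,q^i\circ\pi,p_i\circ\pi\}$ is a genuine coordinate system near $x$ because its differentials span $\cT_xM$ (the $\d z^j$ span the annihilator of a complement to $\mathcal{C}_\omega^\circ$... more simply: $\mathcal{C}_\omega=\ker\T\pi$ and the $z^j$ are independent along the fibres of $\pi$). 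Renaming $q^i\circ\pi\mapsto q^i$ and $p_i\circ\pi\mapsto p_i$, we get $\omega=\pi^*\widetilde\omega=\sum_{i=1}^r\d q^i\wedge\d p_i$ in these coordinates.

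Finally I would dispose of the two degenerate extremes stated in the theorem. If $r=0$ then $\widetilde\omega=0$ on a $0$-dimensional quotient, so $\omega=\pi^*\widetilde\omega=0$ and $\mathcal{C}_\omega=\T M$, giving $d=\dim M$ with $\{z^j\}$ an honest chart. If $2r=\dim M$ then $\mathcal{C}_\omega=\{0\}$, $\pi$ is a local diffeomorphism, no $z^j$ are needed ($d=0$), and $\{q^i,p_i\}$ already form a chart on $M$ near $x$ in which $\omega=\sum_{i=1}^r\d q^i\wedge\d p_i$.

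The main obstacle, and the only genuinely nontrivial point, is verifying that the pulled-back functions $q^i\circ\pi,p_i\circ\pi$ together with the leaf coordinates $z^j$ actually assemble into a coordinate chart — that is, that their $n$ differentials are linearly independent at $x$. This is where one uses that $\pi$ is a submersion with $\ker\T\pi=\mathcal{C}_\omega$ and that the $z^j$ restrict to coordinates along the leaves: the $\d(q^i\circ\pi),\d(p_i\circ\pi)$ span $(\mathcal{C}_\omega)^\circ\subset\cT_xM$ (being pullbacks of a coframe on the quotient), while the $\d z^j$ are independent modulo $(\mathcal{C}_\omega)^\circ$; a short linear-algebra argument then gives independence of the whole family. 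Everything else is bookkeeping once the symplectic Darboux theorem and the simple-foliation reduction are in hand.
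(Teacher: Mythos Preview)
Your proposal is correct and follows essentially the same route as the paper: pass to the local leaf space of the characteristic distribution $\mathcal{C}_\omega$, apply the symplectic Darboux theorem there, pull back the resulting coordinates, and complete them with leaf-direction functions $z^j$. The paper is slightly more cavalier at the end (it simply ``chooses $d$ other functions functionally independent relative to the previous ones'' rather than invoking Frobenius flat-box coordinates), but your more explicit verification that the combined family is a genuine chart is just a careful spelling-out of the same step.
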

\begin{proof}
    Consider an open neighbourhood $V$ of $x\in M$ where the integral foliation $\mathcal{F}$ defined by the distribution $\C_\omega = \ker\omega$ is simple. Let $P$ be the manifold of leaves of $\restr{\mathcal{F}}{V}$ and let $\pi \colon V\to P$ be the canonical projection. There exists a symplectic form $\bar\omega$ on~$P$ given by $\omega = \pi^\ast \bar\omega$.
The Darboux theorem for symplectic manifolds ensures that there exists an open coordinate neighbourhood $\bar U\subset P$ of $\pi(x)$ with local coordinates $\{\bar q^1,\dotsc,\bar q^r, \bar p_1,\dotsc,\bar p_r\}$ such that $\displaystyle\bar\omega = \sum_{i=1}^r\d \bar q^i\wedge\d \bar p_i$ on $\bar U$. Define $q^i = \bar q^i\circ\pi$ and $p_i = \bar p_i\circ\pi$ for $i=1,\ldots, r$ and we choose $d=\dim M - 2r$ other  functions $z^1,\ldots,z^d$, functionally independent relative to the previous ones. This gives rise to a local coordinate system $\{q^1,\dotsc,q^r,p_1,\dotsc,p_r,z^1,\dotsc,z^d\}$ around~$x$. This chart satisfies the conditions of the theorem.
\end{proof}

The definition of a presymplectic connection is a straightforward generalisation of Definition \ref{def:symplectic-connection} to the presymplectic realm (see \cite{Vai_00}).

\begin{definition}\label{def:presymplectic-connection}
    A {\it presymplectic connection} relative to a presymplectic manifold $(M,\omega)$ is a connection $\nabla$ on $M$ such that $\nabla\omega = 0$.
\end{definition}

The Darboux theorem for presymplectic forms implies that there exists, locally, a flat presymplectic connection. The other way around, the existence of a flat presymplectic connection for a presymplectic manifold $(M,\omega)$ allows us to prove the Darboux theorem as in the case of symplectic forms. In particular, we have proved the following.

\begin{lemma} 
Every presymplectic manifold $(M,\omega)$ admits locally defined flat presymplectic connections $\nabla$, i.e. $\nabla\omega=0$.
\end{lemma}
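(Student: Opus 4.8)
The plan is to invoke the Darboux theorem for presymplectic manifolds (Theorem~\ref{thm-darboux-presymplectic}) to produce, around every point $x\in M$, a chart $\{q^i,p_i,z^j\}$ in which $\omega=\sum_{i=1}^r\d q^i\wedge\d p_i$ has constant coefficients, and then define a connection on that chart by declaring all Christoffel symbols to vanish. Explicitly, on the Darboux coordinate domain $U$ one sets $\nabla_{\partial_a}\partial_b=0$ for all coordinate vector fields $\partial_a,\partial_b$ associated with $\{q^i,p_i,z^j\}$, and extends by $\Cinfty(U)$-linearity in the first argument and the Leibniz rule in the second. This $\nabla$ is manifestly flat (its curvature tensor vanishes since it vanishes on a coordinate frame of commuting vector fields) and torsion-free (as $\nabla_{\partial_a}\partial_b-\nabla_{\partial_b}\partial_a-[\partial_a,\partial_b]=0$ because the coordinate fields commute and all terms are zero), so it meets the standing assumptions on connections.

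It then remains to check that $\nabla$ is a presymplectic connection, i.e. $\nabla\omega=0$. Since $\nabla\omega$ is a tensor, it suffices to evaluate it on coordinate vector fields. For any $\partial_a,\partial_b,\partial_c$ one has
\[
(\nabla_{\partial_a}\omega)(\partial_b,\partial_c)=\partial_a\bigl(\omega(\partial_b,\partial_c)\bigr)-\omega(\nabla_{\partial_a}\partial_b,\partial_c)-\omega(\partial_b,\nabla_{\partial_a}\partial_c)=\partial_a\bigl(\omega(\partial_b,\partial_c)\bigr),
\]
because the two correction terms vanish by the choice of $\nabla$. But in Darboux coordinates the components $\omega(\partial_b,\partial_c)$ are constants (each equals $0$, $1$, or $-1$), so their coordinate derivatives vanish, giving $\nabla\omega=0$ on $U$. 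This reproduces on the presymplectic side exactly the argument already sketched in the excerpt for the symplectic case, now applied to the canonical form with constant coefficients furnished by Theorem~\ref{thm-darboux-presymplectic}.

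There is essentially no obstacle here: the content is entirely contained in the already-proved Darboux theorem, and the remaining verifications are the routine facts that a coordinate connection with vanishing Christoffel symbols is flat, torsion-free, and parallelises any tensor with constant coordinate components. The only point worth a word of care is locality --- such a $\nabla$ need not exist globally, since flat connections are obstructed by the topology of $M$ (as discussed above for $\S^2$) --- which is why the statement, like the symplectic analogue, asserts only the existence of \emph{locally defined} flat presymplectic connections. One may also remark that the converse implication (a flat presymplectic connection yields Darboux coordinates) follows by the same parallel-transport argument used earlier for symplectic forms, so the existence of local flat presymplectic connections is in fact equivalent to the Darboux theorem for presymplectic manifolds.
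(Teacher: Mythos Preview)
Your proof is correct and follows exactly the approach the paper takes: the paper states just before the lemma that the Darboux theorem for presymplectic forms implies the local existence of a flat presymplectic connection (by taking the connection with vanishing Christoffel symbols in Darboux coordinates), and your proposal simply spells out the routine verifications of flatness, torsion-freeness, and $\nabla\omega=0$ that the paper leaves implicit.
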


At this point, it becomes clear that if a differential form admits a compatible flat torsion-less connection, it must be closed. Hence, no flat torsion-less compatible connection exist for contact forms, locally conformally symplectic forms, and other differential forms that are not closed \cite{Gei_08,Vai_85}. We have stressed the word ``torsion-less" unless every connection in this work is assumed to be so, because in the conclusions of this work will show that removing this condition may lead to deal with no-closed differential forms.


\subsection{Cosymplectic and precosymplectic manifolds}
Let us review the definition of cosymplectic \cite{Alb_89,CLL_92,LR_89} and precosymplectic \cite{CLM_94} manifolds, their corresponding Darboux theorems, and their relations to flat cosymplectic and precosymplectic connections.

\begin{definition}\label{dfn:cosymp-manifold}
A {\it cosymplectic structure} in $M$ is a pair
    $(\omega,\eta)$, where $\omega\in\Omega^2(M)$ and $\eta\in\Omega^1(M)$ are closed differential forms such that $\eta$ does not vanish and $\ker \eta\oplus \ker\omega = \T M$. The triple $(M,\omega,\eta)$ is said to be a {\it cosymplectic manifold}.
\end{definition}

Note that a cosymplectic structure on $M$ implies that $M$ is odd-dimensional. The fact that  $\eta$ is non-vanishing implies that $\langle\eta\rangle \oplus\Ima\omega = \cT M$ and $\dim M = 2n+1$ for $n\geq 0$. Then, $(M,\omega,\eta)$ is a cosymplectic manifold if and only if $\eta\wedge\omega^n$ is a volume form on $M$, where we assume that $\omega^0=1$. In particular, a cosymplectic manifold $(M,\omega,\eta)$ yields a presymplectic manifold $(M,\omega)$.  Note that the case $\dim M=1$ may give rise to a cosymplectic manifold according to our definition \cite{THB_22}. 

The \textit{characteristic distribution} of a cosymplectic manifold $(M,\omega,\eta)$ is the rank one distribution given by $\C_\omega = \ker\omega$, and it is often called the \textit{Reeb distribution}. The following proposition states that $(M,\omega,\eta)$ induces a unique distinguished vector field, called \textit{Reeb vector field}, taking values in $\ker\omega$.

\begin{proposition}
    Given a cosymplectic manifold $(M, \omega,\eta)$, there exists a unique vector field $R\in\X(M)$ that satisfies
    \begin{equation*}
        \inn_R\eta = 1\,,\quad \inn_R\omega = 0\,.
    \end{equation*}
\end{proposition}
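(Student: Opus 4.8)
The plan is to use the defining direct-sum decomposition $\ker\eta\oplus\ker\omega=\T M$ to build $R$ pointwise and then check it is the unique smooth vector field with the required properties. First I would note that at each $x\in M$ the subspace $(\ker\omega)_x$ is one-dimensional: indeed $\eta_x\neq 0$ forces $(\ker\eta)_x$ to have codimension one, so the complementary summand $(\ker\omega)_x$ has dimension one. Pick any $0\neq v_x\in(\ker\omega)_x$; then $\eta_x(v_x)\neq 0$, for otherwise $v_x\in(\ker\eta)_x\cap(\ker\omega)_x=\{0\}$, contradicting the directness of the sum. Hence $R_x:=v_x/\eta_x(v_x)$ is the unique vector in $(\ker\omega)_x$ with $\eta_x(R_x)=1$, which establishes both existence and uniqueness fibrewise, and simultaneously shows that any vector field $R$ meeting $\inn_R\eta=1$ and $\inn_R\omega=0$ must coincide with this pointwise prescription.

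Next I would verify smoothness. The map $\flat\colon\X(M)\to\Omega^1(M)$, $X\mapsto\inn_X\omega$, is a smooth vector-bundle morphism, and $\C_\omega=\ker\omega=\ker\flat$ is a smooth regular rank-one distribution because $\omega$ has constant rank (as observed in the excerpt for presymplectic manifolds, of which $(M,\omega)$ is an instance). Therefore, on a neighbourhood $U$ of any point one can choose a nowhere-vanishing local section $Z\in\X(U)$ spanning $\C_\omega$. The function $\inn_Z\eta=\eta(Z)\in\Cinfty(U)$ is nowhere zero by the argument of the previous paragraph, so $R:=Z/\eta(Z)$ is a smooth local vector field on $U$ satisfying $\inn_R\omega=0$ and $\inn_R\eta=1$. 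Since the pointwise characterisation forces any two such local vector fields to agree on overlaps, these local definitions glue to a single globally defined smooth $R\in\X(M)$, and global uniqueness follows from the pointwise uniqueness already established.

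I do not expect a serious obstacle here; the statement is essentially a linear-algebra fact fibred over $M$, and the only thing requiring a little care is promoting the pointwise construction to a smooth one, which is handled by the constant-rank hypothesis on $\omega$ guaranteeing a smooth local spanning section of $\ker\omega$. An alternative, equally short route is to use the isomorphism-type argument: the bundle map $\T M\to\cT M$ sending $X\mapsto \inn_X\omega+\eta(X)\,\eta$ is injective (if $\inn_X\omega=0$ and $\eta(X)=0$ then $X\in\ker\omega\cap\ker\eta=0$) hence an isomorphism by dimension count, and $R$ is defined as the preimage of $\eta$; smoothness is then immediate from smoothness of the inverse bundle map, and uniqueness from injectivity. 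Either formulation yields the result in a few lines.
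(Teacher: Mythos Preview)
Your proposal is correct. The paper does not include a formal proof environment for this proposition; instead, immediately after stating it, the paper observes that the map $\flat(X)=\inn_X\omega+(\inn_X\eta)\eta$ is a $\Cinfty(M)$-module isomorphism and sets $R=\sharp\eta$, which is precisely the alternative route you sketch at the end. Your primary argument via the rank-one distribution $\ker\omega$ and a local nowhere-vanishing section is a perfectly valid, slightly more hands-on variant that makes the smoothness step explicit, whereas the paper's isomorphism formulation packages existence, uniqueness, and smoothness into the invertibility of a single bundle map.
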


A cosymplectic manifold $(M,\omega,\eta)$ induces a $\Cinfty(M)$-module isomorphism $\flat\colon\mathfrak{X}(M)\to\Omega^1(M)$ given by $\flat(X) = \inn_X\omega + (\inn_X\eta)\eta$, whose inverse map is denoted by $\sharp = \flat^{-1}$. Then, the Reeb vector field $R$ reads
\begin{equation*}
    R = \sharp\eta\,.
\end{equation*}

Consider the product manifold $\R\times \cT Q$ and the projections $\pi_1\colon\R\times \cT Q\to \R$ and $\pi_2\colon\R\times\cT Q\to \cT Q$ onto the first and second manifolds in $\R\times \cT Q$. If $t$ is the natural coordinate in $\R$ and $\omega_Q$ is the canonical symplectic form on $\T^*Q$, then the triple
\begin{equation} \label{eq:cosymp-canonical-model}
(\R\times \T^\ast Q,\pi_2^\ast\omega_Q,\pi_1^\ast\d t)    
\end{equation}
is a cosymplectic manifold. 
Let us consider the pull-back of $t$ to $\R\times \cT Q$ via $\pi_1$, and the pull-back of some Darboux coordinates $\{q^i,p_i\}$ for $\omega_Q$ to $\R\times \cT Q$ via $\pi_2$. Let us denote such pull-backs in the same way as the original coordinates to simplify the notation. 
Then, in the coordinates $\{t, q^i, p_i\}$, the Reeb vector field of  $(\R\times \cT Q,\pi_2^\ast\omega_Q,\pi_1^\ast\d t)$ read $\partial/\partial t$. Locally, $\pi_2^*\omega_Q=\d q^i\wedge \d p_i$ and $\pi_1^*\d t = \d t$. 
		
\begin{theorem}[Cosymplectic Darboux theorem \cite{LR_89}]\label{thm-darboux-cosymplectic}
    Given a cosymplectic manifold $(M, \omega, \eta)$, there exists, around each point $x\in M$, 
    local coordinates $\{t, q^i, p_i\}$, 
    where $1\leq i\leq n$, such that
    \begin{equation*}
        \eta = \d t \,,\quad \omega = \d q^i\wedge\d p_i \,.
    \end{equation*}
\end{theorem}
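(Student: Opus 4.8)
The strategy is to reduce the cosymplectic Darboux theorem to the already-established presymplectic Darboux theorem (Theorem \ref{thm-darboux-presymplectic}) applied to $(M,\omega)$, and then to adjust the resulting chart so that the remaining coordinate can be taken to be the potential of $\eta$. First I would observe that $(M,\omega)$ is a presymplectic manifold of rank $2n$ with $\dim M = 2n+1$, since $\eta\wedge\omega^n$ is a volume form forces $\mathrm{rank}\,\omega = 2n$ and $\C_\omega = \ker\omega$ is one-dimensional. Applying Theorem \ref{thm-darboux-presymplectic} around $x$ gives coordinates $\{q^i,p_i,z\}$ (here $d=1$) with $\omega = \sum_{i=1}^n \d q^i\wedge\d p_i$, and $\partial/\partial z$ spans $\C_\omega$ at each point of the chart.

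Next I would analyse $\eta$ in these coordinates. Since $\eta$ is closed, locally $\eta = \d f$ for some function $f$ on a (possibly smaller) neighbourhood of $x$, by the Poincaré lemma. The condition $\ker\eta\oplus\ker\omega = \T M$ means that $\eta$ does not annihilate $\C_\omega = \langle\partial/\partial z\rangle$, i.e. $\inn_{\partial/\partial z}\eta = \partial f/\partial z \neq 0$ throughout the chart. I would then set $t := f$ and check that $\{q^1,\dots,q^n,p_1,\dots,p_n,t\}$ is a valid coordinate system near $x$: the Jacobian of the change of coordinates from $\{q^i,p_i,z\}$ is block-triangular with the $\{q^i,p_i\}$ part being the identity and the last row containing $\partial f/\partial z\neq 0$ in the $z$-slot, so the change of variables is a local diffeomorphism. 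In these new coordinates, $\eta = \d t$ by construction, while $\omega$ retains the form $\sum_i \d q^i\wedge\d p_i$ because $\omega$ does not involve $\d z$ (equivalently $\inn_{\partial/\partial z}\omega = 0$), hence replacing $z$ by $t = f(q,p,z)$ does not alter the expression of $\omega$; this last point deserves a one-line justification, namely that $\d q^i, \d p_i$ are unchanged and $\omega$ has no $\d z$ component, so it is expressed identically in the new chart.

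The main obstacle, and the step requiring the most care, is ensuring the compatibility of the presymplectic Darboux chart with the new time coordinate: one must verify that after Theorem \ref{thm-darboux-presymplectic} one genuinely has $\inn_{\partial/\partial z}\omega = 0$ (so that $\partial/\partial z$ spans the characteristic distribution, which is exactly how $z$ arises as the transverse coordinate), and that $\partial f/\partial z$ is nowhere zero so that the coordinate change is admissible. Both follow from the cosymplectic hypothesis $\ker\eta\oplus\ker\omega=\T M$, but writing it out cleanly is where the real content lies; the rest is bookkeeping on the Jacobian. One could alternatively bypass the potential $f$ by first rectifying the Reeb vector field $R$ (which exists and spans $\C_\omega$) via the straightening theorem so that $R=\partial/\partial z$, then noting $\Lie_R\omega = \d\inn_R\omega + \inn_R\d\omega = 0$ and $\Lie_R\eta = 0$, $\inn_R\eta = 1$, which forces $\eta = \d z + (\text{closed form pulled back from the }\{q,p\}\text{-slice})$ and then absorbing that correction into a redefinition of $z$; but the potential-function route above is shortest and I would present that one.
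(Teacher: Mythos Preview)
Your proposal is correct and follows essentially the same route as the paper: apply the presymplectic Darboux theorem to $(M,\omega)$ to obtain $\{q^i,p_i,z\}$ with $\omega=\d q^i\wedge\d p_i$, take a local potential $t$ of the closed one-form $\eta$, and replace $z$ by $t$. The only cosmetic difference is in the last step: where you compute the Jacobian and invoke $\partial f/\partial z\neq 0$, the paper simply notes that $\eta\wedge\omega^n=\d t\wedge\d q^1\wedge\d p_1\wedge\cdots\wedge\d q^n\wedge\d p_n$ is a volume form, so $\{t,q^i,p_i\}$ have linearly independent differentials and hence form a chart; this is slightly slicker but equivalent to your argument.
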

\begin{proof}
    Since $(M,\omega)$ is a presymplectic manifold and $\omega$ has corank one, there exist for any point $x\in M$ a neighbourhood $U$ of $x$ with coordinates $\{s,q^i,p_i\}$, with $i=1,\ldots,n$, so that $\omega=\d q^i\wedge \d p_i$. Consider now a potential function of $\eta$, which exists because $\eta$ is closed, and denote it by $t$. Since $\eta\wedge\omega^n$ is a volume form, $\{t,q^i,p_i\}$ is a coordinate system around $x$ and $\eta = \d t$ and $\omega = \d q^i\wedge \d p_i$.
\end{proof}

The Darboux theorem for cosymplectic manifolds states that every cosymplectic manifold is locally diffeomorphic to the canonical model \eqref{eq:cosymp-canonical-model} (see \cite{CLL_92,LR_89}). In Darboux coordinates, the Reeb vector field $R$ for a cosymplectic manifold $(M,\omega,\eta)$ is written as $R = \parder{}{t}$.

The Darboux theorem for cosymplectic structures implies that there exists, around each point, a flat  connection $\nabla$ such that $\nabla\eta = 0$ and $\nabla\omega = 0$. Indeed, $\nabla$ can be chosen to be the connection with zero Christoffel symbols relative to some Darboux coordinates. This justifies the following definition.
\begin{definition} A {\it cosymplectic connection} relative to $(M,\omega,\eta)$ is a connection on $M$ such that $\nabla\eta = 0$ and $\nabla\omega = 0$. 
\end{definition}

Let us show that the existence of flat cosymplectic connections allows us to prove the Darboux theorem for $(M,\omega,\eta)$. At a point $x\in M$, the fact that $\ker \eta_x\oplus \ker \omega_x=\T_xM$ implies that there exists a basis of $\T_xM$ of the form $\{e_1,\ldots, e_{2n+1}\}$ so that $\eta_x=e^{2n+1}$ and $\omega_x=\sum_{i=1}^{n}e^{2i-1}\wedge e^{2i}$ relative to the dual basis $\{e^1,\ldots,e^{2n+1}\}$ in $\cT_x M$. Due to the fact that $\nabla$ is flat, there exists a family of 
 commuting parallel vector fields $X_1,\ldots,X_{2n+1}$ such that $X_i(x)=e_i$ for $i=1,\ldots,2n+1$. Since
$$
\nabla_{X_i}[\eta(X_j)]=0\,,\qquad \nabla_{X_i}[\omega(X_j,X_k)]=0\,,\qquad i,j,k=1,\ldots, 2n+1\,,
$$
the dual basis of differential one-forms $\tau^1,\ldots,\tau^{2n+1}$ to $X_1,\ldots,X_{2n+1}$ is such that
$$
\eta=\tau^{2n+1},\qquad \omega=\sum_{i=1}^n\tau^{2i-1}\wedge \tau^{2i}\,.
$$
Since $X_1,\ldots,X_{2n+1}$ admit a coordinate system so that $X_i=\partial/\partial x_i$, with $i=1,\ldots,2n+1$, then $\tau^{i}=\d x^{i}$ for $i=1,\ldots,2n+1$, and the Darboux theorem for cosymplectic manifolds follows. Note that this is due to the fact that the connection is assumed to be torsion-free.

Cosymplectic manifolds can be generalised by assuming that $\eta\in\Omega^1(M)$ and $\omega\in\Omega^2(M)$ are closed forms on $M$, but $\ker\eta\cap\ker\omega$ is a distribution of fixed rank that is not necessarily zero. This implies that $\omega$ is a presymplectic form on $M$. This gives rise to the definition of a precosymplectic manifold. When $\ker \eta\cap \ker \omega=\{ 0\}$, one retrieves the definition of a cosymplectic manifold. 
\begin{definition}
  A {\it precosymplectic structure} in $M$ is a pair $(\omega,\eta)$, where $\omega\in\Omega^2(M)$ and  $\eta\in\Omega^1(M)$ are closed differential forms such that $\ker \eta\cap\ker \omega$ is a regular distribution strictly included in $\ker \omega$ at every $x\in M$. 
 If $\rk\omega = 2r<\dim M$, the triple $(M,\omega,\eta)$ is said to be a {\it precosymplectic manifold} of rank $2r$.
\end{definition}

It is worth stressing that the fact that $\ker \eta\cap \ker \omega$ is a regular distribution strictly contained in $\ker\omega$ implies that 
    $\eta\wedge\omega^r$ is a non-vanishing form and $\omega^{r+1} = 0$ for a certain fixed $r$, and conversely. Therefore, $\omega$ has constant rank $2r$, with $2r <  \dim M$.
\begin{remark}\label{rmrk:R-times-presymplectic}
    Let $(P,\omega)$ be a presymplectic manifold with Darboux coordinates $\{q^i,p_i,z^j\}$. Consider the manifold $\R\times P$ with the induced coordinates $\{t,q^i,p_i,z^j\}$ obtained as usual, namely, $q^i,p_i,z^j$ are the pull-back to $\mathbb{R}\times P$ of the chosen variables in $P$. Then, $(\R\times P,\pi_2^*\omega ,\pi^*_1\d t)$ is a precosymplectic manifold. In the obtained local coordinates, $\pi^*_2\omega=\d q^i\wedge \d p_i$ while $\pi^*_1\d t$  is denoted by $\d t$ to simplify the notation.
\end{remark}

\begin{remark}
    Consider the regular distribution $D=\ker\omega\cap\ker\eta$ of a precosymplectic manifold $(M,\omega,\eta)$. Then, $D$ is involutive because $\ker\omega$ and $\ker \eta$ are so. The foliation associated with $D$ defines a local projection
    \begin{equation}\label{precosymplectic-fibration}
        \pi\colon M\to \widetilde M = M / (\ker\omega\cap\ker\eta)\,,
    \end{equation}
where $\widetilde{M}$ is the quotient manifold of the leaves of $D$. Recall that we are assuming that $\widetilde{M}$ is a manifold for simplicity. Indeed, one of the general assumptions of our paper is that manifold structures and other existing mathematical local structures are defined globally. In reality, one can only ensure that for every $x\in M$ and a local neighbourhood $U_x$ of $x$, the space $M/(\ker\omega\cap\ker\eta)$ is a manifold. Hence, by our general assumptions, there exists a unique cosymplectic structure $(\widetilde\omega,\widetilde\eta)$ on $\widetilde M$ such that $\pi^\ast\widetilde\omega = \omega$ and $\pi^\ast\widetilde\eta = \eta$. 
\end{remark}

As in the case of cosymplectic manifolds, we can define special types of vector fields for precosymplectic manifolds.

\begin{definition}\label{dfn-precosymplectic-reeb}
    Given a precosymplectic manifold $(M,\omega,\eta)$, a vector field $X\in\mathfrak{X}(M)$ satisfying
    \begin{equation*}
        \inn_X\omega = 0\,,\quad \inn_X\eta = 1\,,
    \end{equation*}
    is called a {\it Reeb vector field}. The space generated by Reeb vector fields, namely $\ker \omega$, is called the {\it Reeb distribution} of $(M,\omega,\eta)$.
\end{definition}

Note that, if $R\in\X(M)$ is a Reeb vector field, then $R+Y$ is also a Reeb vector field for every $Y \in\ker\omega\cap\ker\eta$. In other words, Reeb vector fields for precosymplectic manifolds need not be univocally defined.



Finally, let us state the Darboux theorem for precosymplectic manifolds, whose proof seems, as far as we know, to be absent in the literature. Nevertheless, it is always implicitly assumed that it holds \cite{CLM_94,LR_89} and it is quite straightforward. 
	
\begin{theorem}[Darboux Theorem for precosymplectic manifolds]\label{thm-darboux-precosymplectic}
    Let $(M,\omega,\eta)$ be a precosymplectic manifold with $\rk\omega=2r\leq \dim M-1$. 
    For every $x\in M$, there exist local coordinates $\{t, q^i, p_i, z^j\}$ around $x$, where $1\leq i\leq r$ and $1\leq j\leq \dim M - 1 - 2r$, such that
    \begin{equation}\label{Eq:DarbouxPreCo}
        \eta = \d t\,,\quad \omega = \sum_{i=1}^r\d q^i\wedge\d p_i \,.
    \end{equation}
\end{theorem}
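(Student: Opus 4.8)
The plan is to mimic the proof of the cosymplectic Darboux theorem (Theorem \ref{thm-darboux-cosymplectic}), replacing the symplectic ingredient by its presymplectic counterpart (Theorem \ref{thm-darboux-presymplectic}). First I would work near a fixed $x\in M$ and exploit that $\omega$ is a presymplectic form of constant rank $2r$ on $M$. By the presymplectic Darboux theorem, there is a neighbourhood $U$ of $x$ with coordinates $\{s,q^1,\dots,q^r,p_1,\dots,p_r,w^1,\dots,w^m\}$, where $m=\dim M-1-2r$, in which $\omega=\sum_{i=1}^r \d q^i\wedge\d p_i$; here I have singled out one of the ``kernel'' coordinates and called it $s$ so that $\partial/\partial s,\partial/\partial w^1,\dots,\partial/\partial w^m$ span $\ker\omega=\C_\omega$.

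Next I would bring in $\eta$. Since $\eta$ is closed, shrinking $U$ if necessary, there is a function $t$ on $U$ with $\d t=\eta$. The key point is to check that $t$ can be taken as a coordinate that, together with a suitable choice of the remaining $m$ kernel coordinates, completes $\{q^i,p_i\}$ to a chart. Because $\eta\wedge\omega^r$ is a non-vanishing form (equivalently $\ker\omega\cap\ker\eta$ is a regular distribution strictly contained in $\ker\omega$), the restriction $\restr{\eta}{\C_\omega}$ is a nowhere-zero one-form on the rank $m+1$ distribution $\C_\omega$; hence $\d t$ is non-zero on $\C_\omega$, so $t$ is functionally independent from $q^1,\dots,q^r,p_1,\dots,p_r$ (whose differentials already annihilate $\C_\omega$ is false — rather, $\d q^i,\d p_i$ restricted to $\C_\omega$ may be nonzero, so here I must argue more carefully: I would instead observe that $\d t,\d q^1,\dots,\d q^r,\d p_1,\dots,\d p_r$ are linearly independent at $x$ because $\omega_x$ has rank $2r$ on a complement of $\ker\eta_x$ forced by $\eta_x\wedge\omega_x^r\neq 0$). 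Concretely: evaluate $\eta\wedge\omega^r = \eta\wedge r!\,\d q^1\wedge\d p_1\wedge\cdots\wedge\d q^r\wedge\d p_r$; non-vanishing of this $(2r+1)$-form forces $\eta$ to have a component transverse to $\mathrm{span}\{\d q^i,\d p_i\}$, i.e. $\d t,\d q^1,\dots,\d p_r$ are pointwise independent near $x$.

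Then I would complete the chart: choose $m=\dim M-1-2r$ further functions $z^1,\dots,z^m$ on $U$ (for instance a suitable subset of the old $s,w^1,\dots,w^m$) so that $\{t,q^1,\dots,q^r,p_1,\dots,p_r,z^1,\dots,z^m\}$ are functionally independent at $x$, hence a coordinate system on a possibly smaller neighbourhood. In these coordinates $\eta=\d t$ by construction, and $\omega=\sum_{i=1}^r\d q^i\wedge\d p_i$ still holds because $\omega$ was already in that form and the change of the remaining coordinates does not affect it — this is automatic since $\omega$ does not involve $s,w^j$ at all. This gives \eqref{Eq:DarbouxPreCo}.

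The main obstacle, and the only place needing genuine care, is the independence argument of the previous paragraph: one must ensure that the potential $t$ of $\eta$ is not a function of $q^i,p_i$ alone, so that it genuinely enlarges the chart, and that the $z^j$ can be chosen compatibly; this is exactly where the structural hypothesis that $\ker\omega\cap\ker\eta$ is strictly contained in $\ker\omega$ (equivalently $\eta\wedge\omega^r\neq0$, $\omega^{r+1}=0$) is used. An alternative, perhaps cleaner, route that avoids this bookkeeping is to pass to the quotient: let $\pi\colon U\to\widetilde U=U/(\ker\omega\cap\ker\eta)$ be the local projection of Remark \ref{rmrk:R-times-presymplectic} onwards, so that $(\widetilde U,\widetilde\omega,\widetilde\eta)$ is a genuine cosymplectic manifold with $\pi^*\widetilde\omega=\omega$, $\pi^*\widetilde\eta=\eta$; apply Theorem \ref{thm-darboux-cosymplectic} to get cosymplectic Darboux coordinates $\{\bar t,\bar q^i,\bar p_i\}$ on $\widetilde U$, pull them back by $\pi$, and adjoin $\dim M-1-2r$ functions $z^j$ that are functionally independent of the pulled-back coordinates (these exist because $\pi$ is a submersion with fibres of dimension $\dim M-1-2r$). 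Setting $t=\bar t\circ\pi$, $q^i=\bar q^i\circ\pi$, $p_i=\bar p_i\circ\pi$ yields a chart of $M$ with $\eta=\d t$ and $\omega=\sum_{i=1}^r\d q^i\wedge\d p_i$, completing the proof.
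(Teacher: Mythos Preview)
Your main argument is correct and follows essentially the same route as the paper: apply the presymplectic Darboux theorem to $\omega$, take a local potential $t$ of $\eta$, use $\eta\wedge\omega^r\neq 0$ to conclude that $\{t,q^i,p_i\}$ are functionally independent, and then complete to a chart with extra coordinates $z^j$. Your self-correction about the independence argument lands exactly where the paper does, namely on the non-vanishing of $\eta\wedge\omega^r=r!\,\d t\wedge\d q^1\wedge\d p_1\wedge\cdots\wedge\d q^r\wedge\d p_r$; the earlier digression about $\d q^i,\d p_i$ restricted to $\C_\omega$ is unnecessary and can simply be dropped.

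Your alternative quotient route (project to $\widetilde U=U/(\ker\omega\cap\ker\eta)$, apply the cosymplectic Darboux theorem, pull back, and adjoin fibre coordinates) is also valid and is genuinely different from the paper's proof; it parallels the way Theorem~\ref{thm-darboux-presymplectic} is proved rather than Theorem~\ref{thm-darboux-cosymplectic}. It has the virtue of making the cosymplectic case do all the work, at the cost of invoking the local quotient construction; the paper's direct argument is slightly more elementary since it only needs the presymplectic Darboux theorem and the single algebraic fact $\eta\wedge\omega^r\neq 0$.
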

\begin{proof}
    Since $\omega$ is a presymplectic form, there exist coordinates $\{q^i,p_i,z'_i\}$ on a neighbourhood $U$ of $x$ such that $\omega =\sum_{i=1}^r \d q^i\wedge \d p_i$. Since $(\ker\eta\cap \ker\omega )^
    \circ=\langle \eta\rangle \oplus {\rm Im}\,\omega$ and $\eta$ does not vanish, one has that $\rank \omega\leq \dim M-1$. One the other hand, $\eta$ is closed and, therefore, there exists a function $t$ on $U$, where $U$ can be chosen smaller if necessary, such that $\eta = \d t$ and $\omega=\d q^i\wedge \d p_i$. Since $\eta\wedge \omega^r$ does not vanish, $\{t,q^i,p_i\}$ are functionally independent functions. Finally, one can choose additional functionally independent coordinates, $z^1,\dotsc,z^n$ with respect to $\{t,q^i,p_i\}$ and  \eqref{Eq:DarbouxPreCo} will hold.
\end{proof}

As in the previous cases, there exists a  locally defined flat connection $\nabla$ whose Christoffel symbols vanish on the chosen Darboux coordinates. Then, 
 $\eta$ and $\omega$ become parallel differential forms relative to $\nabla$. This motivates the following natural definition.

\begin{definition} A {\it precosymplectic connection}  relative to a precosymplectic manifold $(M,\eta,\omega)$ is a connection on $M$ such that $\nabla\eta=0$ and $\nabla\omega=0$.
\end{definition}

Note that, as previously, the existence of a flat precosymplectic connection allows one to provide a brief proof of the Darboux theorem for precosymplectic manifolds.

\section{\texorpdfstring{$k$}--symplectic and \texorpdfstring{$k$}--presymplectic manifolds}\label{sec3:k-symp}

Let us introduce and provide Darboux theorems for $k$-symplectic manifolds. This will give a new, complementary approach, to the classical results \cite{Awa_92,CB_08} and some new more modern approaches \cite{FG_13}. Moreover, we will discuss the existence of Darboux theorems for $k$-presymplectic manifolds. 
Furthermore, this will be done by providing new simpler, shorter and more geometrical proofs of Darboux theorems for $k$-symplectic manifolds while giving more details and, as far as we know, a new Darboux theorem for linear spaces \cite{Awa_92}. Additionally, we will give a new proof about the existence of a complementary for a polarisation that is isotropic relative to the differential two-forms of a $k$-symplectic structure.   
On the other hand, Darboux theorems give rise to the hereafter called {\it flat $k$-symplectic and $k$-presymplectic connections}, 
which, in turn, lead to other proofs of respective Darboux theorems. It is worth noting that an alternative, somehow different, development of these ideas for the $k$-symplectic case can be found in \cite{CB_08}. Moreover, some new structures will arise in our approach and our results concerning $k$-presymplectic manifolds seem to be absolutely new.

\begin{definition}
\label{dfn:k-symplectic-manifold}
Let $M$ be an $n(k{+}1)$-dimensional manifold. 
A {\it$k$-symplectic structure} on $M$ is a family 
$(\omega^1,\dotsc, \omega^k,V)$, 
where $V$ is an  integrable distribution on~$M$ of rank $nk$, and
$\omega^1,\dotsc,\omega^k$ are closed differential $2$-forms on~$M$ satisfying that
\begin{enumerate}[(1)]
\item 
$\restr{\omega^\alpha}{V\times V}=0$,
for $1\leq \alpha\leq k$,
\item 
$\bigcap_{\alpha=1}^k \ker \omega^\alpha = \{0\}$.
\end{enumerate}
Under the above hypotheses, $(M,\omega^1,\dotsc,\omega^k,V)$ is called a {\it$k$-symplectic manifold}. We call $V$ a {\it polarisation} of the $k$-symplectic manifold. 
\end{definition}

Our notion of $k$-symplectic manifold matches the one given by A.~Awane \cite{Awa_92, AG_00}. 
Moreover, it is equivalent to the concepts of 
\textit{standard polysymplectic structure} of C.~Günther \cite{Gun_87} 
and 
\textit{integrable $p$-almost cotangent structure} introduced by M.~de León \textit{et al} \cite{LMS_88, LMS_88a}. 
In the case $k=1$, Awane's definition reduces to the notion of \textit{polarised symplectic manifold}, 
that is a symplectic manifold with a Lagrangian foliation. 
We will illustrate in forthcoming examples that the distribution~$V$ is needed to ensure the existence of a particular type of Darboux coordinates.

In fact, G\"unther calls polysymplectic manifolds the differential geometric structures obtained from our definition by removing the existence of the distribution $V$. 
Meanwhile, a \emph{standard} polysymplectic manifold in G\"unther's paper is a polysymplectic manifold admitting an atlas of Darboux coordinates. Note that a polysymplectic manifold may have atlas of Darboux coordinates without a distribution $V$. In a particular case, if we think of symplectic manifolds as a one-symplectic manifold, then it is clear that it has local Darboux coordinates, but the standard symplectic manifold on the sphere does not have a polarisation \cite{Awa_20}.  Then, G\"unther's definition is more general than ours, while it is equivalent to our definition if the compatibility of two charts Darboux coordinates $\{y^i,p^\alpha_i\}$ and $\{x^i,\pi^\alpha_i\}$ involves that $x=x(y)$ and the momenta are transformed accordingly, namely $\pi^\alpha=\pi^\alpha(y,p)$ are the momenta of the $\{x^i\}$. Otherwise, the equivalence is only local.

Let us provide a Darboux theorem at the tangent space of a point of a $k$-symplectic manifold. Since every $k$-symplectic manifold $(M,\omega^1,\dotsc,\omega^k,V)$ induces at every $\T_xM$ for $x\in M$ a so-called $k$-{\it symplectic vector space}, Theorem \ref{thm:k-symp-puntual} can be understood as a Darboux theorem for $k$-symplectic vector spaces.

\begin{theorem} {\bf ($k$-symplectic linear Darboux theorem)} \label{thm:k-symp-puntual}
Assume that $(M,\omega^1,\ldots,\omega^k,V)$ 
is a $k$-symplectic manifold.
For every $x\in M$, 
there exists a basis 
$\{e^1,\ldots,e^n;e_1^\beta,\ldots,e_n^\beta\}_{\beta=1,\ldots,k}$  of $\cT_xM$ such that 
$$
\omega^\beta=\sum_{i=1}^ne^i\wedge e_i^\beta\,,\qquad V=\bigoplus_{\alpha=1}^kV_\alpha,\qquad  V_\beta=\langle e^1_\beta,\ldots, e^n_\beta\rangle,\qquad \beta=1,\ldots,k\,.
$$
Note that $\{e_1,\ldots,e_n,e_\beta^1,\ldots,e_\beta^n\}$ is the dual basis in $\T_xM$.
\end{theorem}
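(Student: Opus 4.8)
The plan is to treat this purely as a statement of linear algebra at the fixed point $x$: the data is the vector space $E:=\T_xM$ of dimension $n(k{+}1)$, the subspace $V_x\subset E$ of dimension $nk$, and the skew-symmetric forms $\omega^1_x,\dots,\omega^k_x$ on $E$ satisfying $\omega^\beta_x|_{V_x\times V_x}=0$ (so $V_x$ is isotropic for every $\omega^\beta_x$) and $\bigcap_{\beta}\ker\omega^\beta_x=\{0\}$. Integrability of $V$ plays no role at this stage; it enters only in the manifold version. The goal is to exhibit a basis of $E$ in which the data takes the canonical form.

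The main step, and the one I expect to be the real obstacle, is to produce an $n$-dimensional subspace $W\subset E$ that is complementary to $V_x$ and \emph{simultaneously isotropic} for all $\omega^\beta_x$. I would start from an arbitrary complement $W_0$ of $V_x$ and look for $W$ as the graph of a linear map $f\colon W_0\to V_x$. The key auxiliary fact is that the pairings $b^\beta\colon W_0\times V_x\to\R$, $b^\beta(w,v)=\omega^\beta_x(w,v)$, assemble into a map $B=(B^1,\dots,B^k)\colon V_x\to (W_0^\ast)^k$ that is an isomorphism: if $B^\beta(v)=0$ for all $\beta$, then $\omega^\beta_x(w,v)=0$ for all $w\in W_0$, and since $\omega^\beta_x(v',v)=0$ for $v'\in V_x$ by isotropy, we get $v\in\bigcap_\beta\ker\omega^\beta_x=\{0\}$; as $\dim V_x=nk=\dim(W_0^\ast)^k$ the map $B$ is bijective. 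Writing out the isotropy condition for $\graph f$ and using that $V_x$ is isotropic, the requirement on $f$ becomes $\omega^\beta_x(w,w')+b^\beta(w,f(w'))-b^\beta(w',f(w))=0$ for all $w,w'\in W_0$; splitting the bilinear form $(w,w')\mapsto b^\beta(w,f(w'))$ into symmetric and skew parts shows the equation only constrains the skew part, and it is solved by taking $B^\beta\circ f=\tfrac12\,\omega^\beta_x|_{W_0\times W_0}$ (the right-hand side viewed as a map $W_0\to W_0^\ast$). Thus $W:=\graph f$ is the desired isotropic complement, which also recovers the ``isotropic complement of a polarisation'' statement advertised earlier.

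Once $W$ is fixed, the rest is bookkeeping. With $E=W\oplus V_x$ and both summands isotropic, each $\omega^\beta_x$ is completely determined by the pairing $b^\beta\colon W\times V_x\to\R$, i.e. by $B^\beta\colon V_x\to W^\ast$, and the same argument as above gives that $B=(B^1,\dots,B^k)\colon V_x\to(W^\ast)^k$ is an isomorphism. Define $V_\beta:=B^{-1}\bigl(0\oplus\cdots\oplus W^\ast\oplus\cdots\oplus 0\bigr)$ (nonzero slot in position $\beta$); then $V_x=\bigoplus_{\beta=1}^k V_\beta$, the restriction $B^\beta|_{V_\beta}\colon V_\beta\to W^\ast$ is an isomorphism, and $\omega^\gamma_x(w,v)=0$ whenever $v\in V_\beta$ with $\gamma\neq\beta$.

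Finally, choose a basis $\{e_1,\dots,e_n\}$ of $W$ with dual basis $\{e^1,\dots,e^n\}$ of $W^\ast$, and for each $\beta$ let $e_\beta^i\in V_\beta$ be the unique vector with $B^\beta(e_\beta^i)=e^i$, so that $\omega^\beta_x(e_j,e_\beta^i)=\delta^i_j$. Then $\{e_1,\dots,e_n;e_\beta^1,\dots,e_\beta^n\}_{\beta=1,\dots,k}$ is a basis of $\T_xM$, its dual basis $\{e^1,\dots,e^n;e_1^\beta,\dots,e_n^\beta\}_{\beta}$ of $\cT_xM$ satisfies $\omega^\beta_x=\sum_{i=1}^n e^i\wedge e_i^\beta$ — checked by evaluating both sides on every pair of basis vectors, using isotropy of $W$, isotropy of $V_x$, and the cross-vanishing from the previous step — and $V_\beta=\langle e_\beta^1,\dots,e_\beta^n\rangle$ with $V=\bigoplus_\beta V_\beta$ by construction. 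This is precisely the asserted canonical form.
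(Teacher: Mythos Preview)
Your proof is correct and takes a genuinely different route from the paper's. The paper fixes an \emph{arbitrary} complement $W$ to $V_x$, builds a basis in which $\omega^\beta = e^i\wedge f_i^\beta + c_{ij}^\beta\, e^i\wedge e^j$, and then absorbs the unwanted $c_{ij}^\beta$-terms by shifting the covectors $f_i^\beta \mapsto e_i^\beta = f_i^\beta + c_{ij}^\beta e^j$; you instead shift the complement first, producing a $W$ that is simultaneously isotropic for all $\omega^\beta$, after which the canonical form drops out with no correction step. Your construction of the isotropic complement as a graph, solved via the isomorphism $B=(B^1,\dots,B^k)\colon V_x\to (W_0^\ast)^k$, is clean and has the virtue of treating $k=1$ and $k>1$ uniformly (the paper invokes the classical symplectic case separately for $k=1$); it also yields directly, at the linear level, the isotropic-complement statement that the paper only records later as a corollary of the manifold Darboux theorem. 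On the other hand, the paper identifies $V_\beta$ intrinsically as $\bigcap_{\alpha\neq\beta}\ker\omega^\alpha$ and verifies $\ker\omega^\alpha\subset V_x$, which feeds directly into the subsequent manifold argument (where integrability of the $V_\beta$ is needed); your definition $V_\beta = B^{-1}(0\oplus\cdots\oplus W^\ast\oplus\cdots\oplus 0)$ is equivalent, but its independence from the auxiliary choice of $W$ is not made explicit.
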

\begin{proof} The result amounts to the Darboux theorem for symplectic linear spaces for $k=1$. Hence, let us assume $k>1$. Since $\{0\}=\bigcap_{\alpha=1}^k\ker \omega^\alpha$, 
 one has that
\begin{equation}
\label{con}
\T^*_xM = \Ima \omega^1_x + \dotsb + \Ima \omega^k_x \,,\qquad \forall x\in M\,.
\end{equation}
Although all posterior structures in this proof refer to the point $x$, the point  will be omitted to simplify the notation. Since $\omega^\beta|_{V\times V}=0$, one has that $\omega^\beta(V)\subset V^\circ$ for $\beta=1,\ldots,k$. If $W$ is a regular distribution supplementary to $V$, then
$\rk W=n$, and $\rk \omega ^\beta\,(W )\leq n$. Note that
$$
\omega^1(V)+\dotsb+\omega^k(V)\subset V^\circ\,.
$$
Due to \eqref{con} and the above discussion, one has that 
$$
\omega^1(W)+\dotsb+\omega^k(W)
$$
is a distribution of rank $nk$, at least. 
This implies that $\rk\omega^\beta(W)=n$ and
$$ \omega^1(W)\oplus\dotsb\oplus\omega^k(W)\oplus V^\circ = \T^*M\,,\qquad V^\circ=\omega^1(V)+\ldots+\omega^k(V)\,. $$
If $\omega^\alpha(v+w)=0$, where $v\in V$ and $w\in W$, then $\omega^\alpha(v)=-\omega^\alpha(w)$. Since $\omega^\alpha(W)\cap \omega^\alpha(V)=0$ and $\rk \omega^\alpha|_W=n$, then $\omega^\alpha(v)=0$ and $\omega^\alpha(w)=0$, which implies that $w=0$ and $v\in \ker \omega^\alpha$. Hence, $\ker \omega^\alpha\subset V$. 
We can consider the distributions 
$$
V_\beta = 
 \bigcap_{\substack{1 \leq \alpha \leq k \\ \alpha \neq \beta}} \ker \omega^\alpha\,, \qquad \beta=1,\ldots,k\neq 1\,,\qquad \text{or}\qquad V^1=V\ \ (k=1)\,.
$$
Note that $\omega^\beta(V_\alpha)=0$ for $\alpha\neq \beta$ and for every $\alpha,\beta=1,\ldots,k$. 

Let $\{w_1,\ldots, w_n\}$ be a basis of $W$. Since $\omega^\alpha(W)$ has rank $n$ and its elements do not belong to $V^\circ$, then the restrictions of $\omega^\alpha(w_1),\ldots,\omega^\alpha(w_n)$ to $V$ are linearly independent and there exist $v_1,\ldots,v_n$ in $V$ such that $\omega^\alpha(v_1),\ldots,\omega^\alpha(v_n)$ are linearly independent on $W$, e.g. $\omega^\alpha(w_i,v_j) = \delta_{ij}$ for $i,j=1,\ldots,n$. Hence, $\rk\omega^\alpha(V)\geq n$. Since $\omega^\alpha(V)\subset V^\circ$, then $\rk \omega^\alpha(V)=n$. In particular, $\rk \omega^\alpha(V)=n$ for every $\alpha=1,\ldots,k$. Since $\bigcap_{\alpha=1}^k\ker\omega^\alpha=0$ and ${\rm Im}\,\omega^\alpha(V)\subset V^\alpha$ for $\alpha=1,\ldots,k$, it follows that $\phi:v\in V\mapsto (\omega^1(v),\ldots,\omega^k(v))\in V^\circ\oplus\overset{(k)}{\dotsb}\oplus V^\circ$, where $\oplus$ stands for a Whitney sum of vector bundles in the natural way, is injective. Hence, $\phi$ becomes an isomorphism and  $V\simeq \bigoplus_{\alpha=1}^kV_\alpha$. Indeed, $v=\sum_{\alpha=1}^k\phi^{-1}({\rm pr}_{\alpha}(v))$, where ${\rm pr}_\alpha:(w_1,\ldots,w_k)\in V^\circ \oplus\ldots\oplus V^\circ\mapsto (0,\ldots,0,w_\alpha,0,\ldots,0)\mapsto V^\circ \oplus\ldots\oplus V^\circ$, is the corresponding decomposition. 

Since $V = \bigoplus_{\alpha=1}^k V_\alpha$ and $\omega^\beta(V_\beta)\subset V^\circ$ has the same rank as $V^\beta$, it follows that $\rk V_\beta=n$. Hence, one can consider a basis $\{e^1,\ldots,e^n\}$ of $V^\circ$. There exists a basis $f_\beta^1,\ldots,f_\beta^n$ of each $V^\beta$ such that $\omega^\alpha(f_\beta^i)=-e^i\delta_\alpha^\beta$ for $i=1,\ldots,n$ and $\alpha,\beta=1\,\ldots,k$.
Considering a dual basis $\{f^\beta_i,e^i\}$ of $\T_x^*M$, one has that
\begin{equation}\label{eq:Form1}
\omega^\beta = e^i\wedge f^\beta_i + c_{ij}^\beta e^i\wedge e^j\,,
\qquad \beta=1,\ldots,k\,.
\end{equation}
If $e^\beta_i=f^\beta_i+c_{ij}^\beta e^j$, then
$$
\omega^\beta = \sum_{i=2}^n e^i\wedge e^\beta_i\,,
\qquad \beta=1,\ldots,k\,.
$$
Note that the change on the covectors $e^\beta_i$ implies that, in the dual bases to the bases $\{e^i,e_i^\alpha\}$ and $\{e^i,f_i^\alpha\}$ in $\T_x^*M$, one has that $f_\alpha^i=e_\alpha^i$  for $\alpha=1,\ldots,k$ and $i=1,\ldots,n$. Hence,
$
V_\alpha=
\langle f_\alpha^1,\ldots,f^n_\alpha\rangle=\langle e_\alpha^1,
\ldots,e_\alpha^n\rangle$ for $\alpha=1,\ldots,k$. \end{proof}

It stems from Theorem \ref{thm:k-symp-puntual} that $\omega^1,\ldots,\omega^k$ have constant rank. This fact comes from the definition of $k$-symplectic structure, the dimension of $M$, and the rank of $V$. Note also that the last paragraph in the proof of Theorem \ref{thm:k-symp-puntual} can be almost straightforwardly changed to put a symplectic linear form and a Lagrangian subspace into a canonical form.

\begin{definition} Given a $k$-symplectic manifold $(M,\omega^1,\ldots,\omega^k,V)$ with $k\neq 1$, we set
$$
V_\beta= \bigcap_{\substack{\alpha=1 \\ \alpha\neq \beta}}^k\ker\omega^\alpha\,,\qquad \beta=1,\ldots,k.
$$
\end{definition}
\begin{lemma}\label{lem:first-integrals}
For a $k$-symplectic manifold $(M,\omega^1,\ldots,\omega^k, V)$, the distributions $V^1,\ldots,V^k$ satisfy that every $x\in M$ admits a coordinate system 
$\{y^1,\ldots,y^n;y^\alpha_1,\dotsc,y_n^\alpha\}$ on a neighbourhood of $x$ so that
$$
V_\alpha =
\left\langle \frac{\partial}{\partial y^\alpha_1},\ldots,\frac{\partial}{\partial y^\alpha_n} \right\rangle ,\qquad \alpha=1,\ldots,k.
$$
\end{lemma}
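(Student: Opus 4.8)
The plan is to build the chart in two stages: first straighten the polarisation $V$, then straighten each $\ker\omega^\gamma$ in a way that is compatible with the first stage, and finally check that the functions produced are independent and adapted to every $V_\alpha$. When $k=1$ one has $V^1=V$, and the assertion is just the Frobenius theorem for the integrable corank-$n$ distribution $V$; so I assume $k>1$. From Theorem~\ref{thm:k-symp-puntual}, read off the following facts at the level of distributions: each $\omega^\gamma$ has constant rank $2n$, each $V_\alpha$ is a regular distribution of rank $n$, $V=\bigoplus_{\alpha=1}^{k}V_\alpha$, and $V_\alpha\subseteq\ker\omega^\gamma$ whenever $\alpha\neq\gamma$ (immediate from the definition of $V_\alpha$). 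Comparing ranks, $\dim\ker\omega^\gamma=\dim M-2n=(k-1)n=\dim\bigoplus_{\alpha\neq\gamma}V_\alpha$, so in fact $\ker\omega^\gamma=\bigoplus_{\alpha\neq\gamma}V_\alpha$. Moreover each $\ker\omega^\gamma$ is integrable: for $X,Y\in\ker\omega^\gamma$ the closedness of $\omega^\gamma$ gives $\Lie_X\omega^\gamma=\d\inn_X\omega^\gamma+\inn_X\d\omega^\gamma=0$, hence $\inn_{[X,Y]}\omega^\gamma=\Lie_X\inn_Y\omega^\gamma-\inn_Y\Lie_X\omega^\gamma=0$, and since the rank is constant the Frobenius theorem applies.

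Next, fix $x\in M$. Since $V$ is integrable of corank $n$, on a neighbourhood there are functionally independent first integrals $y^1,\dots,y^n$ of $V$, so $\d y^1\wedge\dots\wedge\d y^n\neq0$ and their differentials span $V^\circ$ pointwise. For each $\gamma$, shrinking the neighbourhood $U$, the leaf space $U/\ker\omega^\gamma$ is a $2n$-manifold with a submersion $\pi_\gamma$ whose vertical distribution is $\ker\omega^\gamma$; since $\ker\omega^\gamma\subseteq V$, the $y^\beta$ are first integrals of $\ker\omega^\gamma$ and descend to functions with independent differentials on $U/\ker\omega^\gamma$, which I complete there to a chart $\{\bar y^1,\dots,\bar y^n,\bar y^\gamma_1,\dots,\bar y^\gamma_n\}$. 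Setting $y^\gamma_j=\pi_\gamma^\ast\bar y^\gamma_j$, these are first integrals of $\ker\omega^\gamma$ and $\{\d y^1,\dots,\d y^n,\d y^\gamma_1,\dots,\d y^\gamma_n\}$ is a basis of $(\ker\omega^\gamma)^\circ$ at each point, being $\pi_\gamma^\ast$ of a coframe while $\Ima\pi_\gamma^\ast=(\ker\T\pi_\gamma)^\circ=(\ker\omega^\gamma)^\circ$.

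The one point that needs care is that the $n(k{+}1)=\dim M$ functions $\{y^\beta\}\cup\{y^\gamma_j\}_{\gamma,j}$ have everywhere independent differentials. I would argue pointwise: if $\sum_\beta a_\beta\d y^\beta+\sum_{\gamma,j}b^\gamma_j\d y^\gamma_j=0$, restrict to $V_\delta$; then $\d y^\beta|_{V_\delta}=0$ because $V_\delta\subseteq V$, and $\d y^\gamma_j|_{V_\delta}=0$ for $\gamma\neq\delta$ because $V_\delta\subseteq\ker\omega^\gamma$, leaving $\sum_j b^\delta_j\,\d y^\delta_j|_{V_\delta}=0$. Since $V=\ker\omega^\delta\oplus V_\delta$, the restriction map $(\ker\omega^\delta)^\circ\to V_\delta^\ast$ is surjective with kernel $(\ker\omega^\delta)^\circ\cap V_\delta^\circ=V^\circ=\langle\d y^\beta\rangle$, so $\{\d y^\delta_j|_{V_\delta}\}_{j=1}^n$ is a basis of the $n$-dimensional $V_\delta^\ast$ and every $b^\delta_j$ vanishes; letting $\delta$ run over $1,\dots,k$ kills all $b^\gamma_j$, and then $a_\beta=0$ from $\d y^1\wedge\dots\wedge\d y^n\neq0$. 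Hence $\{y^\beta,y^\gamma_j\}$ is a chart around $x$.

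In that chart $\ker\omega^\gamma$ is the annihilator of $\langle\d y^1,\dots,\d y^n,\d y^\gamma_1,\dots,\d y^\gamma_n\rangle$, i.e. $\ker\omega^\gamma=\langle\partial/\partial y^\alpha_i:\alpha\neq\gamma,\ i=1,\dots,n\rangle$, whence $V_\alpha=\bigcap_{\gamma\neq\alpha}\ker\omega^\gamma=\langle\partial/\partial y^\alpha_1,\dots,\partial/\partial y^\alpha_n\rangle$, which is the assertion. The main obstacle is precisely this simultaneous straightening: each $\ker\omega^\gamma$, and each $V_\alpha$, is integrable on its own, but one needs all of them rectified by a single chart; this is made possible by sharing the first integrals $y^\beta$ of $V$ across all the $\pi_\gamma$, together with the pointwise splittings $V=\bigoplus_\alpha V_\alpha$ and $\ker\omega^\gamma=\bigoplus_{\alpha\neq\gamma}V_\alpha$ furnished by Theorem~\ref{thm:k-symp-puntual}.
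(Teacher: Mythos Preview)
Your proof is correct and follows essentially the same route as the paper: both take first integrals $y^1,\dots,y^n$ of $V$, use that $\ker\omega^\gamma=\bigoplus_{\alpha\neq\gamma}V_\alpha$ is integrable (closedness of $\omega^\gamma$) to produce further first integrals $y^\gamma_j$, and then read off $V_\alpha$ as the span of the corresponding coordinate vector fields. Your treatment is in fact more complete than the paper's on the one delicate point: the paper asserts ``by construction'' that the $n(k{+}1)$ functions form a chart, whereas you supply the pointwise linear-algebra argument (restricting to each $V_\delta$ and using $(\ker\omega^\delta)^\circ\cap V_\delta^\circ=V^\circ$) that actually establishes independence of the differentials.
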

\begin{proof} Let $y^1,\ldots,y^n$ be common functionally independent first-integrals for all vector fields taking values in $V$. If $k=1$, the result follows trivially, so we assume $k> 1$. Given different $\alpha_1,\ldots,\alpha_{k-1}\in\{1,\ldots,k\}$, one has that 
$$
V_{\alpha_1}\oplus\dotsb\oplus V_{\alpha_{k-1}}=\ker \omega^\beta,
$$
where $\beta$ is the only number in $\{1,\ldots,k\}$ not included in $\{\alpha_1,\ldots,\alpha_{k-1}\}$. 
Hence, the distribution $V_{\alpha_1}\oplus\ldots\oplus V_{\alpha_{k-1}}$ has rank $n(k-1)$, it is integrable because $\omega^\beta$ is closed, and the vector fields taking values in it have $n$ common local first-integrals $y^\beta_1,\ldots,y^\beta_n$ such that $\d y^\beta_1\wedge \ldots \wedge \d y^\beta_n\wedge \d y^1\wedge \ldots\wedge \d y^n\neq 0$. 
By construction,  $\{y^1_1,\ldots,y^1_n,\ldots,y^k_1,\ldots,y^k_n,y^1,\ldots,y^n\}$ becomes a local coordinate system on $M$ and 
$$
V_\alpha=\left(\bigcap_{i=1}^n\ker\d y^i\right)\cap\left(\bigcap_{\substack{\beta\neq \alpha\\i=1,\ldots,n}}\ker \d y^\beta_i\,\right).
$$
Moreover, $\dfrac{\partial}{\partial y^\beta_1},\ldots,\dfrac{\partial}{\partial y^\beta_n}$ vanish on all coordinates $y^{\alpha}_1,
\ldots,y^\alpha_n$ with $\alpha\neq \beta$. 
Hence, 
$$
V_\beta=\left\langle\frac{\partial}{\partial y^\beta_1},\ldots,\frac{\partial}{\partial y^\beta_n}\right\rangle=\bigcap _{\alpha\neq \beta=1}^k\ker \omega^\alpha,\qquad \beta=1,\ldots,k.
$$
\end{proof}

\begin{theorem}[{\bf Darboux theorem for $k$-symplectic manifolds}]
\label{thm:Darboux k-simp}
Let $(M,\omega^1,\dotsc, \omega^k,V)$ be a $k$-symplectic manifold. 
Around every point $x\in M$, 
there exist local coordinates $\{q^i,p^\alpha_i\}$, 
with $1\leq i\leq n$ and $1\leq\alpha\leq k$, 
such that
\begin{equation}\label{Eq:DarKSym}
\omega^\alpha =\sum_{i=1}^n \d q^i\wedge\d p^\alpha_i\,,
\quad 
V = \left\langle\parder{}{p^\alpha_i}\right\rangle_{\tiny \begin{gathered}i=1,\ldots,n,\\\alpha=1,\ldots,k\end{gathered}} . 
\end{equation}
\end{theorem}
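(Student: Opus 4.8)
The plan is to reduce the statement, block by block in the index $\alpha$, to a polarised symplectic Darboux theorem in which the position coordinates are prescribed in advance. First I would use Lemma~\ref{lem:first-integrals} to fix, on a neighbourhood of $x$, coordinates $\{y^1,\dots,y^n;y^\alpha_1,\dots,y^\alpha_n\}_{\alpha=1}^{k}$ with $V_\alpha=\langle\partial/\partial y^\alpha_1,\dots,\partial/\partial y^\alpha_n\rangle$, where $y^1,\dots,y^n$ are common first integrals of every vector field taking values in $V=\bigoplus_\alpha V_\alpha$. By Lemma~\ref{lem:first-integrals} one also has $\ker\omega^\alpha=\bigoplus_{\beta\neq\alpha}V_\beta=\langle\partial/\partial y^\beta_i:\beta\neq\alpha\rangle$, so the $y^1,\dots,y^n$ are constant along the leaves of $\ker\omega^\alpha$. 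I would then pass to the (locally defined) quotient $\pi_\alpha\colon M\to\overline{M}_\alpha:=M/\ker\omega^\alpha$: since $\omega^\alpha$ is closed, $\inn_X\omega^\alpha=0$ and $\Lie_X\omega^\alpha=0$ for every $X$ in $\ker\omega^\alpha$, so there is a unique $\overline{\omega}^\alpha\in\Omega^2(\overline{M}_\alpha)$ with $\pi_\alpha^\ast\overline{\omega}^\alpha=\omega^\alpha$; as $\omega^\alpha$ has constant rank $2n$ (Theorem~\ref{thm:k-symp-puntual}) and $\dim\overline{M}_\alpha=n(k{+}1)-n(k{-}1)=2n$, the form $\overline{\omega}^\alpha$ is symplectic. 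Moreover $\pi_{\alpha\ast}V_\alpha$ has rank $n$, is integrable, and is isotropic for $\overline{\omega}^\alpha$ because $\restr{\omega^\alpha}{V\times V}=0$; hence it is a Lagrangian foliation of $\overline{M}_\alpha$, and each $y^j$ descends to a function $\overline{y}^j$ on $\overline{M}_\alpha$, so that $\overline{y}^1,\dots,\overline{y}^n$ are functionally independent first integrals of $\pi_{\alpha\ast}V_\alpha$.

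The core step is a polarised strengthening of Theorem~\ref{thm-darboux-symplectic} applied to each $(\overline{M}_\alpha,\overline{\omega}^\alpha,\pi_{\alpha\ast}V_\alpha)$, keeping the transverse functions $\overline{y}^j$ fixed: it should produce functions $\overline{p}^\alpha_1,\dots,\overline{p}^\alpha_n$ with $\overline{\omega}^\alpha=\sum_i\d\overline{y}^i\wedge\d\overline{p}^\alpha_i$ and $\pi_{\alpha\ast}V_\alpha=\langle\partial/\partial\overline{p}^\alpha_i\rangle$. To obtain this I would introduce the Hamiltonian vector fields $X^\alpha_j$ on $\overline{M}_\alpha$ defined by $\inn_{X^\alpha_j}\overline{\omega}^\alpha=\d\overline{y}^j$; since $\d\overline{y}^j$ annihilates the Lagrangian distribution $\pi_{\alpha\ast}V_\alpha$, the $X^\alpha_j$ take values in $\pi_{\alpha\ast}V_\alpha$, are independent and hence span it, and they commute because $[X^\alpha_i,X^\alpha_j]=X^\alpha_{\{\overline{y}^i,\overline{y}^j\}}$ with $\{\overline{y}^i,\overline{y}^j\}=X^\alpha_i(\overline{y}^j)=0$, the $\overline{y}^j$ being first integrals of $\pi_{\alpha\ast}V_\alpha$. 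A simultaneous straightening of these commuting independent vector fields, keeping $\overline{y}^1,\dots,\overline{y}^n$ as the transverse coordinates, gives coordinates in which $X^\alpha_j=\partial/\partial\overline{p}^\alpha_j$; contracting $\overline{\omega}^\alpha$ with them and using $\d\overline{\omega}^\alpha=0$ shows that $\overline{\omega}^\alpha=\sum_i\d\overline{y}^i\wedge\d\overline{p}^\alpha_i+\sum_{i<j}a^\alpha_{ij}(\overline{y})\,\d\overline{y}^i\wedge\d\overline{y}^j$ with coefficients depending only on the $\overline{y}$'s; then the Poincaré lemma gives $\sum_{i<j}a^\alpha_{ij}\,\d\overline{y}^i\wedge\d\overline{y}^j=\d\beta^\alpha$ for a one-form $\beta^\alpha=\sum_i\beta^\alpha_i(\overline{y})\,\d\overline{y}^i$, and replacing $\overline{p}^\alpha_i$ by $\overline{p}^\alpha_i-\beta^\alpha_i(\overline{y})$ does not alter $\langle\partial/\partial\overline{p}^\alpha_i\rangle$ and leaves $\overline{\omega}^\alpha=\sum_i\d\overline{y}^i\wedge\d\overline{p}^\alpha_i$. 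This is exactly the content of the theorem in the case $k=1$, with the sharpening that the position coordinates may be chosen equal to the given first integrals.

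It remains to assemble the pieces. Set $q^i:=y^i$ and $p^\alpha_i:=\overline{p}^\alpha_i\circ\pi_\alpha$, so that $\omega^\alpha=\pi_\alpha^\ast\overline{\omega}^\alpha=\sum_i\d q^i\wedge\d p^\alpha_i$ for each $\alpha$. Since each $p^\alpha_i$ is a $\pi_\alpha$-pullback, $\d p^\alpha_i$ annihilates $\ker\omega^\alpha=\langle\partial/\partial y^\beta_l:\beta\neq\alpha\rangle$, while on $V_\alpha=\langle\partial/\partial y^\alpha_l\rangle$ the matrix $\big(\d p^\alpha_i(\partial/\partial y^\alpha_l)\big)$ is the matrix of $\restr{\pi_{\alpha\ast}}{V_\alpha}$ in the basis $\{\partial/\partial\overline{p}^\alpha_m\}$ of $\pi_{\alpha\ast}V_\alpha$, hence invertible; together with $\d q^i(\partial/\partial y^l)=\delta^i_l$ and $\d q^i(\partial/\partial y^\beta_l)=0$ this makes the Jacobian of $\{y^j;y^1_l,\dots,y^k_l\}\mapsto\{q^j;p^1_l,\dots,p^k_l\}$ block-triangular with invertible diagonal blocks, so $\{q^i,p^\alpha_i\}$ is a coordinate system around $x$. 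In it $\omega^\alpha=\sum_i\d q^i\wedge\d p^\alpha_i$, and each $V_\alpha$, annihilating all the $\d q^i$ and all the $\d p^\beta_i$ with $\beta\neq\alpha$, equals $\langle\partial/\partial p^\alpha_l\rangle$, whence $V=\bigoplus_\alpha V_\alpha=\langle\partial/\partial p^\alpha_i\rangle_{i=1,\dots,n;\ \alpha=1,\dots,k}$; this is \eqref{Eq:DarKSym}.

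The main obstacle is the polarised symplectic Darboux step performed with the transverse coordinates $\overline{y}^1,\dots,\overline{y}^n$ prescribed once and for all: it is precisely this rigidity that makes the $k$ charts produced block by block compatible and glue into a single coordinate system, whereas an unconstrained application of Theorem~\ref{thm-darboux-symplectic} to the individual $\omega^\alpha$ would yield $nk$ mutually redundant position functions rather than $n$. Closedness of the $\omega^\alpha$ enters twice and essentially: to ensure that $\omega^\alpha$ descends to $\overline{M}_\alpha$ and that the $a^\alpha_{ij}$ depend only on the $\overline{y}$'s, and, through the Poincaré lemma, to straighten the residual $\d\overline{y}^i\wedge\d\overline{y}^j$ term.
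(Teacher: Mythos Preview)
Your proof is correct and follows essentially the same route as the paper. Both arguments start from Lemma~\ref{lem:first-integrals} to obtain coordinates adapted to the decomposition $V=\bigoplus_\alpha V_\alpha$ and the identification $\ker\omega^\alpha=\bigoplus_{\beta\neq\alpha}V_\beta$, then reduce each $\omega^\alpha$ to a $2n$-dimensional symplectic form carrying the Lagrangian foliation $V_\alpha$ and the prescribed transverse functions $y^1,\dots,y^n$, and finally build the $p^\alpha_i$ from the commuting Hamiltonian vector fields of the $y^i$ (the paper calls this an adaptation of the Liouville--Mineur--Arnold theorem). The only difference is cosmetic: you perform the reduction by passing to the quotient $M/\ker\omega^\alpha$, while the paper observes that the coefficients of $\omega^\alpha$ depend only on $(y^l,y^\alpha_l)$ and works on the corresponding $\R^{2n}$; your final straightening-plus-Poincar\'e-lemma step and the paper's potential construction for $\jmath_F^\ast\Theta^\alpha_i$ are two phrasings of the same action-coordinate argument. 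Your explicit verification that the Jacobian of $\{y^j;y^\beta_l\}\mapsto\{q^j;p^\beta_l\}$ is block-triangular with invertible diagonal blocks is a welcome detail that the paper leaves implicit.
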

\begin{proof} 
By our  Darboux theorem for $k$-symplectic vector spaces, namely Theorem \ref{thm:k-symp-puntual}, 
there exists a basis 
$\{e^1,\dotsc,e^n;e_1^\alpha,\dotsc,e_n^\alpha\}_{\alpha = 1,\dotsc,k}$ 
of $\cT_xM$ 
such that  $\omega^\alpha_x =\sum_{i=1}^n e^i\wedge e_i^\alpha$ for $\alpha = 1,\dotsc,k$. 
The basis is chosen so that the dual basis  $\{e_1,\dotsc,e_n,e^1_\alpha,\dotsc,e^n_\alpha\}$ , with $\alpha=1,\ldots,k$, 
is such that  $V=\langle e_\alpha^i\rangle _{\substack{\alpha=1,\dotsc,k \\ i=1,\dotsc,n}}$.  Recall that the subspaces in $\T_xM$ of the form
$$
V_{\beta x} = 
\bigcap_{\substack{\alpha=1 \\ \alpha\neq \beta}}^k\ker \omega_x^\alpha = 
\left\langle e_1^\beta,\dotsc,e_n^\beta\right\rangle ,
\quad \beta = 1,\dotsc,k\,,
$$
satisfy that $V_x = \bigoplus_{\beta = 1}^kV_{\beta x}$. 
 By Lemma \ref{lem:first-integrals}, there exist variables 
$\{y^j,y_j^\beta\}$, with $j = 1,\dotsc,n$ and $\beta = 1,\dotsc,k$, 
such that, locally, 
$\displaystyle V^\beta = \Big\langle \parder{}{y^\beta_1},\dotsc, \parder{}{y^\beta_n}\Big\rangle$, 
with $\beta = 1,\dotsc,k$. 
Moreover, $\ker \omega^\beta = \bigoplus_{\alpha\neq \beta}V^\alpha$. 
Using previous results and since $\restr{\omega^\beta}{V\times V} = 0$, 
we have 
$\omega^\beta = f_i^{j\beta} \d y^i\wedge \d y^\beta_j + g_{ij}\d y^i\wedge \d y^j$ 
for certain functions $g_{ij},f_i^{j\beta}$, with $i,j=1,\dotsc,n$ and $\beta = 1,\dotsc,k$. 
Since $\d\omega^\beta = 0$, it follows that 
$f^{j\beta}_i = 
f^{j\beta}_i(y^l,y_l^\beta)$  and $g_{ij}=g_{ij}(y^l,y_l^\beta)$ for $i,j,l=1,\ldots,n$. Therefore, each $\omega^\beta$ can be considered as a differential two-form on $\R^{2n}$. 
Moreover, each $V^\beta$ can be then considered as a Lagrangian distribution of a symplectic two-form $\omega^\beta$, 
when it is considered as a differential two-form on $\R^{2n}$. Consequently,  for a fixed $\beta$, one has
$$
0 = -\parder{}{y^\beta_j} y^i = 
\inn_{X^\beta_{y^i}}\inn_{\partial/\partial y^\beta_j} \omega^\beta,
\quad \quad i,j=1,\dotsc,n\quad\Longrightarrow\quad 
X^\beta_{y^i}\in (V_\beta)^\perp=V_\beta\,,
\quad i=1,\dotsc,n\,.
$$
Note that the orthogonal is relative to the restriction of $\omega^\beta$ to $\mathbb{R}^{2n}$. Whatever, by additionally considering $\iota_{X_{y^i}^\beta} \omega^\alpha=0$ for $\alpha\neq \beta$, one can also see that  $X^\beta_{y^i}$ becomes a vector field taking values in $V^\beta$.
 What follows is an adaptation of the Liouville--Mineur--Arnold theorem (see also \cite{CM_18}).  
Since $V^\alpha$ is integrable, we can consider a leaf $F$ of $V^\alpha$ and its canonical inclusion $\jmath_F:F\hookrightarrow M$. 
Let us define the map $\zeta \colon x\in M\mapsto (y^1(x),\ldots,y^n(x))\in \R^n$. 
Consider a regular point $x'\in M$ of $\zeta$. 
Since the map $\zeta$ is regular in an open neighbourhood of $x'$, there exist vector fields $Y_1,\dotsc,Y_n$ on a neighbourhood of $x'$ such that $Y_i$ 
  and $\displaystyle \parder{}{y^i}$ on $\mathbb{R}^n$ are $\zeta$-related for $i=1,\ldots,n$. 
 Consider the inner contractions $\Theta_i^\alpha=\inn_{Y_i}\omega^\alpha$ for $i=1,\ldots,n$ on a neighbourhood of $x'$ in $M$ and the vector fields $X^\alpha_{y^i}$, which take values in $V_\alpha$. 
Then,
$$ 
\inn_{X^\alpha_{y^i}}{\Theta_j^\alpha}=
\inn_{X^\alpha_{y^i}}\inn_{Y_j}\omega^\alpha=\omega^\alpha(Y_j,X^\alpha_{y^i})=
-\omega^\alpha(X^\alpha_{y^i},Y_j)=
-Y_jy^i=
-\delta^i_j,\qquad i,j=1,\ldots,n.
$$
Hence, given two vector fields $X^\alpha_{y^i},X^\alpha_{y^j}$, one has
$$
(\d\Theta_\ell^\alpha)(X^\alpha_{y^i},X^\alpha_{y^j})=
X^\alpha_{y^i}\Theta^\alpha_\ell(X^\alpha_{y^j})-X^\alpha_{y^j}\Theta^\alpha_\ell(X^\alpha_{y^i})-\Theta^\alpha_\ell([X^\alpha_{y^i},X^\alpha_{y^j}])=
0\,.
$$
The latter is due to the fact that $[X^\alpha_{y^i},X^\alpha_{y^j}]$ is the Hamiltonian vector field of $\{y^i,y^j\}=X^\alpha_{y^j}y^i=0$ because $X_{y^j}$ takes values in  $V_\alpha$. 
Thus, $\jmath_F^*\Theta^\alpha$ is closed and there exists a potential $\jmath_F^*\Theta_i^\alpha=\d p^\alpha_i$. 
And recalling that $\omega^
\alpha|_{V_\alpha\times V_\alpha}=0$, it follows that $\omega^\alpha=\d y^i\wedge \d p^\alpha_i$. Moreover, it follows that
$$
V_\alpha=\left\langle \frac{\partial}{\partial p^\alpha_i}\right\rangle,\qquad \alpha=1,\ldots,k,
$$
and $V$ takes the proposed form.
\end{proof}

Let us recall that the above proof could have been cut by the half by referring straightforwardly to the Liouville--Mineur--Arnold theorem, as since $\{y_i,y_j\}=0$, with $i,j=1,\ldots,n$, that theorem implies that there are functions $p^\beta_1,\dotsc, p^\beta_n$ with $\beta=1,\ldots,k$ such that $\omega^\beta=\d y^i\wedge \d p^\beta_i$ for each $\beta=1,\ldots,k$. Instead, we decided to give a complete, self-contained proof. Without this full explanation, Theorem \ref{thm:Darboux k-simp} would probably be the shortest direct proof of the Darboux theorem for $k$-symplectic manifolds in the literature. Although Theorem \ref{thm:Darboux k-simp} relies on Lemma \ref{lem:first-integrals} and the $k$-symplectic linear Darboux  theorem, Lemma \ref{lem:first-integrals} is a rather straightforward geometric result, which was described carefully to verify all the details, and only the fact that $V=\bigoplus_{\alpha=1}^kV^\alpha$ is needed from the $k$-symplectic linear Darboux theorem to prove our full $k$-symplectic Darboux theorem.  

Moreover, note that one could have assumed that Darboux coordinates only are concerned with the canonical expressions of $\omega^1,
\ldots,
\omega^k$. It turns out that given the conditions on the distribution, once we put $\omega^1,
\ldots,
\omega^k$ in a canonical manner, we also put a basis of $V$ in the desired form. We will see in next section that this is not the case for Darboux coordinates for other structures.

\begin{definition} Given a $k$-symplectic manifold $(M,\omega^1,\ldots,\omega^k,V)$, we call {\it $k$-symplectic Darboux coordinates} the coordinates allowing us to write $\omega^1,\ldots,\omega^k$ and $V$ in the form \eqref{Eq:DarKSym}.
\end{definition}

The $k$-symplectic Darboux coordinates will be called just Darboux coordinates when it does not lead to any misunderstanding. Note that the proof of Theorem~\ref{thm:Darboux k-simp} shows that $k$-symplectic Darboux coordinates induce the existence of a distribution $V'=\langle \partial/\partial y^1,\ldots,\partial/\partial y^n\rangle$ that allows us to state the following result.
        
\begin{corollary} Every $k$-symplectic manifold $(M,\omega^1,\ldots,\omega^k,V)$ admits, locally, a supplementary integrable distribution $V'$ on $M$ such that $V\oplus V'=\T M$ and $\omega^\alpha |_{V'\times V'}=0$ for $\alpha=1,\ldots,k$.
\end{corollary}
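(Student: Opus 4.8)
The plan is to extract the supplementary distribution directly from the $k$-symplectic Darboux coordinates produced by Theorem~\ref{thm:Darboux k-simp}. Given a point $x\in M$, apply that theorem to obtain local coordinates $\{q^i,p^\alpha_i\}$ around $x$ in which $\omega^\alpha=\sum_{i=1}^n\d q^i\wedge\d p^\alpha_i$ and $V=\langle \partial/\partial p^\alpha_i\rangle$. (In the proof the coordinates $q^i$ are the functions there called $y^i$, the common first integrals of all vector fields in $V$.) I would then simply set
$$
V' = \left\langle \parder{}{q^1},\ldots,\parder{}{q^n}\right\rangle
$$
on the corresponding coordinate neighbourhood, and check the three claimed properties.

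First, $V'$ is integrable: it is spanned by the coordinate vector fields $\partial/\partial q^i$, which pairwise commute, so Frobenius applies trivially. Second, $V\oplus V' = \T M$ locally: on the coordinate patch, $\{\partial/\partial q^i,\partial/\partial p^\alpha_i\}$ is a full coordinate frame of $\T M$, with $V$ spanned by the $\partial/\partial p^\alpha_i$ and $V'$ by the $\partial/\partial q^i$, so the two subspaces are complementary at every point and their sum is everything. Third, $\omega^\alpha|_{V'\times V'}=0$: since $\omega^\alpha=\sum_i \d q^i\wedge\d p^\alpha_i$, the contraction $\omega^\alpha(\partial/\partial q^i,\partial/\partial q^j)$ vanishes for all $i,j$ because $\d q^i\wedge\d p^\alpha_i$ pairs a $\d q$ with a $\d p$, and $\partial/\partial q^j$ annihilates all the $\d p^\alpha_i$. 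Hence $\omega^\alpha$ restricted to $V'\times V'$ is identically zero for each $\alpha=1,\ldots,k$.

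Honestly, there is no real obstacle here: the corollary is essentially a bookkeeping consequence of the canonical form already established, and the only thing worth a sentence is that the local nature of the statement is necessary (the distribution $V'$, like the Darboux chart, is defined only on a neighbourhood of each point and need not glue to a global distribution). If one wanted a slightly more invariant phrasing, one could note that in the notation of the proof of Theorem~\ref{thm:Darboux k-simp} the distribution $V'=\bigcap_{i=1}^n\ker\d y^i$ is exactly the annihilator of the span of the common first integrals, and that $\restr{\omega^\alpha}{V'\times V'}=0$ because $\omega^\alpha=\d y^i\wedge\d p^\alpha_i$; but the coordinate computation above is already complete.
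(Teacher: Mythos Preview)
Your proof is correct and matches the paper's approach exactly: the paper simply observes, right before stating the corollary, that the Darboux coordinates of Theorem~\ref{thm:Darboux k-simp} yield the distribution $V'=\langle \partial/\partial y^1,\ldots,\partial/\partial y^n\rangle$, which is precisely your $\langle\partial/\partial q^i\rangle$.

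One small slip in your closing parenthetical: you write $V'=\bigcap_{i=1}^n\ker\d y^i$, but that intersection is $V$, not $V'$ (the $y^i$ are first integrals of $V$, so $V$ lies in every $\ker\d y^i$, and a dimension count gives equality). The correct invariant description would be $V'=\bigcap_{\alpha,i}\ker\d p^\alpha_i$. This does not affect your actual argument, which you rightly note is already complete before that remark.
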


The canonical model of a $k$-symplectic manifold is the cotangent bundle of $k^1$-covelocities, namely $\bigoplus^k\cT Q = \cT Q \oplus \overset{k}{\dotsb} \oplus \cT Q$  (the Whitney sum of $k$ copies of the cotangent bundle of a manifold $Q$),  equipped with the distribution $V = \ker \T\pi$, where $\pi^\alpha\colon \bigoplus^k\cT Q \to \cT Q$ and $\pi:\bigoplus^k\cT Q \to  Q$ are the canonical projections onto the $\alpha$-th component and $Q$ respectively, and the canonical presymplectic two-forms $\omega^\alpha = (\pi^\alpha)^*\omega$ with $\alpha=1,\ldots,k$, where $\omega$ stands for the canonical symplectic two-form in $\cT Q$. 
In this model, natural coordinates are Darboux coordinates, 
and the $k$-symplectic Darboux theorem states that $k$-symplectic manifolds are locally diffeomorphic to a cotangent bundle of $k^1$-covelocities. Meanwhile, the distribution $V'$ is a distribution in $\bigoplus_{\alpha=1}^k\cT Q$  whose leaves project diffeomorphically onto $Q$.

As in the previous sections, one can introduce the notion of compatible connection with a $k$-symplectic manifold \cite{Bla_09,CB_08}.

\begin{definition}
A {\it $k$-symplectic connection} on a $k$-symplectic manifold $(M,\omega^\alpha,V)$ is a  connection $\nabla$ on $M$ such that $\nabla\omega^\alpha = 0$ for every $\alpha = 1,\dotsc,k$.
\end{definition}

Again, Darboux coordinates allow us to define, locally, a connection, $\nabla$, such that $\nabla \omega^\alpha=0$ for $\alpha=1,\ldots,k$. And vice versa, the $k$-symplectic linear Darboux Theorem allows us to put $\omega^1,\ldots,\omega^k$ and the distribution $V$ into a canonical form on the tangent space at a point and, a flat connection compatible with the $k$-symplectic manifold enables us to expand this canonical form to an open neighbourhood of the initial point where $\omega^1,\ldots,\omega^k$ and $V$ take the form \eqref{Eq:DarKSym}.

It is worth recalling the interesting work \cite{CB_08}, where connections compatible with $k$-symplectic structures are studied. These connections depend on the existence of certain foliations and are canonical once such foliations are given. By using such foliations and distributions, the Darboux theorem can be proved. 
We find that our approach here is more direct than that in \cite{CB_08} and the Darboux theorem is given in our work more geometrically. 
Note that a $k$-symplectic Darboux theorem also appears as a particular case of the multisymplectic theory in \cite{FG_13}.   

Now, let us study Darboux theorems for $k$-presymplectic manifolds (see \cite{FY_13,GMR_09} for some previous results on this case). This case poses several fundamental problems.  First, there exist several possible definitions of $k$-presymplectic manifolds depending on their possible applications or representative cases. Some possible definition of $k$-presymplectic manifold can be found in \cite{FY_13}. Meanwhile, \cite{GMR_09} defines a $k$-presymplectic manifold as a manifold equipped with $k$ closed two-forms. It is clear that we will not have Darboux coordinates with such a general definition. As shown next, a direct analogue of the Darboux coordinates is not available in some of the possible definitions of $k$-presymplectic structure, while cases that admit Darboux coordinates may not be of physical interest. Let us give a brief analysis of this matter. 

\begin{definition}\label{Def::presym}
Let $\bigoplus_{\alpha=1}^k \T^*Q$ be endowed with its canonical $k$-symplectic structure $\omega^1,\ldots,\omega^k$ and let $\pi \colon \bigoplus_{\alpha=1}^k \T^*Q \rightarrow Q$ be the canonical projection onto $Q$. 
A {\it canonical foliated $k$-presymplectic manifold} is a tuple  $(S,\omega_S^1,\ldots,\omega_S^k)$ given by a submanifold $S\subset \bigoplus_{\alpha=1}^k \T^*Q$ 
such that $\pi|_S \colon S \rightarrow Q$ 
is a fibre bundle and $S$ is endowed, for $\jmath_S \colon S \rightarrow \bigoplus_{\alpha=1}^k \T^*Q$ being the canonical inclusion,  with the $k$ differential two-forms $\omega^\alpha_S=\jmath^*_S\omega^\alpha$, for $\alpha=1,\ldots,k$. The rank of the fibration $\pi|_S:S\rightarrow Q$ is called the rank of $(S,\omega^1_S,\ldots,\omega^k_S)$ while $\omega^1_S,\ldots,\omega^k_S$ are called a {\it canonical foliated $k$-presymplectic structure}. 
\end{definition}
More generally, the above gives rise to the following definition.

\begin{definition}
A {\it foliated $k$-presymplectic manifold} is a tuple $(M,\omega^1,\ldots,\omega^k)$ such that there exists a
canonical foliated $k$-presymplectic manifold $(S,\omega^1_S,\ldots,\omega^k_S)$ and a global  diffeomorphism $\phi:M\rightarrow S$  such that $\phi^*\omega_S^\alpha=\omega^\alpha$ for $\alpha=1,\ldots,k$. A foliated $k$-presymplectic manifold  $(M,\omega^1,\ldots,\omega^k)$ is {\it exact} if $\omega^1,\ldots,\omega^k$ are exact. 
\end{definition}

It is worth noting that the previous definition also makes  sense for $\phi$ being, only, a local diffeomorphism. In that case, the main results to be displayed afterward remain valid, but many more technical details are to be considered to prove them. To keep our presentation simple and highlight the main ideas about Darboux coordinates, which are generically local, we have defined $\phi$ to be a global diffeomorphism.

Definition \ref{Def::presym} implies that $\omega_S^1,\ldots,\omega_S^k$ admit a natural distribution $V=\ker \T\pi\cap \T S$ of rank $\dim S-\dim Q$ such that $\omega_S^\alpha|_{V\times V}=0$ for $\alpha=1,\ldots,k$. If $S=\bigoplus_{\alpha=1}^k\cT Q$, then $V=\ker \T\pi$ and $S$ gives rise to  a $k$-symplectic structure admitting Darboux coordinates.

Let us illustrate by means of a simple example why a Darboux $k$-presymplectic theorem does not exist for general foliated $k$-presymplectic manifolds. It is worth noting that Darboux coordinates for families of closed differential forms are, at the very last instance, a way of writing them in a coordinate system so that their associated coordinates are constant. The following theorem shows that this is impossible for general $k$-presymplectic manifolds.

\begin{theorem} \label{Th::NoGo}
Every rank-zero exact canonical foliated $k$-presymplectic structure is equivalent to $k$ exact differential two-forms on $Q$.
\end{theorem}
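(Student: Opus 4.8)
The plan is to exploit the rank-zero hypothesis to turn $S$ into the image of a section of $\bigoplus_{\alpha=1}^k\T^*Q$ over $Q$, and then to let the tautological (Liouville) one-form do all the work. Recall first that $\omega^\alpha=(\pi^\alpha)^*\omega_Q$ and that, writing $\theta_Q$ for the Liouville one-form on $\T^*Q$ (so that $\omega_Q=-\d\theta_Q$ in the paper's sign convention) and $\theta^\alpha=(\pi^\alpha)^*\theta_Q$, one has $\omega^\alpha=-\d\theta^\alpha$; in particular $\omega^\alpha_S=\jmath_S^*\omega^\alpha=-\d(\jmath_S^*\theta^\alpha)$, so \emph{every} canonical foliated $k$-presymplectic structure is already exact and the exactness hypothesis in the statement is merely a reminder.

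Next I would carry out the identification. ``Rank zero'' means that the fibre bundle $\pi|_S\colon S\to Q$ has $0$-dimensional fibres, hence $\dim S=\dim Q$ and $\pi|_S$ is a submersion between equidimensional manifolds, i.e.\ a local diffeomorphism. Locally around each point of $S$ (and globally under the paper's standing assumption that the relevant quotient/section structures exist, or whenever the covering $\pi|_S$ is trivial), $S$ is the image of a section $\sigma\colon Q\to\bigoplus_{\alpha=1}^k\T^*Q$, namely the inverse of $\pi|_S$; write $\sigma=(\sigma^1,\dots,\sigma^k)$ with $\sigma^\alpha=\pi^\alpha\circ\sigma\in\Omega^1(Q)$. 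Since $\jmath_S\circ\sigma\colon Q\to\bigoplus_{\alpha=1}^k\T^*Q$ is nothing but $\sigma$ regarded as a section, the diffeomorphism $\pi|_S$ carries each $\omega^\alpha_S=\jmath_S^*\omega^\alpha$ to $\sigma^*\omega^\alpha_S=(\jmath_S\circ\sigma)^*\omega^\alpha=\sigma^*\omega^\alpha$ on $Q$.

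Then I would compute $\sigma^*\omega^\alpha$ using the reproducing property of $\theta_Q$, namely $\beta^*\theta_Q=\beta$ for every $\beta\in\Omega^1(Q)$ viewed as a section $Q\to\T^*Q$:
$$
\sigma^*\omega^\alpha=-\d\bigl(\sigma^*\theta^\alpha\bigr)=-\d\bigl((\pi^\alpha\circ\sigma)^*\theta_Q\bigr)=-\d\bigl((\sigma^\alpha)^*\theta_Q\bigr)=-\d\sigma^\alpha.
$$
Hence $(S,\omega^1_S,\dots,\omega^k_S)$ is equivalent, via $\pi|_S$, to $(Q,-\d\sigma^1,\dots,-\d\sigma^k)$, i.e.\ to $k$ exact two-forms on $Q$. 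Conversely, any $k$ exact two-forms $\d\lambda^1,\dots,\d\lambda^k$ on $Q$ are realised in this way by taking $S$ to be the image of $(-\lambda^1,\dots,-\lambda^k)\colon Q\to\bigoplus_{\alpha=1}^k\T^*Q$, so the two notions genuinely coincide; this is precisely what obstructs a Darboux theorem, since the forms $\d\sigma^\alpha$ need not have locally constant coefficients in any coordinate system.

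\textbf{Main obstacle.} The computation is short, so the only point requiring care is the identification step: justifying that the rank-zero condition makes $S$ (locally, and globally under the paper's conventions or a triviality/simple-connectedness assumption on $\pi|_S$) the image of a section, and phrasing ``equivalent'' so that it refers to the concrete diffeomorphism $\pi|_S$ rather than to an abstract $\phi$ as in the definition of foliated $k$-presymplectic manifolds. One must also stay consistent with the sign convention relating $\omega_Q$ and $\d\theta_Q$; with the opposite convention one simply obtains $\sigma^*\omega^\alpha=\d\sigma^\alpha$. No genuinely hard step is expected.
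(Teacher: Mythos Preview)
Your proposal is correct and follows essentially the same route as the paper: both use the rank-zero hypothesis to identify $S$ with $Q$ via $\pi|_S$ (the paper simply asserts this diffeomorphism, while you justify it as a local diffeomorphism plus the paper's standing global conventions), and both realise the converse by taking $S$ to be the image of a section built from chosen potentials. Your use of the reproducing property $\beta^*\theta_Q=\beta$ is a clean coordinate-free replacement for the paper's local computation $\jmath_S^*\omega^\alpha=-\jmath_S^*\d(p^\alpha_i\d y^i)$, and your remark that exactness of $\omega^\alpha_S$ is automatic is a worthwhile observation the paper leaves implicit.
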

\begin{proof}
An exact canonical foliated $k$-presymplectic manifold $(S\subset \bigoplus_{\alpha=1}^k\cT Q,\omega_S^1,\ldots,\omega_S^k)$ 
 gives rise, as $S$ is diffeomorphic to $Q$ via $\pi|_S:S\subset 
 \bigoplus_{\alpha=1}^k\cT Q\rightarrow Q$, to a unique family of exact differential two-forms, $\omega^1_Q,\ldots,\omega^k_Q$, on $Q$ satisfying that $\pi|_S^*\omega_Q^1=\omega_S^1,\ldots,\pi|_S^*\omega_Q^k=\omega_S^k$. 

Conversely, $k$ exact presymplectic two-forms $\omega^1_Q,\ldots, \omega^k_Q$ on~$Q$ with potentials $\theta^1,\ldots,\theta^k$ give rise to a section  
$S = \{(q,\theta^1(q),\ldots,\theta^k(q)) \mid q\in Q\}$ of 
$\pi \colon \bigoplus_{\alpha=1}^k \T^*Q \rightarrow Q$. 
Note that 
$$\jmath_S^*\omega^\alpha=-\jmath_S^*\d(p^\alpha_i\d y^i)=-\d\theta^\alpha|_S=\pi|_S^*\omega^\alpha_Q\,,\qquad \alpha=1,\ldots,k\,.
$$
Then, $\omega^1_Q,\ldots,\omega^k_Q$ are exact and equivalent to a rank-zero canonical foliated $k$-presymplectic structure. 
\end{proof}
Since there is no way to put $k$ arbitrary closed differential two-forms on~$Q$ into a coordinate system 
so that all of them will have constant coefficients, 
 there will be no general Darboux theorem for foliated $k$-presymplectic manifolds, and thus there is no Darboux theorem for $k$-presymplectic manifolds in general.
 
Theorem \ref{Th::NoGo} can be considered as an extreme case of canonical foliated $k$-presymplectic manifold. For the case of a fibration $\pi|_S:S
\rightarrow Q$ of rank one, it is simple to find new examples where there will be no Darboux coordinates. Assume the simple case of a fibration of rank one given by a submanifold $S\subset \bigoplus_{\alpha=1}^2\cT \mathbb{R}^2$ onto $\mathbb{R}^2$. Since $S$ has dimension three, the two differential forms $\omega_S^1,\omega_S^2$ can be assumed to have rank two and non-trivial common intersection of their kernels. In such a case, they are proportional.  One of them can always be put into canonical form for certain variables, because is presymplectic. Since they are proportional and due to the closeness condition, they depend only on two variables.  Hence, to put them in canonical form with some Darboux variables amounts to putting two different volume forms on $\mathbb{R}^2$ in canonical form for the same Darboux variables, which is impossible. 

\begin{example}\label{Ex::Counter}
Let us describe in more detail a more complex example of a foliated 2-presymplectic manifold that does not admit Darboux coordinates. Consider $\bigoplus_{\alpha=1}^2\cT \mathbb{R}^2$ and the fibration of the submanifold $S$ onto $\mathbb{R}^2$ with rank one of the form
$$
S=\{(p^{(1)}_1(\lambda,y^1,y^2)\d y^1+p^{(1)}_2(\lambda,y^1,y^2)\d y^2,p^{(2)}_1(\lambda,y^1,y^2)\d y^1+p^{(2)}_2(\lambda,y^1,y^2)\d y^2):\lambda,y^1,y^2\in \mathbb{R}\}.
$$
In particular, consider 
$$
p_1^{(1)}=\lambda\,,\qquad p_2^{(1)}=0\,,\qquad p_1^{(2)}=f(\lambda,y^1)\,,\qquad p_2^{(2)}=0\,,
$$
for a certain function  $f(\lambda,y^1)$ such that $\partial f/\partial\lambda$ is different from the constant functions zero and one. Hence, $\omega^1_S=\d\lambda\wedge \d y^1$ and $\displaystyle\omega^2_S=\frac{\partial f}{\partial \lambda} (\lambda,y^1)\d\lambda\wedge \d y^1$,
which are closed, proportional, have rank-one kernel and cannot be put into a canonical form for canonical coordinates because $\omega^1_S,\omega^2_S$ amount to two different volume forms on $\mathbb{R}^2$.      
\end{example}


There are several manners of defining a $k$-presymplectic manifold. The following one offers a possibility.

\begin{definition}
\label{dfn:k-presymplectic-manifold}
Let $M$ be a $(n(k+1)-m)$-dimensional manifold, with $0\leq m\leq nk$. 
A {\it $k$-presymplectic structure} on $M$ is a family 
$(\omega^1,\dotsc, \omega^k,V)$, 
where $V$ is an $r$-dimensional integrable distribution 
and $\omega^1,\dotsc, \omega^k$ are closed differential two-forms on $M$ 
with $\rk\omega^\alpha = 2r_\alpha$ and $r = \sum_{\alpha=1}^k r_\alpha$, 
where $1\leq r_\alpha\leq n$, 
satisfying that
    $$ \restr{\omega^\alpha}{V\times V} = 0\,,\qquad \alpha=1,\ldots, k\,. $$
A manifold $M$ endowed with a $k$-presymplectic structure is called a {\it $k$-presymplectic manifold}.
\end{definition}

We would expect to obtain for every $k$-presymplectic structure $(M,\omega^1,\ldots,\omega^k,V)$, where $\rk \omega^\alpha=2r_\alpha$, with $1\leq r_\alpha\leq n$
, and every $x\in M$ a local coordinate system $\{y^i,p_i^\alpha\}$ around $x$ so that
$$
\omega^\alpha=\d y^{i^\alpha_j}\wedge \d p^{\alpha}_{i^\alpha_j}\,,\qquad \alpha=1,\ldots,k\,,
$$
for certain $i^\alpha_j\in \{1,\ldots,n\}$ for $j=1,\ldots, r_\alpha$ for every $\alpha=1,\ldots,k$. Nevertheless, Example \ref{Ex::Counter} represents a counterexample for the existence of Darboux coordinate system for $k$-presymplectic structures. 

Contrary to previous examples, we will give conditions ensuring that a $k$-presymplectic manifold admits Darboux coordinates. Indeed, the manifold $S$ is three-dimensional, while $k=2$. The associated presymplectic forms have rank two. The distribution $V$ is then two-dimensional and generated, for instance, by the vector fields $\langle \partial/\partial \lambda,\partial/\partial y^2\rangle$. Then, $n$ and $m$ can be fixed to be two and three.  

It is worth noting that for a $k$-presymplectic structure on $M$, any Riemannian metric $g$ on $M$ allows one to obtain a decomposition of a subspace $E\subset \T_xM$ as a direct sum of subspaces
\begin{equation}\label{eq:Decom1}
E^{\kappa_1,\ldots,\kappa_k}=E\cap \left(\bigcap_{\alpha=1}^k (\ker \omega_x^\alpha)^{\kappa_\alpha}\right)\,,
\end{equation}
where $\kappa_\alpha\in \{0,1\}$, while $(\ker \omega_x^\alpha)^0=\ker \omega_x^\alpha$ and  $(\ker \omega_x^\alpha)^1=(\ker \omega_x^\alpha)^{\perp_g}$, where $\perp_g$ is the orthogonal relative to the introduced metric $g$. The main aim of this decomposition is to divide $\T_xM$ into two subspaces, $V,S$, given by direct sums of the subspaces in \eqref{eq:Decom1} in such a manner that  $\omega^\alpha(V)$ and $\omega^\alpha(S)$ have  rank $r_\alpha$, while $\omega^\alpha(V)\cap \omega^\alpha(S)=0$ for $\alpha=1,\ldots,k$. As in the case of $k$-symplectic linear spaces, one can now prove a $k$-presymplectic linear space Darboux theorem. 

\begin{lemma} {\bf ($k$-presymplectic linear Darboux theorem)} Given a  $k$-presymplectic structure
$(\omega^1,\ldots,\omega^k,V)$  on $M$, 
where $\rk\omega^\alpha=2r^\alpha$ for $\alpha=1,\ldots,r$. Let $D=\bigcap_{\alpha=1}^k\ker\omega^\alpha$ have rank $d$ and let $\rk V=r+d = \sum_{\alpha=1}^k r_\alpha+d$ be so that
\begin{equation}\label{eq:Condkpresym}
\dim V_\alpha=r_\alpha,\quad V=D \oplus \bigoplus_{\beta=1}^k V_\beta\,,\quad D+V_\alpha=V\cap \left(\bigcap_{\beta\neq \alpha}\ker \omega^\beta\right)\,,\,\,(k\neq 1)\quad \alpha=1,\ldots,k\ ,
\end{equation}
and
$\dim M=n+r+d$.
Then, at every $\cT_xM$, for $x\in M$, one can set a basis of the form $\{e^1,\ldots,e^n;e^\alpha_{\mu^\alpha_j},v^1,\dotsc,v^d\}$, with $\mu^\alpha_j\in I_\alpha\subset\{1,\dotsc,n\}$ and $\abs{I_\alpha} = r_\alpha$ with $\alpha=1,\ldots,k$, of $\cT_xM$ such that 
$$
\omega_x^\alpha=\sum_{j=1}^{r_\alpha}e^{\mu^\alpha_j}\wedge e^\alpha_{\mu^\alpha_j}\,,\qquad \alpha=1,\ldots,k\,,\qquad D_x=\langle v_1,\ldots,v_d\rangle,\qquad V_{\alpha x}=\left\langle e_\alpha^{\mu^\alpha_j}\right\rangle .
$$
\end{lemma}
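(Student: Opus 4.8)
The plan is to mimic, modulo the extra kernel direction $D$, the proof of the $k$-symplectic linear Darboux theorem (Theorem~\ref{thm:k-symp-puntual}). All structures refer to the point $x$, which will be omitted. First I would pass to the quotient: set $D=\bigcap_{\alpha=1}^k\ker\omega^\alpha$ and $W=\T_xM/D$, and observe that each $\omega^\alpha$ descends to a well-defined two-form $\bar\omega^\alpha$ on $W$, since $D\subset\ker\omega^\alpha$ for every $\alpha$. Moreover $\bigcap_{\alpha=1}^k\ker\bar\omega^\alpha=\{0\}$ on $W$ by construction, and $\bar V:=V/D$ is a subspace of $W$ of dimension $r=\sum_\alpha r_\alpha$ (using $V=D\oplus\bigoplus_\beta V_\beta$ from \eqref{eq:Condkpresym}) with $\bar\omega^\alpha|_{\bar V\times\bar V}=0$ and $\rk\bar\omega^\alpha=2r_\alpha$.

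The second step is to run the argument of Theorem~\ref{thm:k-symp-puntual} inside $W$, being careful that here $\rk\bar\omega^\alpha=2r_\alpha$ may be strictly less than $2n$. Choosing a complement $\bar W$ to $\bar V$ in $W$ of dimension $n$, and using condition \eqref{eq:Condkpresym}, one checks that $\bar\omega^\alpha(\bar W)$ has rank $r_\alpha$ and that the images $\bar\omega^1(\bar W),\ldots,\bar\omega^k(\bar W)$ together with $\bar V^\circ$ span $W^*$; since $\bigcap_\alpha\ker\bar\omega^\alpha=0$, the restriction of $\bar\omega^\alpha$ to $\bar V$ also has rank $r_\alpha$, and $\bar V_\alpha:=\bigcap_{\beta\neq\alpha}\ker\bar\omega^\beta\cap\bar V$ has dimension $r_\alpha$ with $\bar V=\bigoplus_\alpha\bar V_\alpha$ — this is exactly the translation of \eqref{eq:Condkpresym} to $W$. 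On each $\bar V_\alpha$ the form $\bar\omega^\alpha$ is a perfect pairing with a complementary $r_\alpha$-dimensional subspace of $\bar W$; picking dual bases one obtains covectors $e^{\mu^\alpha_j}$ (indexed by a set $I_\alpha\subset\{1,\ldots,n\}$ of size $r_\alpha$) and $f^\alpha_{\mu^\alpha_j}$ with $\bar\omega^\alpha=\sum_j e^{\mu^\alpha_j}\wedge f^\alpha_{\mu^\alpha_j}+c^\alpha_{jl}e^{\mu^\alpha_j}\wedge e^{\mu^\alpha_l}$; absorbing the symmetric-in-appearance correction term into $e^\alpha_{\mu^\alpha_j}:=f^\alpha_{\mu^\alpha_j}+c^\alpha_{jl}e^{\mu^\alpha_l}$ — the same trick as in \eqref{eq:Form1} — brings $\bar\omega^\alpha$ to the stated normal form on $W$, with $\bar V_{\alpha}=\langle e_\alpha^{\mu^\alpha_j}\rangle$ in the dual basis.

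Finally I would lift back: pull the covectors $e^{\mu^\alpha_j},e^\alpha_{\mu^\alpha_j}$ on $W$ back to $\cT_xM$ via the quotient projection $\T_xM\to W$, complete $\{e^1,\ldots,e^n\}$ (the $e^i$ for $i\notin\bigcup_\alpha I_\alpha$ chosen arbitrarily so that all the $e^i$ together with the pulled-back $e^\alpha_{\mu^\alpha_j}$ and a basis $v^1,\ldots,v^d$ of $D^\circ$-complement form a basis of $\cT_xM$), and choose $v^1,\ldots,v^d$ to be covectors dual to a basis $v_1,\ldots,v_d$ of $D$. Since the pullback of a two-form along $\T_xM\to W$ does not change its expression as a wedge of pulled-back covectors, $\omega^\alpha_x=\sum_{j=1}^{r_\alpha}e^{\mu^\alpha_j}\wedge e^\alpha_{\mu^\alpha_j}$ holds on $\cT_xM$, and by construction $D_x=\langle v_1,\ldots,v_d\rangle$ while $V_{\alpha x}=\langle e_\alpha^{\mu^\alpha_j}\rangle$, with $V_x=D_x\oplus\bigoplus_\alpha V_{\alpha x}$ as required.

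\textbf{Main obstacle.} The delicate point is not the quotient-and-lift mechanism but checking that hypothesis \eqref{eq:Condkpresym} is precisely what makes the rank bookkeeping in $W$ work out: one must verify that $\dim\bar V_\alpha=r_\alpha$ and $\bar V=\bigoplus_\alpha\bar V_\alpha$ genuinely follow from $V=D\oplus\bigoplus_\beta V_\beta$ together with $D+V_\alpha=V\cap\bigcap_{\beta\neq\alpha}\ker\omega^\beta$, and that the $r_\alpha$ images $\bar\omega^\alpha(\bar W)$ stay in direct sum with $\bar V^\circ$. This is where the proof of Theorem~\ref{thm:k-symp-puntual} used $\bigcap_\alpha\ker\omega^\alpha=0$ and the dimension count $\dim M=n(k+1)$; here those are replaced by the explicit splitting conditions in \eqref{eq:Condkpresym} and $\dim M=n+r+d$, and one has to confirm the analogous inequalities become equalities. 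The $k=1$ case should be handled separately (it reduces to the presymplectic linear Darboux statement), exactly as in the $k$-symplectic proof.
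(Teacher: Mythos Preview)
Your quotient-and-lift strategy is a clean alternative to the paper's direct approach (which stays in $\T_xM$, takes the complement $S=V^{\perp_g}$ via a Riemannian metric, and then introduces a second metric $g^*$ on $S_*=\sum_\alpha\omega^\alpha(V)\subset V^\circ$ to split $V^\circ$ into pieces $S_*^{\kappa_1,\ldots,\kappa_k}=\bigcap_\alpha (S_*^\alpha)^{\kappa_\alpha}$ meant to be simultaneously adapted to all the $\omega^\alpha$). Unfortunately both approaches founder at the same point --- precisely the one you flag as the main obstacle: the claim that one can choose a \emph{single} family $\{e^1,\ldots,e^n\}$ such that every $\omega^\alpha$ is expressed using only $r_\alpha$ covectors drawn from this common list. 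You assert this by ``picking dual bases'' for each $\alpha$ separately and silently assuming the results merge into one basis of $\bar V^\circ$; the paper asserts it via the $g^*$-orthogonal decomposition of $S_*$. Neither argument goes through in general.

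Here is a concrete obstruction. Take $k=3$, $n=2$, $r_1=r_2=r_3=1$, $d=0$ on $M=\R^5$ with $\omega^1=\d y^1\wedge\d p^1$, $\omega^2=\d y^2\wedge\d p^2$, $\omega^3=(\d y^1+\d y^2)\wedge\d p^3$, and $V=\langle\partial_{p^1},\partial_{p^2},\partial_{p^3}\rangle$. One checks $D=0$, $V_\alpha=\langle\partial_{p^\alpha}\rangle$, and all hypotheses \eqref{eq:Condkpresym} together with $\dim M=n+r+d$ hold. Since each $\omega^\alpha$ has rank $2$, the promised normal form forces it to be a single wedge $e^{\mu^\alpha}\wedge e^\alpha_{\mu^\alpha}$ with $\mu^\alpha\in\{1,2\}$; the requirement $V_\alpha=\langle e_\alpha^{\mu^\alpha}\rangle$ then pins $e^1,e^2\in\langle\d y^1,\d y^2\rangle$, and contracting $\omega^\alpha$ with $\partial_{p^\alpha}$ shows that $e^{\mu^\alpha}$ must be proportional to $\d y^1$, $\d y^2$, $\d y^1+\d y^2$ respectively --- three distinct lines in a $2$-plane, which cannot all be coordinate directions for a single basis $\{e^1,e^2\}$. (Correspondingly, in the paper's scheme every piece $S_*^{\kappa_1,\kappa_2,\kappa_3}$ vanishes for the standard metric, so its claimed direct-sum decomposition of $S_*$ is empty.) Thus the lemma as stated appears to be false; what your quotient argument legitimately yields is $\bar\omega^\alpha=\sum_j\theta^{\alpha j}\wedge e^\alpha_j$ with $\theta^{\alpha 1},\ldots,\theta^{\alpha r_\alpha}\in\bar V^\circ$ independent for each fixed $\alpha$, but without the extra claim that the $\theta^{\alpha j}$ are drawn from a basis common to all~$\alpha$.
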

\begin{proof} Note that $\dim M=n+r+d$. Since $D=\bigcap_{\alpha=1}^k\ker \omega^\alpha$ has rank $d$, one has that 
\begin{equation}\label{con-pre}
D_x^\circ=\Ima \omega^1_x+\ldots+\Ima\omega^k_x\,,\qquad  \forall x\in M\,,
\end{equation}
is such that $\rk D^\circ =n+r$. 
Since $\omega^\beta|_{V\times V}=0$, it follows that $\omega^\beta(V)\subset V^\circ$ for every $\beta$. We have
$$
\omega^1(V)+\ldots+\omega^k(V)\subset V^\circ.
$$
Note that $\rk V^\circ =n$. From the second and third condition in \eqref{eq:Condkpresym}, it follows that $V_\alpha\cap\ker\omega^\alpha=0$. Moreover, one has that $\rk \omega^\alpha(V_\alpha)=r_\alpha=\rk \omega^\alpha(V)$ for $\alpha=1,\ldots,k$. Consider the supplementary $S=V^{\perp_g}$  to $V$. Then, 
$\rk V^{\perp_g}=\dim M - r-d = n$ and $\rk \omega_x^\beta(S_x)\leq n$ for every $x\in M$.  Due to \eqref{con-pre} and the above, one has that 
$$
\omega^1(V^{\perp_g})+\ldots+\omega^k(V^{\perp_g})
$$
is a distribution of rank $r$ at least.  By our decomposition, every $\alpha$ allows us to divide $V^{\perp_g}$  into two spaces in the form $V^{\perp_g}=\Upsilon_\alpha\oplus \left(\ker \omega^\alpha\cap V^{\perp_g}\right)$, where $\Upsilon_\alpha$ has rank $r_\alpha$ because $\ker \omega^\alpha=n+r+d-2r_\alpha$ and $\ker\omega^\alpha\cap V=r+d-r_\alpha$. Then,  $\omega^\alpha(V^{\perp_g})$ is equal to the image of a subspace of rank $r_\alpha$ of $V^{\perp_g}$ and it therefore has rank $r_\alpha$  and $\omega^\alpha(S)\cap \omega^\alpha(V)=0$. Then,
\begin{equation}\label{eq:MaxDec}
\rank(\omega^1(V^{\perp_g})+\ldots+\omega^k(V^{\perp_g}))=r\,,\quad \omega^1(V)+\ldots+\omega^k(V)= V^\circ\,.
\end{equation}
Note that $\ker \omega^\alpha=n+d+r-2r_\alpha$ and $\omega^\alpha(V)=\omega^\alpha(V_\alpha\oplus(\ker\omega^\alpha \cap V))$. Due to the second expression in \eqref{eq:MaxDec}, the sum of the codistributions $S_*^\alpha=\omega^\alpha(V)$ of $\cT M$ for $\alpha=1,\ldots,k$ has rank $n$, but they do not need to be in direct sum. A non-degenerate contravariant symmetric tensor, $g^*$. on $S_*=S_*^1+\ldots+S_*^k$  can be used to give a decomposition of it into subspaces in direct sum of the form 
$$
S_*^{\kappa_1,\ldots,\kappa_k}=\bigcap_{\alpha=1}^kS^{\kappa_{\alpha}}_{\alpha*},
$$
where $\kappa_\alpha\in \{0,1\}$ for $\alpha=1,\ldots,k$, while $S^1_{\alpha*}=S^{\alpha}_*$ and $S^0_\alpha=(S^\alpha_*)^{\perp_{g^*}}$, namely the orthogonal in $S_*$ of $S_*^\alpha$ relative to $g^*$. 
Take a basis of $S_*$ associated with our decomposition. For the elements of such a basis spanning $S_*^\alpha$, there will be unique elements in $V_\alpha$ whose image under $\omega^\alpha$ give minus the corresponding basis in $S_*^\alpha$. Take a supplementary to $S_*$ in $\cT M$, of dimension $d+r$, dual to a basis adapted to the decomposition of $V$ and vanishing on $V^{\perp_g}$. It is worth noting that we have a decomposition
$$
\T M=\left[\Upsilon_\alpha \oplus (\ker \omega^\alpha \cap V^{\perp_g})\right]\oplus \left[V_\alpha\oplus \left(\bigoplus_{\beta\neq \alpha}V_\beta\right)\oplus D\right]
$$
and a dual one in
$
S_*\oplus (V^{\perp_g})^\circ.  
$
In such a basis, the form of $\omega^\alpha$ goes back to \eqref{eq:Form1} and the same technique in Theorem \ref{thm:k-symp-puntual} gives the canonical form for every $\omega^\alpha$ with $\alpha=1,\ldots,k$. Finally, if $w_1,\ldots,w_d$ is a basis of $D$ dual to the one chosen in $\cT_xM$, one has that
$$
\omega^\beta=\sum_{j=1}^{r_\beta}
e^{\mu_j^\beta}\wedge e^\beta_{\mu_j^\beta}\,,\qquad V_\beta=\left\langle e_\beta^{\mu^\beta_j}\right\rangle,\qquad \beta=1,\ldots,k. 
$$
\end{proof}
As proved above, depending on their exact definition, $k$-presymplectic manifolds need not have a Darboux theorem (whatever this means, because we can have different ways of defining such an object). That is why we hereafter a definition of $k$-presymplectic manifold ensuring the existence of a particular case of $k$-presymplectic Darboux theorem. This is done by assuming the existence of certain integrable distribution with particular properties. 
\begin{definition}
A \textit{$k$-presymplectic manifold} $(M,\omega^1,\ldots,\omega^k,V)$ is a $k$-presymplectic manifold such that $\dim M=n+r+d$ where $d=\rk\bigcap_{\alpha=1}^k\ker \omega^\alpha$ and $\rk \omega^\alpha=2r^\alpha$, the $V$ is an integrable distribution such that $\omega|_{V\times V}=0$  of rank $r+d$ and there are  integrable distributions $\bigoplus_{\alpha=1 }^kV_\alpha$, $V_1,\ldots,V_k, D$ so that
  $$
  V=
  \bigoplus_{\alpha=1}^kV_\alpha \oplus D,\qquad D=\bigcap_{\alpha=1}^k\ker \omega^\alpha,
  \qquad D+V_\beta=\bigcap_{\beta\neq \alpha}\ker \omega^\alpha\cap V\ \  (k\neq 1)\,,\qquad \beta=1,\ldots k\,.$$
\end{definition}
\begin{lemma}
Given a $k$-presymplectic manifold $(M,\omega^1,\ldots,\omega^k, V)$, the distributions $V^\alpha$, with $\alpha=1,\ldots,k$, satisfy that every $x\in M$ admits a coordinated neighbourhood with coordinates 
$$\{y^1,\ldots,y^n,z^1,\ldots,z^d,y^\alpha_{1},\dotsc,y_{r_\alpha}^\alpha\}\,,\qquad \alpha=1,\ldots,k\,,$$
on a neighbourhood of $x$ so that
$$
V_\alpha = 
\left\langle \frac{\partial}{\partial y^\alpha_1},\ldots,\frac{\partial}{\partial y^\alpha_{r_\alpha}} \right\rangle \,,\qquad \alpha=1,\ldots,k\,,\qquad D=\left\langle \frac{\partial}{\partial z^1},\ldots,\frac{\partial}{\partial z^d}\right\rangle\,.
$$
\end{lemma}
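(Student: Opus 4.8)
\emph{Proof strategy.} We mimic the proof of Lemma~\ref{lem:first-integrals}: the coordinate functions will be produced as suitable families of common first integrals of the integrable distributions carried by the $k$-presymplectic structure, and the real work is to check their joint functional independence and that $V_\alpha$ and $D$ become coordinate distributions in the resulting chart. Write $\rk\omega^\alpha=2r_\alpha$, $r=\sum_\alpha r_\alpha$, $d=\rk D$ with $D=\bigcap_{\alpha=1}^k\ker\omega^\alpha$, $\dim M=n+r+d$ and $\dim V_\alpha=r_\alpha$; the constancy of all these ranks, and of the ranks of the $\ker\omega^\alpha$ and of the intersections used below, follows from the $k$-presymplectic linear Darboux lemma applied at every point. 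Introduce the auxiliary distributions $W_\beta:=\bigoplus_{\alpha\neq\beta}V_\alpha\oplus D$ for $\beta=1,\dots,k$. Using $D=\bigcap_\alpha\ker\omega^\alpha$, the hypothesis $D+V_\beta=\bigcap_{\alpha\neq\beta}\ker\omega^\alpha\cap V$, and the directness of the sum $V=\bigoplus_\alpha V_\alpha\oplus D$, one first verifies the elementary identities $W_\beta=\ker\omega^\beta\cap V$, $\bigcap_{\beta=1}^k W_\beta=D$, $\bigcap_{\beta\neq\alpha}W_\beta=V_\alpha\oplus D$ and $V_\alpha=(V_\alpha\oplus D)\cap\bigoplus_\gamma V_\gamma$ (these are immediate from the direct-sum decomposition; for $k=1$ some of them collapse to trivialities). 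Since $\omega^\beta$ is closed of constant rank, $\ker\omega^\beta$ is integrable (Frobenius); as $V$ is integrable by hypothesis, the intersection $W_\beta=\ker\omega^\beta\cap V$, being involutive and of constant rank, is integrable. Thus $V$, the $W_\beta$, $\bigoplus_\alpha V_\alpha$ and $D$ are all regular integrable distributions.

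Next I would build the chart. Choose functionally independent first integrals $y^1,\dots,y^n$ of $V$, so that $\langle\d y^1,\dots,\d y^n\rangle=V^\circ$ near $x$. Since $W_\beta\subseteq V$, the $y^i$ are also first integrals of $W_\beta$, and by the Frobenius theorem (together with the standard fact that an independent family of first integrals of an integrable distribution extends to one whose differentials span the annihilator) we may add $r_\beta$ functions $y^\beta_1,\dots,y^\beta_{r_\beta}$ with $\langle\d y^i,\d y^\beta_j\rangle=W_\beta^\circ$. Likewise, since $\bigoplus_\alpha V_\alpha$ is integrable and contained in $V$, we may add $d$ functions $z^1,\dots,z^d$ with $\langle\d y^i,\d z^l\rangle=(\bigoplus_\alpha V_\alpha)^\circ$. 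After shrinking, all these functions are defined around $x$.

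It then remains to check independence and to identify the distributions. From $W_\beta^\circ=\langle\d y^i\rangle+\langle\d y^\beta_j\rangle$ and $\bigcap_\beta W_\beta=D$ we get $\langle\{\d y^i\}\cup\{\d y^\beta_j\}\rangle=\sum_\beta W_\beta^\circ=D^\circ$, which has dimension $n+r$, so these $n+r$ one-forms are independent; similarly $\{\d y^i\}\cup\{\d z^l\}$ span $(\bigoplus_\alpha V_\alpha)^\circ$ and are independent. If a linear combination of all of $\d y^i,\d y^\beta_j,\d z^l$ vanishes, then isolating the $\d z^l$-terms shows that they sum to a covector lying in $D^\circ\cap(\bigoplus_\alpha V_\alpha)^\circ=(D+\bigoplus_\alpha V_\alpha)^\circ=V^\circ=\langle\d y^i\rangle$, so their coefficients vanish by independence of $\{\d y^i\}\cup\{\d z^l\}$, and then the remaining coefficients vanish too; hence $\{y^i,y^\alpha_j,z^l\}$ is a coordinate system about $x$. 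In this chart, $D^\circ=\langle\{\d y^i\}\cup\{\d y^\alpha_j\}\rangle$ gives $D=\langle\partial/\partial z^1,\dots,\partial/\partial z^d\rangle$, and $(\bigoplus_\alpha V_\alpha)^\circ=\langle\{\d y^i\}\cup\{\d z^l\}\rangle$ gives $\bigoplus_\alpha V_\alpha=\langle\partial/\partial y^\gamma_j\rangle$. Finally, using $\bigcap_{\beta\neq\alpha}W_\beta=V_\alpha\oplus D$ we obtain $(V_\alpha\oplus D)^\circ=\sum_{\beta\neq\alpha}W_\beta^\circ=\langle\{\d y^i\}\cup\{\d y^\beta_j:\beta\neq\alpha\}\rangle$, i.e. $V_\alpha\oplus D=\langle\partial/\partial y^\alpha_j\rangle\oplus\langle\partial/\partial z^l\rangle$ as a coordinate subspace; intersecting with $\bigoplus_\gamma V_\gamma=\langle\partial/\partial y^\gamma_j\rangle$ and invoking $V_\alpha=(V_\alpha\oplus D)\cap\bigoplus_\gamma V_\gamma$ yields $V_\alpha=\langle\partial/\partial y^\alpha_1,\dots,\partial/\partial y^\alpha_{r_\alpha}\rangle$.

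The step I expect to require most care is this final package of verifications: the joint functional independence of $\{y^i,y^\alpha_j,z^l\}$ and, above all, the fact that $V_\alpha$ is \emph{exactly} the coordinate distribution $\langle\partial/\partial y^\alpha_j\rangle$ rather than some other complement of $D$ inside $V_\alpha\oplus D$. That identification forces one to have arranged both $V_\alpha\oplus D$ and $\bigoplus_\gamma V_\gamma$ to be coordinate subspaces simultaneously, and to use the identity $V_\alpha=(V_\alpha\oplus D)\cap\bigoplus_\gamma V_\gamma$, which in turn relies on the directness of $V=\bigoplus_\alpha V_\alpha\oplus D$. A subsidiary technical point is the constancy of the ranks of $\ker\omega^\alpha$, $V_\alpha$, $D$ and their intersections, for which I would invoke the $k$-presymplectic linear Darboux lemma pointwise.
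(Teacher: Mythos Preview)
Your proof is correct and follows essentially the same approach as the paper: both produce the $y^i$ as first integrals of $V$, the $z^l$ as first integrals of $\bigoplus_\gamma V_\gamma$, and the $y^\beta_j$ as first integrals of $W_\beta=\ker\omega^\beta\cap V=\bigoplus_{\alpha\neq\beta}V_\alpha\oplus D$, then assemble these into a chart. Your write-up is in fact more careful than the paper's on the bookkeeping---you explicitly verify joint independence via the annihilator identities $\sum_\beta W_\beta^\circ=D^\circ$ and $D^\circ\cap(\bigoplus_\alpha V_\alpha)^\circ=V^\circ$, and you isolate $V_\alpha$ cleanly via $V_\alpha=(V_\alpha\oplus D)\cap\bigoplus_\gamma V_\gamma$, whereas the paper simply asserts the coordinate description of $V_\alpha$ as an intersection of kernels of the $\d y^i,\d z^l,\d y^\beta_j$ for $\beta\neq\alpha$.
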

\begin{proof} 
Let $y^1,\ldots,y^n$ be common functionally independent first-integrals for 
 all vector fields taking values in $V$. Since $D$ is a regular distribution of rank $d$ given by the intersection of kernels of the closed forms $\omega^1,\ldots,\omega^k$, it is  integrable. It is assumed that $\bigoplus_{\alpha=1}^kV_\alpha$ is integrable. Hence, $V_1\oplus \ldots \oplus V_k$ has common first-integrals $z^1,\ldots,z^d$ such that $\d z^1\wedge \ldots \wedge \d z^d\wedge \d y^1\wedge \ldots\wedge \d y^n\neq 0$.  If $k=1$, the result of our lemma easily follows. Assume that $k>1$. Given different integers $\alpha_1,\ldots,\alpha_{k-1}\in\{1,\ldots,k\}$, one has that, 
$$
V_{\alpha_1}\oplus\dotsb\oplus V_{\alpha_{k-1}}\oplus D=\ker \omega^\beta\cap V\,,
$$
where $\beta$ is the only number in $\{1,\ldots,k\}$ not included in $\{\alpha_1,\ldots,\alpha_{k-1}\}$. 
Hence, the distribution $V_{\alpha_1}\oplus\ldots\oplus V_{\alpha_{k-1}}\oplus D$ has corank $r_\beta$, it is integrable, and the vector fields taking values in it have $r_\beta$ common local first-integrals $y^\beta_1,\ldots,y^\beta_{r_\beta}$ such that 
$$
\d z_1\wedge\ldots\wedge \d z_d\wedge \d y^\beta_1\wedge \ldots \wedge \d y^\beta_{r_\beta}\wedge \d y^1\wedge \ldots\wedge \d y^n\neq 0.
$$
By construction, 
$\{y^1_1,\ldots,y^1_{r_1},\ldots,y^k_1,\ldots,y^k_{r_k},z^1,\ldots,z^d,y^1,\ldots,y^n\}$ becomes a local coordinate system on $M$ and 
$$
V_\alpha=\left(\bigcap_{i=1}^d\ker\d z^i\right)\cap\left(\bigcap_{i=1}^n\ker\d y^i\right)\cap\bigcap_{\substack{\beta\neq \alpha\\i=1,\ldots,r_\beta}}\ker \d y^\beta_i\,.
$$
Moreover, $\dfrac{\partial}{\partial y^\beta_i}$ with $i=1,
\ldots,r_\beta$ vanish on all coordinates $y^{\alpha}_j$ with $\alpha\neq \beta$ and $j=1,\ldots,r_\alpha$.  
Hence, 
$$
\left\langle\frac{\partial}{\partial y^\beta_1},\ldots,\frac{\partial}{\partial y^\beta_{r_\beta}}\right\rangle=V_\beta\,,\qquad \beta=1,\ldots,k\,,
$$
and
$$
\left\langle \frac{
\partial}{\partial z^1},\ldots,\frac{\partial}{\partial z^d}\right\rangle=D\,.
$$
\end{proof}
Once the above is proved, the following theorem is immediate. One only has to slightly adapt Theorem \ref{thm:Darboux k-simp} by considering that $\rk V_\alpha=r_\alpha$ for $\alpha=1,\ldots,k$ and to restrict $\omega^\alpha$  to the integral submanifolds of $V_\alpha\oplus \Upsilon_\alpha$, which have dimension $2r^\alpha$, where $\omega^\alpha$ becomes symplectic.

\begin{theorem}[$k$-presymplectic Darboux theorem]\label{Darboux k-presimp}
Let $(M,\omega^1,\dotsc, \omega^k,V)$ be a $k$-presymplectic manifold such that
$\rank\omega^\alpha=2r_\alpha$, with $1\leq r_\alpha\leq n$. The dimension of $M$ is $n+r+d$.
For every point $x\in M$, there exist local coordinates $\{y^i,y^\alpha_{\mu^\alpha_j},z^j\}$, with $1\leq i\leq n$, $\mu^\alpha_j\in I_\alpha\subseteq\{ 1,\dotsc, n\}$, $\vert I_\alpha\vert=r_\alpha$, $1\leq j\leq r_\alpha$ and $1\leq\alpha\leq k$, such that
\[
\omega^\alpha=\sum_{j=1}^{r_\alpha}\d y^{\mu^\alpha_j}\wedge\d y^\alpha_{\mu^\alpha_j},\quad
V_\alpha=\left\langle\displaystyle\frac{\partial}{\partial y^\alpha_{\mu^\alpha_j}}\right\rangle
\, ,\quad \alpha=1,\ldots,k\,, \quad 
\displaystyle\bigcap_{\alpha=1}^k\ker \omega^\alpha=\left\langle\displaystyle\frac{\partial}{\partial z^j}\right\rangle \,. \]
    \end{theorem}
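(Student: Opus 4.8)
\emph{Proof proposal.} The plan is to follow closely the proof of the $k$-symplectic Darboux theorem, Theorem~\ref{thm:Darboux k-simp}, replacing the rôle of the $k$-symplectic linear Darboux theorem (Theorem~\ref{thm:k-symp-puntual}) by the $k$-presymplectic linear Darboux lemma proved above, and keeping track of the extra directions spanned by $D=\bigcap_{\alpha=1}^k\ker\omega^\alpha$. First I would invoke the preceding lemma to obtain, on a neighbourhood of $x$, a coordinate system $\{y^1,\dotsc,y^n,z^1,\dotsc,z^d,y^\alpha_1,\dotsc,y^\alpha_{r_\alpha}\}$ adapted to the distributions involved, so that $V_\alpha=\langle\partial/\partial y^\alpha_1,\dotsc,\partial/\partial y^\alpha_{r_\alpha}\rangle$, $D=\langle\partial/\partial z^1,\dotsc,\partial/\partial z^d\rangle$, and the $y^i$ are common first integrals of all vector fields taking values in $V$. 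Using the injections $I_\alpha\subset\{1,\dotsc,n\}$ with $\abs{I_\alpha}=r_\alpha$ furnished by the linear lemma, I would relabel the fibre coordinates of $V_\alpha$ as $y^\alpha_{\mu^\alpha_j}$, intended to be conjugate to $y^{\mu^\alpha_j}$.

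Next I would compute the form of each $\omega^\alpha$ in these coordinates. Since $D\subset\ker\omega^\alpha$ and $D=\langle\partial/\partial z^j\rangle$, the form $\omega^\alpha$ has no $\d z^j$ component at all; since $\restr{\omega^\alpha}{V\times V}=0$, there are no $\d y^\beta_i\wedge\d y^\gamma_l$ terms; and since $V_\beta\subset\ker\omega^\alpha$ for $\beta\neq\alpha$, the only fibre directions appearing are those of $V_\alpha$. Hence $\omega^\alpha=f^{j}_i\,\d y^i\wedge\d y^\alpha_j+g_{il}\,\d y^i\wedge\d y^l$ for functions $f^j_i,g_{il}$, with $i,l=1,\dotsc,n$ and $j=1,\dotsc,r_\alpha$. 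Imposing $\d\omega^\alpha=0$ forces, exactly as in the proof of Theorem~\ref{thm:Darboux k-simp}, the coefficients to depend only on the coordinates $\{y^l,y^\alpha_l\}$, so that $\omega^\alpha$ descends to a closed two-form of rank $2r_\alpha$ on the $(n+r_\alpha)$-dimensional space with those coordinates, with $V_\alpha$ a Lagrangian foliation of it. Restricting further to a leaf $F$ of $V_\alpha\oplus\Upsilon_\alpha$ — of dimension $2r_\alpha$, with $\Upsilon_\alpha$ the rank-$r_\alpha$ complement inside $V^{\perp_g}$ on which $\omega^\alpha$ is nondegenerate, as in the linear lemma — the two-form $\omega^\alpha$ becomes symplectic with Lagrangian foliation $V_\alpha$.

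On each such leaf I would then run the Liouville--Mineur--Arnold argument verbatim as in the proof of Theorem~\ref{thm:Darboux k-simp}: choose vector fields $X^\alpha_{y^{\mu^\alpha_j}}$ taking values in $V_\alpha$ dual to the $y^{\mu^\alpha_j}$, pick vector fields $Y_j$ projecting to $\partial/\partial y^{\mu^\alpha_j}$, set $\Theta^\alpha_j=\iota_{Y_j}\omega^\alpha$, verify $\iota_{X^\alpha_{y^{\mu^\alpha_j}}}\Theta^\alpha_\ell=-\delta^j_\ell$ and that $\d\Theta^\alpha_\ell$ vanishes on $V_\alpha$ because $[X^\alpha_{y^{\mu^\alpha_i}},X^\alpha_{y^{\mu^\alpha_j}}]$ is the Hamiltonian field of $\{y^{\mu^\alpha_i},y^{\mu^\alpha_j}\}=0$, whence $\jmath_F^*\Theta^\alpha_j=\d y^\alpha_{\mu^\alpha_j}$ for suitable potentials. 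This yields $\omega^\alpha=\sum_{j=1}^{r_\alpha}\d y^{\mu^\alpha_j}\wedge\d y^\alpha_{\mu^\alpha_j}$ and $V_\alpha=\langle\partial/\partial y^\alpha_{\mu^\alpha_j}\rangle$. Together with $\bigcap_{\alpha=1}^k\ker\omega^\alpha=D=\langle\partial/\partial z^j\rangle$ and the base coordinates $y^i$, which complete the system since $\dim M=n+r+d$, one obtains the asserted chart.

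The main obstacle I expect is the bookkeeping of smoothness and functional independence: the potentials $y^\alpha_{\mu^\alpha_j}$ are produced leaf-by-leaf, for each $\alpha$ separately and against a complement $\Upsilon_\alpha$ which need not be integrable, so one must check that they assemble into smooth functions on a full neighbourhood of $x$ and that the whole collection $\{y^i,z^j,y^\alpha_{\mu^\alpha_j}\}$ is functionally independent. This is precisely what the hypotheses $\dim M=n+r+d$, $V=\bigoplus_{\alpha=1}^k V_\alpha\oplus D$, $D=\bigcap_{\alpha=1}^k\ker\omega^\alpha$, and $D+V_\beta=\bigcap_{\alpha\neq\beta}\ker\omega^\alpha\cap V$ are designed to guarantee, and the verification runs parallel to the transition from the $k$-presymplectic linear Darboux lemma to the present statement; the rest is a routine adaptation of the proof of Theorem~\ref{thm:Darboux k-simp}.
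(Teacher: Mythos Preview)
Your proposal is correct and follows essentially the same route as the paper: invoke the preceding lemma to obtain coordinates adapted to $V_\alpha$ and $D$, then adapt the proof of Theorem~\ref{thm:Darboux k-simp} by restricting each $\omega^\alpha$ to the integral submanifolds of $V_\alpha\oplus\Upsilon_\alpha$, where it becomes symplectic, and run the Liouville--Mineur--Arnold argument there. In fact the paper's own treatment is far terser than yours---it is a single sentence preceding the theorem statement---so your write-up already supplies more of the bookkeeping (dependence of coefficients, absence of $\d z^j$ terms, smooth assembly of the potentials) than the paper does.
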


\section{\texorpdfstring{$k$}--cosymplectic and \texorpdfstring{$k$}--precosymplectic manifolds}\label{sec4:k-cosymp}

Similarly to previous sections, let us study $k$-cosymplectic and $k$-precosymplectic manifolds. Our investigation will introduce  relevant technical issues to be addressed that were not present in previous sections. One of its main differences with respect to previous Darboux theorems relies on the fact that Reeb vector fields are not uniquely defined in the case of $k$-precosymplectic manifolds. This suggests that Darboux coordinates for $k$-precosymplectic manifolds should not assume a canonical form form Reeb vector fields. Moreover, additional conditions will be needed to assume so as to obtain canonical bases for the distributions after having the corresponding differential forms written in a canonical manner.

\begin{definition}\label{deest}
Let $M$ be an $(n(k+1)+k)$-dimensional manifold. A
{\it$k$-cosymplectic structure} in $M$ is a family
$(\eta^\alpha,\omega^\alpha,V)$, with  $1\leq \alpha\leq k$, where  $\eta^1,\ldots,\eta^k$ are closed
one-forms on $M$, while  $\omega^1,\ldots,\omega^k$ are closed two-forms in $M$,
and $V$ is an integrable  $nk$-dimensional integrable distribution in $M$ satisfying that 
\begin{enumerate}[(1)]
\item  
$\eta^1\wedge\dots\wedge \eta^k\neq 0\,$,\quad $\eta^\alpha\vert_V=0\,,\quad \omega^\alpha\vert_{V\times V}=0\,,$
\item  
$\displaystyle {\bigcap_{\alpha=1}^{k}}\left( \ker \eta^\alpha\cap  \ker \omega^\alpha\right)=\{0\}\,$, \quad $\displaystyle \rk \,{\bigcap_{\alpha=1}^{k}}\ker \omega^\alpha=k  \,.$
 \end{enumerate}
A manifold $M$ endowed with a $k$-cosymplectic structure is said to be a 
{\it $k$-cosymplectic manifold}. 
\end{definition}

Every $k$-cosymplectic structure  $(\eta^\alpha ,\omega^\alpha,V)$ in $M$ admits a unique family of vector fields
$R_1,\ldots,R_k$ on $M$, called {\it Reeb vector fields}, such that
\begin{equation}\label{reeb}
    \inn_{R_\alpha}\eta^\beta=\delta^\beta_\alpha\,,\qquad\inn_{R_\alpha}\omega^\beta = 0 \,,\qquad \alpha,\beta=1,\ldots,k\,.
\end{equation}
Note that the existence of Reeb vector fields is independent of the existence or not of the distribution $V$.

Given a one-cosymplectic manifold $(M,\eta,\omega,V)$,  the pair $(\eta,\omega)$ is a special type of cosymplectic structure in $M$ that additionally admits the distribution $V$. Not every cosymplectic structure admits such a $V$. In fact, consider $(M=\mathbb{R}\times \mathbb{S}^2,\eta,\omega)$, where $
\eta$ is the one-form on $M$ obtained by pulling-back the one form $\d t$ on $\mathbb{R}$, and $
\omega$ is the pull-back to $M$ of the standard symplectic form on $\mathbb{S}^2$. Then, $(M=\mathbb{R}\times \mathbb{S}^2,\eta,\omega)$  is not a one-cosymplectic manifold because the standard symplectic form on $\mathbb{S}^2$ does not admit a distribution as commented previously in this paper.

\begin{theorem}[$k$-cosymplectic Darboux theorem]
\label{Darboux k-cosymp}
Given   a $k$-cosymplectic manifold of the form $(M,\eta^1,\ldots,\eta^k,\omega^1,\ldots,\omega^k,V)$,
 every point $x\in M$ admits a neighbourhood with local coordinates 
$
\{x^\alpha,y^i,y^\alpha_i\}
$,
with
$1\leq\alpha\leq k$,
$1\leq i \leq n$,
such that
$$
\eta^\alpha=\d x^\alpha
\,,\qquad
\omega^\alpha=\sum_{i=1}^n\d y^i\wedge\d y^\alpha_i \,,\qquad \alpha=1,\ldots,k. 
$$
In these coordinates, $R_\alpha=\dparder{}{x^\alpha}$ for $\alpha=1,\ldots,k$. If $k\neq 1$, then  $V=\left\langle\dparder{}{y^\alpha_i}\right\rangle$ where $\alpha=1,\ldots,k$ and $i=1,\ldots,n$. If $k=1$ and $[\ker \omega,V]\subset \ker \omega \oplus V$, then  $V=\left\langle\dparder{}{y^\alpha_i}\right\rangle$.
\end{theorem}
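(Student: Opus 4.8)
# Proof Proposal for Theorem \ref{Darboux k-cosymp}

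The plan is to reduce the statement to the $k$-symplectic Darboux theorem (Theorem~\ref{thm:Darboux k-simp}) along the leaves of the foliation cut out by $\eta^1,\dots,\eta^k$, and then transport the resulting coordinates to a full neighbourhood of $x$ in $M$ by means of the Reeb flows. First I would record two elementary facts. Using Cartan's formula together with $\d\omega^\gamma=0$ and $\d\eta^\gamma=0$, one gets $\inn_{[R_\alpha,R_\beta]}\omega^\gamma=0$ and $\inn_{[R_\alpha,R_\beta]}\eta^\gamma=0$ for all indices, whence $[R_\alpha,R_\beta]=0$ because the $R_\gamma$ span $\bigcap_\gamma\ker\omega^\gamma$ and are detected by the $\eta^\gamma$. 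The same computation gives $\Lie_{R_\gamma}\omega^\beta=\d\inn_{R_\gamma}\omega^\beta=0$ and $\Lie_{R_\gamma}\eta^\beta=\d\inn_{R_\gamma}\eta^\beta=0$. Since the $\eta^\alpha$ are closed and $\eta^1\wedge\dots\wedge\eta^k\neq0$, there are functions $x^1,\dots,x^k$ near $x$ with $\eta^\alpha=\d x^\alpha$; note $R_\alpha x^\beta=\inn_{R_\alpha}\eta^\beta=\delta^\beta_\alpha$, so the commuting vector fields $R_1,\dots,R_k$ are transverse to the common level sets of $(x^1,\dots,x^k)$.

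Next I would fix the leaf $N$ of $(x^1,\dots,x^k)$ through $x$, with inclusion $j_N\colon N\hookrightarrow M$; it has dimension $n(k+1)$, and since $V\subset\bigcap_\alpha\ker\eta^\alpha=TN$ we get a splitting $TM=TN\oplus\langle R_1,\dots,R_k\rangle$. One checks that $(N,j_N^*\omega^1,\dots,j_N^*\omega^k,V)$ is a $k$-symplectic manifold: the conditions $\restr{j_N^*\omega^\alpha}{V\times V}=0$ are inherited, and, because $\inn_{R_\gamma}\omega^\alpha=0$, a vector $v\in TN$ with $\inn_v(j_N^*\omega^\alpha)=0$ actually satisfies $\inn_v\omega^\alpha=0$ on all of $TM$; hence $\bigcap_\alpha\ker(j_N^*\omega^\alpha)=\big(\bigcap_\alpha\ker\omega^\alpha\big)\cap TN=\langle R_1,\dots,R_k\rangle\cap TN=\{0\}$ by transversality. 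Applying Lemma~\ref{lem:first-integrals} and the proof of Theorem~\ref{thm:Darboux k-simp} on $N$ yields local coordinates $\{y^i,p^\alpha_i\}$ on $N$ with $j_N^*\omega^\alpha=\sum_i\d y^i\wedge\d p^\alpha_i$ and, when $k\neq1$, with $V^N_\beta:=V\cap\bigcap_{\alpha\neq\beta}\ker(j_N^*\omega^\alpha)=\langle\partial/\partial p^\beta_i\rangle$ and $V=\bigoplus_\beta V^N_\beta$. I would then extend $y^i,p^\alpha_i$ to a neighbourhood of $x$ in $M$ by requiring them to be constant along the flows of $R_1,\dots,R_k$; together with $x^1,\dots,x^k$ these give a chart in which $R_\alpha=\partial/\partial x^\alpha$ and $\eta^\alpha=\d x^\alpha$. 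Because $\inn_{R_\gamma}\omega^\beta=0$ and $\Lie_{R_\gamma}\omega^\beta=0$, each $\omega^\beta$ is the pullback along the Reeb projection of $j_N^*\omega^\beta$, so $\omega^\beta=\sum_i\d y^i\wedge\d p^\beta_i$ on the whole neighbourhood.

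It remains to identify $V$. For $k\neq1$ I would show that $V_\beta:=V\cap\bigcap_{\alpha\neq\beta}\ker\omega^\alpha$ coincides with $\bigcap_\alpha\ker\eta^\alpha\cap\bigcap_{\alpha\neq\beta}\ker\omega^\alpha$: the inclusion $\subseteq$ is clear, and both sides have rank $n$, the second because its restriction to the leaf equals $\bigcap_{\alpha\neq\beta}\ker(j_N^*\omega^\alpha)$, whose rank is $n$ by the $k$-symplectic linear Darboux theorem (Theorem~\ref{thm:k-symp-puntual}). From this intrinsic description, exactly as for the Reeb fields one obtains $\inn_{[R_\gamma,X]}\omega^\alpha=0$ ($\alpha\neq\beta$) and $\inn_{[R_\gamma,X]}\eta^\alpha=0$ for every $X\in\Gamma(V_\beta)$, so $[R_\gamma,X]\in V_\beta$; thus each $V_\beta$, and hence $V=\bigoplus_\beta V_\beta$, is invariant under the Reeb flows, and therefore the Reeb-invariant coordinates transported from $N$ do describe $V$ on the whole neighbourhood, giving $V=\langle\partial/\partial p^\beta_i\rangle_{i,\beta}$; relabelling $p^\alpha_i$ as $y^\alpha_i$ yields the statement. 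For $k=1$, $V$ is not determined by $(\eta,\omega)$ and the Reeb flow need not preserve it; here the hypothesis $[\ker\omega,V]\subset\ker\omega\oplus V$ makes $\ker\omega\oplus V$ integrable and $\Lie_R$-invariant, so on the local quotient $\pi\colon U\to\bar U=U/\ker\omega$ — a symplectic manifold with $\pi^*\bar\omega=\omega$ — the distribution $V$ descends to an integrable Lagrangian distribution $\bar V$. The polarised (Liouville--Mineur--Arnold) symplectic Darboux theorem applied to $(\bar U,\bar\omega,\bar V)$, together with pulling back the coordinates and adjoining $x^1$ with $\d x^1=\eta$, finishes the case $k=1$.

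The main obstacle is precisely the behaviour of $V$ under the Reeb flows: for $k\geq2$ one must notice the intrinsic characterisation $V_\beta=\bigcap_\alpha\ker\eta^\alpha\cap\bigcap_{\alpha\neq\beta}\ker\omega^\alpha$, which forces Reeb-invariance for free, whereas for $k=1$ no such description exists and the extra bracket hypothesis is genuinely necessary — this is exactly what produces the asymmetry in the statement. A secondary, routine point is that the first integrals produced on $N$ can be chosen jointly functionally independent together with $x^1,\dots,x^k$, which is handled as in Lemma~\ref{lem:first-integrals}.
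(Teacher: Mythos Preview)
Your proposal is correct and follows essentially the same route as the paper: restrict to a leaf of $\bigcap_\alpha\ker\eta^\alpha$, apply the $k$-symplectic Darboux theorem there, and extend via the commuting Reeb flows, using for $k\geq 2$ the intrinsic description $V_\beta=\bigcap_\alpha\ker\eta^\alpha\cap\bigcap_{\alpha\neq\beta}\ker\omega^\alpha$ to get Reeb-invariance of $V$ automatically. The only minor variation is your treatment of the case $k=1$, where instead of using $[R,V]\subset V$ directly to transport a canonical basis of $V$ along the Reeb flow (as the paper does), you pass to the symplectic quotient $U/\ker\omega$ and descend $V$ to a Lagrangian polarisation; this is an equivalent and equally clean way to handle that case.
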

\begin{proof} Since $\eta^1,\ldots,\eta^k$ are closed and $\eta^1\wedge\ldots\wedge \eta^k$ does not vanish at any point of $M$, one has that $H=\bigcap_{\alpha=1}^k\ker \eta^\alpha$ is an integrable distribution of corank $k$. Moreover, $V$ is contained in $H$ by the definition of $k$-cosymplectic manifolds. Consider  one of the integral leaves, $\mathcal{S}$, of $H$, and the natural local immersion $\jmath_\mathcal{S}:\mathcal{S}\hookrightarrow M$. The $\jmath_\mathcal{S}^*\omega^\alpha$ along with the restriction of $V$ to $\mathcal{S}$ give rise to a $k$-symplectic manifold since a vector field taking values in $H$ that is orthogonal to $H$ relative to $\omega^1,\ldots,\omega^k$ belongs to $\bigcap_{\alpha=1}^k(\ker\eta^\alpha\cap \ker \omega^\alpha)=0$. Hence, $\jmath_\mathcal{S}^*\omega^1,\ldots, \jmath_\mathcal{S}^*\omega^k$ admit $k$-symplectic Darboux coordinates. Doing the same along different leaves of $H$ and gluing the results, we obtain that $\omega^1,
\ldots,\omega^k,\eta^1,\ldots,\eta^k$ have their canonical form.  Let us explain this in detail. The differential forms $\omega^1,\ldots,\omega^k,\eta^1,\ldots,\eta^k$ are invariant relative to the Reeb vector fields of the $k$-cosymplectic manifold and their value in $M$ can be understood as the extension to $M$ obtained from their value on $\mathcal{S}$ by the extension by one-parametric groups of diffeomorphisms of the vector fields $R_1,\ldots,R_k$.  Consider coordinates $x^1,\ldots,x^k$ rectifying simultaneously the vector fields $R_1,\ldots,R_k$.  If one consider the coordinate system in $M$ given by the coordinates $x^\alpha, y^i,y_i^\alpha$ on $M$, where $y^i,y^\alpha_i$ are invariant under the flows of $R_1,\ldots,R_k$ and match the $k$-symplectic Darboux coordinates on $\mathcal{S}$, one gets that the $x^\alpha$ are functionally independent of the $y^i,y^\alpha_i$. Moreover, since $R_1,\ldots,R_k$ are in the kernels of $\omega^1,\ldots,\omega^k$ and they are invariant relative to $R_1,\ldots,R_k$, it follows that their form on $M$ is the same as in $\mathcal{S}$. Meanwhile, $\eta^\alpha=dx^\alpha$ for $\alpha=1,\ldots,k$. 
 and the forms $\omega^1,\ldots,\omega^k,\eta^1,\ldots,\eta^k$ on $M$ take a canonical form.

To obtain a canonical basis of the distribution $V$, additional conditions must be added for $k=1$. On the other hand, if $k>1$, then  each distribution $V_\alpha$ is the intersection of the kernels of $\omega^\beta$ for $\beta\neq \alpha$ along with the intersection with $\bigcap_{\beta=1}^k \ker \eta^\beta$. They are therefore invariant relative to the Reeb vector fields. So, they can be put in canonical form on $\mathcal{S}$ and extended as previously from $\mathcal{S}$ to vector fields on $M$ with a canonical form. On the other hand, if $k=1$, one has that $V$ may not be the kernel of a closed form invariant relative to the associated Reeb vector field and the previous method fails. To ensure this, one has to assume $[\ker \omega,V]\subset \ker\omega \oplus V$. Equivalently, $[R,V]\subset V$ for the unique Reeb vector field of the one-cosymplectic manifold.  
\end{proof}

The conditions given in \cite[Lemma 5.1.1]{Mer_97} and \cite{LMORS_98} for the Darboux theorem for $k$-cosymplectic manifolds may be a little bit misleading since  a necessary condition in the case $k=1$, namely $V$ must be invariant relative to the action of the Reeb vector field, is not given in \cite[Theorem II.4]{LMORS_98} and \cite[Lemma 5.1.1]{Mer_97}, but in \cite[Remark II.5]{LMORS_98} or \cite[Note 5.2.1]{Mer_97}, respectively,  after them.  Moreover, the above-mentioned condition in \cite[Lemma 5.1.1]{Mer_97,LMORS_98} can be rewritten in a new way, namely  $[R_\alpha,V]\subset V$, with $\alpha=1,\ldots,k$, can be rewritten by saying that the distributions $\ker \omega$ and $V$ are integrable and their direct sum is integrable. This is also commented in \cite{GM_23}.

As shown in the previous theorem, the condition $[\ker \omega, V]\subset V\oplus \ker\omega$ is necessary in order to ensure a canonical form for the elements of a basis of $V$. Notwithstanding, if one is mainly concerned with the canonical form of the $\eta^1,\ldots,\eta^k,\omega^1,\ldots,\omega^k$, this condition can be avoided. This is the reason why we skipped $[\ker \omega, V]\subset V\oplus \ker\omega$ in our definition of $k$-cosymplectic manifolds.

\begin{example}
Let $\{x^1,\ldots,x^k\}$ be a linear coordinate system on $\mathbb{R}^k$. Given the canonical projections $\bar{\pi}_1\colon\R^k\times (\T^1_k)^{*}Q\to\R^k$,
$\bar{\pi}_2\colon\R^k\times (\T^1_k)^{*}Q\to (\T^1_k)^{*}Q$,
$\bar{\pi}_0\colon\R^k\times (\T^1_k)^{*}Q\to \R^k\times Q$.
The canonical model for $k$-cosymplectic structures is 
$$(\R^k\times (\T^1_k)^{*}Q,(\bar{\pi}_1)^*\d x^\alpha,(\bar{\pi}_2)^*\omega^\alpha,V=\ker(\bar\pi_0)_*)\,,
$$
where $\omega^1,\ldots,\omega^k$ are the two-forms of the canonical $k$-symplectic structure on $(\T^1_k)^*Q$.
\end{example}

More generally, one has the following construction.

\begin{example}Let $(N,\varpi^\alpha,\mathcal{V})$ be an arbitrary $k$-symplectic manifold. Given the canonical projections
    \begin{equation*}
        \pi_{\R^k}\colon \R^k\times N\longrightarrow\R^k\,,\qquad \pi_N\colon\R^k\times N\longrightarrow N
    \end{equation*}
define the differential forms
\begin{equation*}
        \eta^\alpha = \pi_{\R^k}^\ast(\d x^\alpha)\,,\quad \omega^\alpha = \pi_N^\ast\varpi^\alpha\,,\qquad \alpha=1,\ldots,k\,.
    \end{equation*}
    The distribution $\mathcal{V}$ in $N$ defines a distribution $V$ in $M=\R^k\times N$ by considering the vector fields on $N$ as vector fields in $M$ in the natural way via the isomorphism $\T M=\T\mathbb{R}^k\oplus \T N$. All conditions given in Definition \ref{deest} are verified, and hence $(M=\R^k\times N,\eta^\alpha,\omega^\alpha,V)$ is a $k$-cosymplectic manifold.
\end{example}

As in the case of $k$-presymplectic manifolds, there are many ways of defining a $k$-precosymplectic structure. Note that in the $k$-precosymplectic case, one cannot, in general, extend the notion of Reeb vector fields to give an object that is uniquely defined. Hence, one may wonder about the necessity of putting them into a canonical form in Darboux coordinates, since they are not unique. Taking this into account, let us give one of the possible definitions for $k$-precosymplectic manifolds. No condition for the determination of the canonical form of the Reeb vector fields will be assumed.

\begin{definition}\label{predeest}
Let $M$ be a manifold of dimension $n(k+1)+k-m$, with $0\leq m\leq nk$. A {\it  $k$-precosymplectic structure} in $M$ is a family $(\eta^\alpha,\omega^\alpha,V)$, with $1\leq \alpha\leq k$, where $\eta^\alpha$ are closed
one-forms in $M$, while $\omega^\alpha$ are closed two-forms in $M$ such that $\rk\omega^\alpha=2r_\alpha$, with $1\leq r_\alpha\leq n$, and $V$ is an integrable distribution in $M$ of corank $n+k$ satisfying that
\begin{enumerate}
\item  
$\eta^1\wedge\dots\wedge \eta^k\neq 0\,,\quad \eta^\alpha\vert_V=0\,,\quad \omega^\alpha\vert_{V\times V}=0\,,\qquad \alpha=1,\ldots,k,$
\item 
$\displaystyle \rk \bigcap_{\alpha=1}^{k}\ker \omega^\alpha= k+d\,,$
\item $\displaystyle \rk\bigcap_{\alpha=1}^{k}\big(\ker \omega^\alpha\cap\ker\eta^\alpha\big) = d\,,$
\item one has that $V$ is an integrable distribution admitting a decomposition into integrable distributions $V=\bigoplus_{\alpha=1}^kV_\alpha\oplus D$ such that $ D+V_\beta=\left(\bigcap_{\beta\neq \alpha}\ker \omega^\alpha
\right)\cap V$ for $ \beta=1,\ldots,k$ and $k\neq 1$ for $\dim V_\beta=r_\beta$.
\end{enumerate}
A manifold $M$ endowed with a $k$-precosymplectic structure is called a 
{\it$k$-precosymplectic manifold}. We here after define $r=\sum_{\alpha=1}^kr_\alpha$.

\end{definition}

\begin{example}
Consider a $k$-presymplectic manifold $(P,\varpi^\alpha,V)$. Let us construct a $k$-precosymplectic structure on $\R^k\times P$. First, consider the canonical projections
$$
\R^k\times P\overset{\pi}{\longrightarrow}P\,,\qquad \R^k\times P\overset{\tau}{\longrightarrow}\R^k\,.
$$ 
Then, define $\eta^\alpha = \tau^\ast\d x^\alpha$, where $x^1,\ldots,x^k$ are linear coordinates in $\R^k$, and  $\omega^\alpha = \pi^\ast\varpi^\alpha$ for $\alpha=1,\ldots,k$. Then, $(\R^k\times P,\eta^\alpha,\omega^\alpha)$ is a $k$-precosymplectic manifold.
\end{example}

Let us prove a technical result that is necessary to asses the role played by the distribution $\bigcap_{\alpha=1}^k\ker \eta^\alpha$ in $k$-precosymplectic manifolds. 

\begin{lemma}
Given a $k$-precosymplectic manifold $(M,\eta^1,\ldots,\eta^k,\omega^1,\ldots,\omega^k, V)$,  every $x\in M$ admits a coordinated neighbourhood with coordinates 
$$\{x^1,\ldots,x^k, y^1,\ldots,y^n,z^1,\ldots,z^d,y^\alpha_{1},\dotsc,y_{r_\alpha}^\alpha\}\,,\qquad \alpha=1,\ldots,k\,,$$
on a neighbourhood of $x$ so that
$$V_\alpha = 
\left\langle \frac{\partial}{\partial y^\alpha_1},\ldots,\frac{\partial}{\partial y^\alpha_{r_\alpha}} \right\rangle \,,\qquad \alpha=1,\ldots,k\,,\qquad D=\left\langle \frac{\partial}{\partial z^1},\ldots,\frac{\partial}{\partial z^d}\right\rangle\,.
$$
\end{lemma}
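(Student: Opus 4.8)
The plan is to follow the proof of the corresponding Lemma for $k$-presymplectic manifolds, the only genuinely new ingredient being that the closed one-forms $\eta^1,\dots,\eta^k$ supply $k$ extra coordinates via the Poincaré lemma. Since each $\eta^\alpha$ is closed, after shrinking the neighbourhood one has $\eta^\alpha=\d x^\alpha$ for functions $x^1,\dots,x^k$, and $\eta^1\wedge\dots\wedge\eta^k\neq 0$ forces $x^1,\dots,x^k$ to be functionally independent. Because $\eta^\alpha\vert_V=0$, we get $V\subset H:=\bigcap_{\alpha=1}^k\ker\eta^\alpha$, so the $x^\alpha$ are among the first integrals of $V$.

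The structural fact I would establish next is that, for each $\beta$, the distribution $V\cap\ker\omega^\beta$ is integrable and coincides with $D\oplus\bigoplus_{\alpha\neq\beta}V_\alpha$. Integrability follows from a Cartan-calculus computation: if $X,Y$ take values in $V\cap\ker\omega^\beta$, then $[X,Y]$ takes values in $V$ (as $V$ is integrable), and since $\inn_Y\omega^\beta=0$ and $\Lie_X\omega^\beta=\d\inn_X\omega^\beta+\inn_X\d\omega^\beta=0$ (here $\omega^\beta$ is closed and $\inn_X\omega^\beta=0$), one gets $\inn_{[X,Y]}\omega^\beta=\Lie_X\inn_Y\omega^\beta-\inn_Y\Lie_X\omega^\beta=0$, i.e.\ $[X,Y]$ takes values in $\ker\omega^\beta$ as well. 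The inclusion $D\oplus\bigoplus_{\alpha\neq\beta}V_\alpha\subset V\cap\ker\omega^\beta$ is immediate from condition (4) in Definition~\ref{predeest}, since for $\alpha\neq\beta$ one has $D+V_\alpha=\big(\bigcap_{\gamma\neq\alpha}\ker\omega^\gamma\big)\cap V\subset\ker\omega^\beta$; the reverse inclusion follows by a dimension count, using the $k$-presymplectic linear Darboux theorem to conclude $\rk(\ker\omega^\beta\cap V)=\rk V-r_\beta=(r+d)-r_\beta$. In the same way, from condition (4), $\bigcap_\beta(D\oplus V_\beta)=D=\big(\bigcap_\alpha\ker\omega^\alpha\big)\cap V$ has rank $d$.

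Now I would construct the coordinates through nested families of first integrals. The distribution $V$ has corank $n+k$; complete $\{x^1,\dots,x^k\}$ to a full set of first integrals $\{x^\alpha,y^1,\dots,y^n\}$. The distribution $\bigoplus_{\alpha=1}^kV_\alpha$ is integrable (Definition~\ref{predeest}(4)) of corank $n+k+d$ and is contained in $V$, so the $x^\alpha,y^i$ are among its first integrals; add $d$ further first integrals $z^1,\dots,z^d$, chosen so that $\d x^\alpha,\d y^i,\d z^j$ span the annihilator of $\bigoplus_\alpha V_\alpha$. Finally, for each $\beta$ the integrable distribution $V\cap\ker\omega^\beta$ has corank $n+k+r_\beta$ and again contains no $x^\alpha,y^i$ in its complement, so after adjoining $r_\beta$ first integrals $y^\beta_1,\dots,y^\beta_{r_\beta}$ the forms $\d x^\alpha,\d y^i,\d y^\beta_l$ span its annihilator. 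One then checks that the $n+k+d+r=\dim M$ functions $\{x^\alpha,y^i,z^j,y^\beta_l\}$ have pointwise independent differentials: evaluating a hypothetical dependence on each $V_\gamma$ kills the $\d y^\gamma_l$ (since $\{\d y^\gamma_l\vert_{V_\gamma}\}_l$ is a basis of $V_\gamma^\ast$, as $V_\gamma$ is a supplement of $V\cap\ker\omega^\gamma$ in $V$) while $\d x^\alpha,\d y^i,\d z^j,\d y^\beta_l$ with $\beta\neq\gamma$ vanish on $V_\gamma$; evaluating the residual dependence on $D$ then kills the $\d z^j$ (as $\{\d z^j\vert_D\}_j$ is a basis of $D^\ast$), and $\d x^\alpha,\d y^i$ are independent. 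Hence $\{x^\alpha,y^i,z^j,y^\beta_l\}$ is a coordinate system.

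It remains to identify the distributions in these coordinates. Since $V_\alpha\subset\bigoplus_\gamma V_\gamma\subset V$, the covectors $\d x^\gamma,\d y^i,\d z^j$ annihilate $V_\alpha$, and since $V_\alpha\subset D\oplus\bigoplus_{\gamma\neq\beta}V_\gamma=V\cap\ker\omega^\beta$ for every $\beta\neq\alpha$, the covectors $\d y^\beta_l$ with $\beta\neq\alpha$ do too; these are $\dim M-r_\alpha$ independent covectors, so $V_\alpha$ is their common kernel, namely $\langle\partial/\partial y^\alpha_1,\dots,\partial/\partial y^\alpha_{r_\alpha}\rangle$. Likewise $D\subset V\cap\ker\omega^\beta$ for every $\beta$, hence $D$ is annihilated by all of $\d x^\gamma,\d y^i,\d y^\beta_l$, which are $\dim M-d$ independent covectors, giving $D=\langle\partial/\partial z^1,\dots,\partial/\partial z^d\rangle$. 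I expect the main obstacle to be the bookkeeping in the third paragraph — arranging the three successive families of first integrals so that all their differentials stay independent — together with pinning down the rank and integrability of $V\cap\ker\omega^\beta$; both are of the same nature as in the $k$-presymplectic case and introduce no new difficulty.
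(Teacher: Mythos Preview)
Your proposal is correct and follows essentially the same route as the paper: construct $x^\alpha$ as potentials of $\eta^\alpha$, then successively choose first integrals $y^i$ of $V$, $z^j$ of $\bigoplus_\alpha V_\alpha$, and $y^\beta_l$ of $V\cap\ker\omega^\beta=D\oplus\bigoplus_{\alpha\neq\beta}V_\alpha$, finally identifying $V_\alpha$ and $D$ by annihilator counting. Your Cartan-calculus argument for the integrability of $V\cap\ker\omega^\beta$ and your explicit independence check are in fact more detailed than the paper, which simply asserts both; invoking the $k$-presymplectic linear Darboux theorem for the rank of $V\cap\ker\omega^\beta$ is slight overkill (a direct algebraic argument using $V_\beta\cap\ker\omega^\beta\subset V_\beta\cap D=0$ suffices), but it is harmless.
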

\begin{proof} 
Since $\eta^1,\ldots,\eta^k$ are closed, they admit potentials  $x^1,\ldots,x^k$, respectively. Let $y^1,\ldots,y^n$ be common functionally independent first integrals for 
 all vector fields taking values in the integrable distribution $V$ such that
 $$
 \d x^1\wedge\ldots\wedge \d x^k\wedge \d y^1\wedge\ldots\wedge \d y^n\neq 0.
 $$ It is assumed that $\bigoplus_{\alpha=1}^kV_\alpha$ is integrable. Hence, $V_1\oplus \ldots \oplus V_k$ has common first integrals $z^1,\ldots,z^d$ such that
 $$\mu=\d x^1\wedge \ldots \wedge \d x^k\wedge \d z^1\wedge \ldots \wedge \d z^d\wedge \d y^1\wedge \ldots\wedge \d y^n\neq 0.
 $$
 Given different integers $\alpha_1,\ldots,\alpha_{k-1}\in\{1,\ldots,k\}$, and $k>1$, one has that, 
$$
V_{\alpha_1}\oplus\dotsb\oplus V_{\alpha_{k-1}}\oplus D=\ker \omega^\beta\cap V\,,
$$
where $\beta$ is the only number in $\{1,\ldots,k\}$ not included in $\{\alpha_1,\ldots,\alpha_{k-1}\}$. 
Hence, the distribution $V_{\alpha_1}\oplus\ldots\oplus V_{\alpha_{k-1}}\oplus D$ has corank $r_\beta$ in $V$, it is integrable, and the vector fields taking values in it have $r_\beta$ common local first-integrals $y^\beta_1,\ldots,y^\beta_{r_\beta}$ such that $\d y^\beta_1\wedge \ldots \wedge \d y^\beta_{r_\beta}\wedge \mu\neq 0$. Note that, if $k=1$, a similar result can be obtained by considering $V=V_1\oplus D$ and some $r_1$ functionally independent integrals of $D$. 
By construction, 
$\{x^1,\ldots, x^k,y^1_1,\ldots,y^1_{r_1},\ldots,y^k_1,\ldots,y^k_{r_k},z^1,\ldots,z^d,y^1,\ldots,y^n\}$ becomes a local coordinate system on $M$ and 
$$
V_\alpha=\left(\bigcap_{i=1}^d\ker\d z^i\right)\cap\left(\bigcap_{i=1}^n\ker\d y^i\right)\cap\Bigg(\bigcap_{\substack{\beta\neq \alpha\\i=1,\ldots,r_\beta}}\ker \d y^\beta_i\,\Bigg)\cap \Bigg(\bigcap_{\beta=1}^k\ker \d x^\beta \Bigg).
$$
for $k>1$. For $k=1$, a similar expression is obtained by skipping the kernels of the $\d y^1_i$. Moreover, $\dfrac{\partial}{\partial y^\beta_i}$ with $i=1,
\ldots,r_\beta$ vanish on all coordinates $y^{\alpha}_j,y^i$, with $\alpha\neq \beta$ and $j=1,\ldots,r_\alpha$, and the $z^1,\ldots,z^d$.  
Hence, 
$$
\left\langle\frac{\partial}{\partial y^\beta_1},\ldots,\frac{\partial}{\partial y^\beta_{r_\beta}}\right\rangle=V_\beta\,,\qquad \beta=1,\ldots,k\,,
$$
and
$$
\left\langle \frac{
\partial}{\partial z^1},\ldots,\frac{\partial}{\partial z^d}\right\rangle=D\,.
$$
\end{proof}
The corresponding Darboux theorem for $k$-precosymplectic manifolds reads as follows.

\begin{theorem}[$k$-precosymplectic Darboux Theorem]\label{Darboux k-precosymp}
Let  $M$ be a $k$-precosymplectic manifold such that $\dim M=n+d+r+k$, while
$\rk\omega^\alpha=2r_\alpha$, with $1\leq r_\alpha\leq n$. 
 Let us assume the existence of $k$ Reeb vector fields $R_1,\ldots,R_k$ spanning an integrable $k$-dimensional distribution and such that they commute among themselves. For every  $x\in M$, there exists a local chart of coordinates 
$$
\{x^\alpha,y^i,y^\alpha_{\mu_\alpha},z^j\}\,,\quad
1\leq\alpha\leq k  \,,\quad  1\leq i\leq n\,,\quad \mu_\alpha\in I_\alpha\subseteq\{1,\dots,n\}\,,\quad \vert I_\alpha\vert = r_\alpha\,,\quad 1\leq j\leq d\,,
$$
 such that
$$
    \eta^\alpha=\d x^\alpha\,,\quad \omega^\alpha=\sum_{\mu_\alpha\in I_\alpha}\d y^{\mu_\alpha}\wedge\d y^\alpha_{\mu_\alpha}\,\qquad \alpha=1,\ldots,k\,.
$$
If additionally $[R_i,V]\subset V$, then
$$
    V=\left\langle\displaystyle\frac{\partial}{\partial y^\alpha_{\mu_\alpha}},    \frac{\partial}{\partial z^j}\right\rangle\,,\qquad  \bigcap_{\alpha=1}^{k}(\ker \eta^\alpha\cap\ker\omega^\alpha)=\left\langle\displaystyle\frac{\partial}{\partial z^j}\right\rangle \,.
$$
\end{theorem}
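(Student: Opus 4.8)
The plan is to follow closely the proof of Theorem~\ref{Darboux k-cosymp}, the only change being that the leaf on which the problem is reduced now carries a $k$-\emph{pre}symplectic structure, so that Theorem~\ref{Darboux k-presimp} is invoked where the $k$-symplectic Darboux theorem was used; the extra hypothesis that $R_1,\dots,R_k$ span an integrable $k$-dimensional distribution and pairwise commute is exactly what compensates for the non-uniqueness of Reeb vector fields when the coordinates are propagated off the leaf. First I would set $H=\bigcap_{\alpha=1}^{k}\ker\eta^\alpha$; since the $\eta^\alpha$ are closed with $\eta^1\wedge\dots\wedge\eta^k\neq0$, this is an integrable distribution of corank $k$, and $\eta^\alpha|_V=0$ forces $V\subset H$. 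From $\inn_{R_\beta}\eta^\alpha=\delta^\alpha_\beta$ one gets that $R_1,\dots,R_k$ are pointwise independent with $\langle R_1,\dots,R_k\rangle\cap H=\{0\}$, so $\T M=\langle R_1,\dots,R_k\rangle\oplus H$. Since the $R_\alpha$ commute they can be simultaneously rectified; and as each closed $\eta^\alpha$ admits a potential $f^\alpha$ with $R_\beta(f^\alpha)=\inn_{R_\beta}\eta^\alpha=\delta^\alpha_\beta$, I would take $x^\alpha:=f^\alpha$, so that $\eta^\alpha=\d x^\alpha$ and $R_\alpha=\partial/\partial x^\alpha$ in the chart to be built.

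Fix an integral leaf $\mathcal{S}$ of $H$ through $x$, with inclusion $\jmath_{\mathcal{S}}\colon\mathcal{S}\hookrightarrow M$. The heart of the argument is to check that $(\mathcal{S},\jmath_{\mathcal{S}}^\ast\omega^1,\dots,\jmath_{\mathcal{S}}^\ast\omega^k,V|_{\mathcal{S}})$ is a $k$-presymplectic manifold in the refined sense required by Theorem~\ref{Darboux k-presimp}. This is a rank computation: every $R_\alpha$ lies in all the $\ker\omega^\beta$ and is transverse to $H$, so $\ker(\jmath_{\mathcal{S}}^\ast\omega^\alpha)=\ker\omega^\alpha\cap H$ has dimension $\rk\ker\omega^\alpha-k$, whence $\rk\jmath_{\mathcal{S}}^\ast\omega^\alpha=2r_\alpha$; similarly $\bigcap_\alpha\ker(\jmath_{\mathcal{S}}^\ast\omega^\alpha)=\big(\bigcap_\alpha\ker\omega^\alpha\big)\cap H=D$ has rank $d$, using condition~3 of Definition~\ref{predeest} and $D\subset V\subset H$; and condition~4 together with $V\subset H$ gives $V|_{\mathcal{S}}=\bigoplus_\alpha V_\alpha\oplus D$ with $\dim V_\alpha=r_\alpha$ and $D+V_\beta=\big(\bigcap_{\alpha\neq\beta}\ker\jmath_{\mathcal{S}}^\ast\omega^\alpha\big)\cap V$.

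Applying Theorem~\ref{Darboux k-presimp} on $\mathcal{S}$ — equivalently, invoking the preceding lemma to rectify $V_\alpha$ and $D$ and then restricting each $\omega^\alpha$ to the integral submanifolds of $V_\alpha\oplus\Upsilon_\alpha$, on which it is symplectic — produces functions $\{y^i,y^\alpha_{\mu_\alpha},z^j\}$ on $\mathcal{S}$ with $\jmath_{\mathcal{S}}^\ast\omega^\alpha=\sum_{\mu_\alpha\in I_\alpha}\d y^{\mu_\alpha}\wedge\d y^\alpha_{\mu_\alpha}$, $V_\alpha=\langle\partial/\partial y^\alpha_{\mu_\alpha}\rangle$, $D=\langle\partial/\partial z^j\rangle$ and $\bigcap_\alpha\ker\jmath_{\mathcal{S}}^\ast\omega^\alpha=\langle\partial/\partial z^j\rangle$. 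I would then extend $y^i,y^\alpha_{\mu_\alpha},z^j$ to a neighbourhood of $x$ by declaring them invariant under the commuting flows of $R_1,\dots,R_k$; together with $x^1,\dots,x^k$ these form a chart, since $\{\d x^\alpha=\eta^\alpha,\d y^i,\d y^\alpha_{\mu_\alpha},\d z^j\}$ are pointwise independent and $k+n+r+d=\dim M$. Because $\inn_{R_\beta}\omega^\alpha=0$ and $\Lie_{R_\beta}\omega^\alpha=0$, each $\omega^\alpha$ is the pull-back of $\jmath_{\mathcal{S}}^\ast\omega^\alpha$ along the projection onto $\mathcal{S}$ defined by the Reeb flows; as the $y^i,y^\alpha_{\mu_\alpha}$ are also such pull-backs, this gives $\omega^\alpha=\sum_{\mu_\alpha\in I_\alpha}\d y^{\mu_\alpha}\wedge\d y^\alpha_{\mu_\alpha}$ on $M$, while $\eta^\alpha=\d x^\alpha$ by construction. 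If moreover $[R_i,V]\subset V$ then $V$ is invariant under the Reeb flows, hence equal to the Reeb-flow transport of $V|_{\mathcal{S}}$, i.e. $V=\langle\partial/\partial y^\alpha_{\mu_\alpha},\partial/\partial z^j\rangle$; and reading off the two coordinate expressions, $\bigcap_\alpha\ker\omega^\alpha=\langle\partial/\partial x^\alpha,\partial/\partial z^j\rangle$ and $\bigcap_\alpha\ker\eta^\alpha=\langle\partial/\partial y^i,\partial/\partial y^\alpha_{\mu_\alpha},\partial/\partial z^j\rangle$, whose intersection is $\langle\partial/\partial z^j\rangle$, as claimed.

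I expect the rank bookkeeping of the second step to be the only real obstacle: once it is confirmed that $(\mathcal{S},\jmath_{\mathcal{S}}^\ast\omega^\alpha,V|_{\mathcal{S}})$ satisfies every clause of the $k$-presymplectic definition — in particular that $D=\bigcap_\alpha(\ker\omega^\alpha\cap\ker\eta^\alpha)$ restricts to the common kernel of the leaf forms and that the $V_\alpha$-decomposition is inherited — the remaining steps (rectifying commuting Reeb fields, extending Reeb-invariant functions, recognising the $\omega^\alpha$ as pull-backs) are routine. Note that, unlike in Theorem~\ref{Darboux k-cosymp}, the case $k=1$ needs no separate treatment here, since $V$-invariance is supplied directly as the hypothesis $[R_i,V]\subset V$ rather than derived from the other data.
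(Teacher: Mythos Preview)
Your proposal is correct and follows essentially the same route as the paper: restrict to a leaf of $H=\bigcap_\alpha\ker\eta^\alpha$, apply the $k$-presymplectic Darboux theorem (Theorem~\ref{Darboux k-presimp}) there, and propagate off the leaf using the commuting Reeb flows, invoking $[R_i,V]\subset V$ only for the canonical basis of~$V$. You are in fact more explicit than the paper about the rank bookkeeping on the leaf and about why the extended functions form a chart, which the paper leaves to the reader by referring back to Theorem~\ref{Darboux k-cosymp}.
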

\begin{proof} Consider the distribution $\Upsilon=\bigcap_{\alpha=1}^k\ker\eta^\alpha$, which is integrable of rank $n+d+r$. One can define a leaf  $S_\lambda$ of $\Upsilon$. Then, one has the immersion $\jmath_\lambda:S_\lambda\hookrightarrow M$. Since $V$ is included in $\Upsilon$, one has that  $\jmath^*_\lambda \omega^1,\ldots,\jmath^*_\lambda \omega^k$ 
 allow us to define a $k$-presymplectic manifold with Darboux coordinates for the $\eta^\alpha$ and the $\omega^\alpha$ which depend smoothly on $\lambda$. Note that the $\omega^\alpha,\eta^\alpha$ are invariant relative to some Reeb vector fields $R_1,\ldots,R_k$ spanning an involutive distribution and commuting among themselves. Using this fact and proceeding as in the Darboux $k$-cosymplectic manifold structure,  we obtain our Darboux coordinates for the $\eta^\alpha$ and the $\omega^\alpha$. Note that the same applies to the canonical basis for $\bigcap_{\alpha=1}^k(\ker \eta^\alpha\cap \ker\omega^\alpha)$ even for $k=1$. 
 
 Notwithstanding, the form of the basis for the distribution $V$ needs the additional condition about its invariance relative to $R_1,\ldots,R_k$. Then, gluing together as in Theorem \ref{Darboux k-cosymp}, the result follows. 
\end{proof}

\begin{remark}
Note that $k$-precosymplectic manifolds admit Reeb vector fields, but they are not uniquely defined by conditions \eqref{reeb}.
One must impose some additional condition on $M$
to determine them uniquely. 
For instance, let us restrict ourselves to a
$k$-precosymplectic structure on 
$\R^k\times M$, where $M$ is a $k$-presymplectic manifold.
Then, if we ask the Reeb vectors fields to be vertical with respect to the projection
$\R^k\times M\to\R^k$, the system of equations
\eqref{reeb} determines univocally Reeb vector fields.
An equivalent way of obtaining this same family 
is taking the vector fields $\displaystyle\left\{\parder{}{x^\alpha} \right\}$ on $\R^k$ and lifting them to $\R^k\times M$
with the trivial connection $\displaystyle\d x^\alpha\otimes\parder{}{x^\alpha}$.
As it is obvious, in Darboux coordinates we have that these vector fields are
$\displaystyle R_\alpha=\parder{}{x^\alpha}$. Note that every $k$-presymplectic structure in this case will also satisfy the conditions established in our Darboux theorem.
\end{remark}


\section{Multisymplectic and premultisymplectic structures}\label{sec5:multisymp}


Let us now comment on certain results on Darboux coordinates for multisymplectic forms \cite{CIL_99,FG_13,RW_19}. First, let us detail some results on (pre)multisymplectic geometry (see \cite{CIL_96,CIL_99,EIMR_12} for further references).

In the context of (pre)multisymplectic geometry, the standard kernel of a differential form is called the {\it one-kernel}.

\begin{definition}
Let $M$ be an $n$-dimensional differentiable manifold.
A closed form $\Omega\in\df^k(M)$ whose one-kernel is a distribution of constant rank is called a {\it premultisymplectic form}. Additionally, if $\inn_X\Omega=0$ for a vector field $X\in \mathfrak{X}(M)$ implies that $X=0$, then $\Omega$ is said to be {\it one-nondegenerate} and it becomes a {\it multisymplectic form}. 
The pair $(M,\Omega)$ is said to be a 
{\it premultisymplectic} or a {\it multisymplectic manifold} of {\it degree $k$}, if the one-kernel of $\Omega$ is one-degenerate or one-nondegenerate, respectively. \end{definition}
	
First examples of multisymplectic manifolds are 
{\it symplectic manifolds}, i.e. 
multisymplectic manifolds of degree $2$, and
{\it orientable manifolds}, namely
multisymplectic manifolds with a {\it volume form}. 

The following is a linear analogue of (pre)multisymplectic manifolds.

\begin{definition}
A $k$-covector $\Omega$ on $\mathbb{R}^n$ is called a {\it premultisymplectic linear form}. If $\inn_v\Omega=0$ for some $x\in \mathbb{R}^n$ implies that $v=0$, then $\Omega$ is said to be {\it one-nondegenerate} and it becomes a {\it multisymplectic linear form} or {\it $k$-plectic linear form}. 
The pair $(\mathbb{R}^n,\Omega)$ is said to be a 
{\it premultisymplectic linear space} or a {\it linear multisymplectic linear space} of {\it degree $k$}, respectively. Multisymplectic linear spaces given by a $k$-covector are also called {\it $k$-plectic vector spaces}. \end{definition}

Other typical examples of multisymplectic manifolds
are given by the so-called {\it bundles of forms}, which, in addition, are the canonical models of multisymplectic manifolds.  These canonical models are constructed as follows.
\begin{itemize}
\item
Let $Q$ be a manifold. Consider the bundle $\rho\colon\Lambda^{k}(\T^*Q)\to Q$, i.e. the {\it  bundle of $k$-forms} in $Q$ (also called the {\it $k$-multicotangent bundle} of $Q$).
This bundle is endowed with a canonical structure called 
the {\it tautological} or {\it canonical form}
$\Theta_Q\in\df^{k}(\Lambda^{k}(\T^*Q))$ given by
 $$
\Theta_{Q_{\widehat{\mu}}}(V_1,\ldots ,V_k)=\inn(\rho_*V_1\wedge\ldots\wedge\rho_* V_k)\widehat{\mu},
 $$
  for every $\widehat{\mu}\in\Lambda^k(\T^*Q)$
 and $V_1,\ldots,V_k\in\T_{\widehat{\mu}}(\Lambda^k(\T^*Q))$. 
Then, $\Omega_Q=\d\Theta_Q\in\df^{k+1}(\Lambda^k(\T^*Q))$
is a one-nondegenerate form and hence
$(\Lambda^k(\T^*Q),\Omega_Q)$ is a multisymplectic manifold of degree $k+1$.
Furthermore, denoting by $\{x^i,p_{i_1\ldots i_k}\}$ the charts of natural coordinates in $\Lambda^k(\T^*Q)$,
these canonical forms read locally as
$$
\Theta_Q=p_{i_1\ldots i_k}\d x^{i_1}\wedge\ldots\wedge\d x^{i_k}
\,, \quad
\Omega_Q=\d p_{i_1\ldots i_{k}}\wedge\d x^{i_1}\wedge\ldots\wedge\d x^{i_{k}} \ .
$$
Such coordinates are {\it Darboux coordinates} in $\Lambda^k(\T^*Q)$.
\item
If $\pi\colon Q\to M$ is a fibre bundle,
let $\rho_r\colon\Lambda^k_r(\T^*Q)\to Q$ be the subbundle of $\Lambda^k(\T^*Q)$
made of the $r$-{\it horizontal $k$-forms} in $Q$ with respect to the projection $\pi$, namely the $k$-forms in $Q$ vanishing when applied to $r$ vector fields that in $Q$ that are $\pi$-vertical.
If $\rho^k_r\colon\Lambda_r^k(\T^*Q)\to \Lambda^k(\T^*Q)$ is the canonical injection,
then $\Theta^r_Q=(\rho^k_r)^*\Theta_Q\in\df^k(\Lambda^k_r(\T^*Q))$ is the tautological $k$-form in $\Lambda^k_r(\T^*Q)$, and then,
taking $\Omega^r_Q=\d\Theta^r_Q\in\df^{k+1}(\Lambda^k_r(\T^*Q))$,
we have that
$(\Lambda^k_r(\T^*Q),\Omega^r_Q)$ is a multisymplectic manifold of degree $k+1$.
As above, the charts of natural coordinates in $\Lambda^k_r(\T^*Q)$
are also charts of Darboux coordinates, on which these canonical forms have local expressions similar to the above ones. 
\end{itemize}

Nevertheless, in general, multisymplectic manifolds are not (locally)
diffeomorphic to these canonical models.

Note that a multisymplectic form with Darboux coordinates admits a local flat connection compatible with it. Furthermore, if the multisymplectic form has a compatible flat connection, it admits coordinates in which the multisymplectic form has constant coordinates, but it does not need to be of the previous form. In particular, if a multisymplectic form has kernels of higher order to those of $\Omega_Q$, then there is no Darboux theorem in the above senses. In particular, this is a typical problem for Darboux coordinates: differential forms can be put into a form with constants coefficients in many manners and Darboux theorems use to stress one particular form over others, although others may be of interest too.

In general, multisymplectic manifolds do not need to have a coordinate system that makes the multisymplectic form to have constant coordinates, which is the very most basic condition for the existence of a Darboux theorem. Indeed, multisymplectic manifolds of this type are called {\it flat} in the literature \cite{RW_19}. The exact definition is given next.

\begin{definition} A multisymplectic manifold $(M, \omega)$ is called \textit{flat near} $x\in M$, if there exists a mapping $\phi : U\subset M \rightarrow \T_xM$ such that $\phi(x) = 0$ and $\phi^*\omega_x=\omega$ for $\omega_x$ being a constant-coefficient non-degenerate multilinear-form on $\T_xM$. 

\end{definition}

\begin{definition}
An $(n+1)$-plectic vector space $(V, \omega)$ is called \textit{standard} if there exists a linear subspace $W\subset V$ such that $\inn_{u\wedge v}\omega= 0$ for all $u, v \in W$, and
$$
    \omega^\sharp: w\in W\mapsto \omega^\sharp(w)\in  \Lambda^n(V/W)^*
$$
such that $\omega^\sharp(w)(v_1 + W, \dotsc, v_n + W)=\omega(w, v_1, \dotsc, v_n)$ for every $v_1,\ldots,v_n\in V$, 
is an isomorphism.
\end{definition}

 In the above situation, $W$ is unique if $n\geq 2$ and then often denoted $W_\omega$.
From 
\cite{CIL_99,Mar_88,RW_19}, the following result can easily be derived.

\begin{theorem}
Let $n \geq 2$ and let $(M, \omega)$ be a standard $(n+1)$-plectic manifold, i.e. $(M, \omega)$ has as constant
linear type fixed standard $(n+1)$-plectic vector space. Then, $W_\omega =\bigsqcup_{x\in M}W_{\omega_x}\subset TM$ is a smooth distribution. Furthermore, $(M, \omega)$ is flat if and only if $W_\omega$ is integrable.        
\end{theorem}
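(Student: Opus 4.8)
The plan is to establish the two claims separately. Throughout, $(V_0,\omega_0)$ is the fixed standard $(n+1)$-plectic model, $W_0=W_{\omega_0}\subset V_0$ its distinguished subspace (unique because $n\ge2$), and $q=\dim V_0-\dim W_0$; then $\dim W_0=\binom qn$, and constancy of the linear type forces $\dim\T_xM=\dim V_0$ and $\dim\T_xM-\dim W_{\omega_x}=q$ for every $x\in M$, so a distribution of rank $\binom qn$ with transverse rank $q$ has total rank $\dim M$.

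\emph{Smoothness of $W_\omega$.} The constant-linear-type hypothesis means that $P=\{(x,u)\mid u\colon V_0\to\T_xM\text{ a linear isomorphism},\ u^*\omega_x=\omega_0\}$ meets each fibre of the frame bundle of $M$ in a coset of $G=\mathrm{Aut}(V_0,\omega_0)$. First I would verify that $P$ is a genuine smooth principal $G$-subbundle: the smooth map $(x,u)\mapsto u^*\omega_x$ restricts on each fibre to an orbit map $g\mapsto g^*\omega_x$, which is a submersion onto the $\mathrm{GL}(V_0)$-orbit of $\omega_0$, so $P$ is the preimage of $\omega_0$ under a fibrewise submersion onto that orbit. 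Since $n\ge2$, $W_0$ is intrinsically determined by $\omega_0$, hence $G$-invariant, so $P\times_G W_0$ is a smooth subbundle of $P\times_G V_0=\T M$; for $u\in P_x$ the relation $u^*\omega_x=\omega_0$ gives $u(W_0)=W_{\omega_x}$, so this subbundle is exactly $W_\omega=\bigsqcup_x W_{\omega_x}$, a smooth distribution of rank $\binom qn$.

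\emph{Flat $\iff$ $W_\omega$ integrable.} For the forward implication, let $\phi\colon U\to\T_xM$ be as in the definition of flatness; since $\phi^*\omega_x=\omega$ is $1$-nondegenerate, each $\d\phi_y$ is injective, so (shrinking $U$) $\phi$ is a local diffeomorphism onto an open subset of $\T_xM\cong\R^{\dim M}$. A constant-coefficient form $\omega_x$ has one and the same distinguished subspace $L\subset\R^{\dim M}$ at every point, and uniqueness ($n\ge2$) gives $W_{\omega_y}=(\d\phi_y)^{-1}(L)$; hence $W_\omega$ is the tangent distribution of the $\phi$-preimage of the affine foliation $\{p+L\}$ of $\R^{\dim M}$ and is integrable. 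For the converse, assume $W_\omega$ integrable and use Frobenius to choose coordinates $\{u^a,v^i\}$, $a=1,\dots,\binom qn$, $i=1,\dots,q$, centred at $x$ with $W_\omega=\langle\partial/\partial u^a\rangle$. Then $\omega|_{W_\omega\times W_\omega}=0$ forces $\omega$ to have at most one $\d u^a$-factor, so $\omega=\alpha+\sum_a\d u^a\wedge\beta_a$ with $\alpha,\beta_a$ built only from the $\d v^i$. Splitting $\d\omega=0$ by the number of $\d u$-factors, the two-factor part gives $\partial_{u^a}\beta_b=\partial_{u^b}\beta_a$; the Poincaré lemma in the $u$-directions then yields a single $n$-form $\Psi$ in the $\d v^i$ with $\beta_a=\partial_{u^a}\Psi$, and the lower-order parts force $\alpha=\d_v\Psi$ after absorbing a $u$-independent closed form into $\Psi$ (here $\d_v$ differentiates only the $v$'s). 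Thus locally $\omega=\d\Psi$ with $\Psi\in\Gamma(\Lambda^nW_\omega^{\circ})$. Now $\iota_{\partial/\partial u^a}\omega=\partial_{u^a}\Psi$, so non-degeneracy of $\omega^\sharp$ is precisely that the matrix $(\partial\Psi_I/\partial u^a)$, with $I$ ranging over the $n$-subsets of $\{1,\dots,q\}$, is invertible; consequently $\widetilde u^{\,a}:=\Psi_{I(a)}(u,v)-\Psi_{I(a)}(x)$ (with $I(1),\dots,I(\binom qn)$ an enumeration of those subsets) together with the $v^i$ form a chart about $x$ in which $\Psi=\sum_a\widetilde u^{\,a}\,\d v^{I(a)}$, hence $\omega=\d\Psi=\sum_a\d\widetilde u^{\,a}\wedge\d v^{I(a)}$ has constant coefficients. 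This chart, composed with the identification of its target with $\T_xM$ via the coordinate frame at $x$, is the map exhibiting flatness near $x$.

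The genuinely routine parts are the index bookkeeping in the last step and the check that $\Psi$ may be taken to annihilate $W_\omega$. The step I expect to be the main obstacle is the smoothness of $W_\omega$: one must be sure that ``constant pointwise linear type'' really yields a smooth reduction $P$ of the frame bundle — equivalently, that $g\mapsto g^*\omega_x$ is a submersion onto the $\mathrm{GL}(V_0)$-orbit of $\omega_0$ — and this, like the identification $u(W_0)=W_{\omega_x}$, leans again on $n\ge2$ through the intrinsic characterisation (hence $G$-invariance) of $W_0$.
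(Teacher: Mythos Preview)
The paper does not actually prove this theorem; it is stated there as a known result derived from \cite{CIL_99,Mar_88,RW_19}. Your argument is correct and follows essentially the standard route from those references: smoothness of $W_\omega$ via reduction of the frame bundle to a $G$-structure with $G=\mathrm{Aut}(V_0,\omega_0)$ (using $n\ge2$ so that $W_0$ is $G$-invariant), the forward implication by pulling back the constant affine foliation under the flattening chart, and the converse by producing a potential $\Psi\in\Gamma(\Lambda^n W_\omega^\circ)$ in Frobenius coordinates for $W_\omega$ and then using invertibility of $(\partial\Psi_I/\partial u^a)$ to make the components $\Psi_I$ into new fibre coordinates in which $\omega$ has constant coefficients.
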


Let us just recall that our definition of $(n+1)$-plectic symplectic manifold is sometimes called a $n$-plectic manifold in the literature \cite{RW_19}.

Let us now turn to a type of multisymplectic manifold for which we will obtain Darboux coordinates. 

\begin{definition}
A {\it special multisymplectic manifold} is a multisymplectic
manifold $(M,\Omega)$ of degree $k$ such that
$\Omega=\d\Theta$, for some $\Theta\in\df^{k-1}(M)$, and
there is a diffeomorphism $\phi\colon M\to \Lambda^{k-1}(\T^*Q)$,
$\dim\, Q=n\geq k-1$,
(or $\phi\colon M\to \Lambda^{k-1}_r(\T^*Q)$),
and a fibration $\pi\colon M\to Q$
such that $\rho\circ\phi=\pi$
(resp. $\rho_r\circ\phi=\pi$),
and $\phi^*\Theta_Q=\Theta$ (resp. $\phi^*\Theta_Q^r=\Theta$).
\end{definition}

And, as a result of the above discussion, we state the following result.

\begin{theorem}[Restricted multisymplectic Darboux Theorem]
Special multisymplectic manifolds $(M,\Omega)$ are {\it multisymplectomorphic} to bundles of forms.
Therefore, there is a local chart of {\it Darboux coordinates}
 around every point $x\in M$.
\end{theorem}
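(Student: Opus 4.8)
The plan is to observe that the diffeomorphism $\phi$ which already appears in the definition of a special multisymplectic manifold is itself the sought multisymplectomorphism, so that the statement reduces to the previously recalled existence of Darboux coordinates on bundles of forms. First I would note that, since the exterior derivative commutes with pullbacks, the defining identity $\phi^*\Theta_Q = \Theta$ (resp.\ $\phi^*\Theta_Q^r = \Theta$) gives
\[
\phi^*\Omega_Q = \phi^*(\d\Theta_Q) = \d(\phi^*\Theta_Q) = \d\Theta = \Omega
\]
(resp.\ $\phi^*\Omega_Q^r = \Omega$). Hence $\phi\colon (M,\Omega)\to(\Lambda^{k-1}(\T^*Q),\Omega_Q)$ (resp.\ $\phi\colon(M,\Omega)\to(\Lambda^{k-1}_r(\T^*Q),\Omega_Q^r)$) is a diffeomorphism intertwining the two closed $k$-forms, that is, a multisymplectomorphism, and it is moreover compatible with the fibrations over $Q$ through the condition $\rho\circ\phi=\pi$ (resp.\ $\rho_r\circ\phi=\pi$). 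This establishes the first assertion: every special multisymplectic manifold of degree $k$ is multisymplectomorphic to the bundle of $(k-1)$-forms $\Lambda^{k-1}(\T^*Q)$, resp.\ to the bundle of $r$-horizontal $(k-1)$-forms $\Lambda^{k-1}_r(\T^*Q)$.

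For the second assertion, I would invoke the fact recalled above that $\Lambda^{k-1}(\T^*Q)$ carries, around each point, a chart of natural coordinates $\{x^i,p_{i_1\ldots i_{k-1}}\}$ which are Darboux coordinates, i.e.\ in which $\Omega_Q=\d p_{i_1\ldots i_{k-1}}\wedge\d x^{i_1}\wedge\ldots\wedge\d x^{i_{k-1}}$ (and likewise for $\Omega_Q^r$ on $\Lambda^{k-1}_r(\T^*Q)$). Transporting such a chart around $\phi(x)$ back to $M$ by $\phi$ — that is, setting $\widetilde x^i=x^i\circ\phi$ and $\widetilde p_{i_1\ldots i_{k-1}}=p_{i_1\ldots i_{k-1}}\circ\phi$ — yields, because $\phi$ is a diffeomorphism and $\phi^*\Omega_Q=\Omega$, local coordinates around $x$ in which $\Omega$ has exactly the same constant-coefficient canonical expression. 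Thus every point of $M$ admits a Darboux chart.

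There is essentially no obstacle here: the content is built into the definition of a special multisymplectic manifold, and the argument is a one-line pullback identity together with transport of coordinates along $\phi$. The only point that deserves a word of care is the bookkeeping of degrees — a special multisymplectic manifold of degree $k$ is modelled on a bundle of $(k-1)$-forms, whose tautological form $\Theta_Q$ has degree $k-1$ and whose multisymplectic form $\Omega_Q=\d\Theta_Q$ has degree $k$, in agreement with the earlier statement that $(\Lambda^{k-1}(\T^*Q),\Omega_Q)$ is multisymplectic of degree $(k-1)+1=k$ — together with the remark that the whole argument applies verbatim to the $r$-horizontal variant, since $\Theta_Q^r=(\rho^k_r)^*\Theta_Q$ and $\Omega_Q^r=\d\Theta_Q^r$ are again exact and carry a natural atlas of Darboux coordinates.
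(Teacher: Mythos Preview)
Your proof is correct and matches the paper's approach exactly. In fact, the paper presents this theorem without an explicit proof, introducing it only with ``as a result of the above discussion, we state the following result''; your argument is precisely the intended unpacking of the definition---the diffeomorphism $\phi$ built into the notion of special multisymplectic manifold automatically satisfies $\phi^*\Omega_Q=\d(\phi^*\Theta_Q)=\d\Theta=\Omega$, and Darboux coordinates are then pulled back along~$\phi$.
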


Like in the $k$-symplectic and $k$-cosymplectic cases, some additional properties are needed
to assure the existence of Darboux-type coordinates \cite{Mar_88}
and then to have multisymplectic manifolds 
that locally behave as the canonical models.
To state these additional conditions, we need to introduce some generalisations of concepts of symplectic geometry.
So, if $(M,\Omega)$ is a multisymplectic manifold of degree $k$
and ${\mathcal W}$ a distribution on $M$
, we define \cite{CIL_99,LMS_03}
the {\it $r$-orthogonal multisymplectic vector space} at $p\in M$ of $\mathcal{W}$ as
$$
{\mathcal W}_x^{\perp,r}=\{ v\in\T_xM\mid
\inn(v\wedge w_1\wedge\ldots\wedge w_r)\Omega_p=0\,,\
\forall w_1,\ldots,w_r\in{\mathcal W}_x\} \,.
$$
Then, the {\it $r$-orthogonal multisymplectic complement} of ${\cal W}$
is the distribution
${\mathcal W}^{\perp,r}=\displaystyle\bigsqcup_{x\in M}{\mathcal W}_x^{\perp,r}$,
and we say that
${\mathcal W}$ is an {\it $r$-coisotropic} or an {\it $r$-isotropic  distribution} if
\ ${\mathcal W}^{\perp,r}\subset{\mathcal W}$ or
\ ${\mathcal W}\subset{\mathcal W}^{\perp,r}$, respectively
(if \ ${\mathcal W}={\mathcal W}^{\perp,r}$
then ${\mathcal W}$ is an {\it $r$-Lagrangian distribution}). Let us use previous notions. 

\begin{definition}
\label{special-multi}
Let $(M,\Omega)$ be a multisymplectic manifold of degree $k$,
and let  ${\mathcal W}$ be a regular one-isotropic involutive distribution in $(M,\Omega)$.
\begin{enumerate}
\item
A {\it multisymplectic manifold of type $(k,0)$} is a triple 
$(M,\Omega,{\mathcal W})$ such that,
for every $x\in M$,
\begin{enumerate}
\item
$\dim {\mathcal W}(x)=\dim \Lambda^{k-1}(\T_xM/{\mathcal W}(x))^*$.
\item
$\dim (\T_xM/{\mathcal W}(x))>k-1$.
\end{enumerate}
\item
A {\it  multisymplectic manifold of type $(k,r)$}
($1\leq r\leq k-1$) is a quadruple
$(M,\Omega,{\mathcal W},{\mathcal E})$,
where ${\mathcal E}$ is a distribution on $M$ such that, for every $x\in M$, one has that
${\mathcal E}(x)$ is a vector subspace of $\T_xM/{\mathcal W}(x)$
satisfying the following properties:
\begin{enumerate}
\item
If $\pi_x\colon\T_xM\to\T_xM/{\mathcal W}(x)$ is
the canonical projection, then
$\inn(v_1\wedge\ldots\wedge v_r)\Omega_x=0$, for every $v_i\in\T_xM$
such that $\pi_x(v_i)\in{\mathcal E}(x)$ ($i=1,\ldots,r$).
\item
$\dim {\mathcal W}(x)=\dim \Lambda_r^{k-1}(\T_xM/{\mathcal W}(x))^*$,
where the horizontal forms are considered with respect to the
subspace ${\mathcal E}(x)$.
\item
$\dim (\T_xM/{\mathcal W}(x))>k-1$.
\end{enumerate}
\end{enumerate}
\end{definition}

Then, the fundamental result is the following \cite[Corollary 3.31]{LMS_03}.

\begin{theorem}[Generalised multisymplectic Darboux Theorem]
Every multisymplectic manifold $(M,\Omega)$ of type $(k,0)$ 
(resp. of type $(k,r)$)
is locally multisymplectomorphic to a bundle of $(k-1)$-forms
$\Lambda^{k-1}(\T^*Q)$ (resp. $\Lambda^{k-1}_r(\T^*Q)$),
for some manifold $Q$; that is, to a canonical multisymplectic manifold.
Therefore, there is a local chart of {\it Darboux coordinates}
 around every point $x\in M$.
\end{theorem}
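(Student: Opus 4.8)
The plan is to reduce the statement to the Restricted multisymplectic Darboux Theorem stated above, by showing that near each point a multisymplectic manifold of type $(k,0)$ (resp.\ of type $(k,r)$) in the sense of Definition~\ref{special-multi} is locally multisymplectomorphic to a bundle of forms; concretely, I would build the required fibration and multisymplectomorphism by hand. Since $\mathcal{W}$ is a regular involutive distribution, the Frobenius theorem gives, on a neighbourhood $U$ of $x$, a submersion $\pi\colon U\to Q$ with $\ker\T\pi=\mathcal{W}$, which after shrinking is a trivial bundle, and I shall use the canonical isomorphism $\T\pi\colon\T_yM/\mathcal{W}(y)\xrightarrow{\sim}\T_{\pi(y)}Q$. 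By hypothesis~(a) in the type $(k,0)$ case the fibres of $\pi$ have dimension $\dim\mathcal{W}(y)=\dim\Lambda^{k-1}(\T^*_{\pi(y)}Q)$, the same as those of $\rho\colon\Lambda^{k-1}(\T^*Q)\to Q$. Because $\mathcal{W}$ is one-isotropic, $\inn_{X\wedge Y}\Omega=0$ for $X,Y\in\mathcal{W}$, so for a $\pi$-vertical $X$ the form $\inn_X\Omega$ is $\pi$-horizontal and descends to $\omega^\sharp_y(X)\in\Lambda^{k-1}(\T^*_{\pi(y)}Q)$; one-nondegeneracy of $\Omega$ makes $\omega^\sharp_y\colon\mathcal{W}(y)\to\Lambda^{k-1}(\T^*_{\pi(y)}Q)$ injective, and together with (a) and (b) this says exactly that each $(\T_yM,\Omega_y,\mathcal{W}(y))$ is a standard $k$-plectic vector space with $\omega^\sharp_y$ an isomorphism.

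The main point is to produce, on a perhaps smaller $U$, a primitive $\Theta\in\Omega^{k-1}(U)$ of $\Omega$ that is $\pi$-horizontal, i.e.\ $\inn_X\Theta=0$ for every $X\in\mathcal{W}$. In a chart $U\cong B_Q\times B_F$ adapted to $\pi$, with $B_F$ a ball, one-isotropy forces $\Omega$ to have no monomial involving two or more differentials of fibre coordinates, so $\Omega=\Omega^{[0]}+\Omega^{[1]}$ in the bigrading by the number of fibre differentials. Writing $\d=\d_Q+\d_F$, closedness of $\Omega$ gives $\d_Q\Omega^{[0]}=0$, $\d_F\Omega^{[0]}+\d_Q\Omega^{[1]}=0$ and $\d_F\Omega^{[1]}=0$; a fibrewise Poincaré lemma with parameters in $Q$ produces a fibre-horizontal $\Theta$ with $\d_F\Theta=\Omega^{[1]}$, and after correcting $\Theta$ by the pull-back of a suitable $(k-1)$-form on $B_Q$---available since $\d_Q\Omega^{[0]}=0$ on the contractible $B_Q$---one also gets $\d_Q\Theta=\Omega^{[0]}$, hence $\d\Theta=\Omega$ with $\Theta$ horizontal.

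Given such a $\Theta$, define $\phi\colon U\to\Lambda^{k-1}(\T^*Q)$ by $\rho\circ\phi=\pi$ together with $\langle\phi(y),v_1\wedge\dots\wedge v_{k-1}\rangle=\Theta_y(\widetilde v_1,\dots,\widetilde v_{k-1})$ for any lifts $\widetilde v_i\in\T_yM$ of the $v_i\in\T_{\pi(y)}Q$; horizontality makes this independent of the chosen lifts. The tautological property of $\Theta_Q$ gives $\phi^*\Theta_Q=\Theta$ at once, hence $\phi^*\Omega_Q=\d\phi^*\Theta_Q=\d\Theta=\Omega$. Moreover $\phi$ is a bundle map over $Q$, so $\T_y\phi$ acts as the identity on base directions and, on a fibre direction $X\in\mathcal{W}(y)$, its derivative---computed from $\Lie_X\Theta=\inn_X\d\Theta+\d\inn_X\Theta=\inn_X\Omega$, the last term vanishing by horizontality---equals $\omega^\sharp_y$, which is an isomorphism by the pointwise standardness established above. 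Hence $\T_y\phi$ is an isomorphism and $\phi$ restricts to a diffeomorphism of a neighbourhood of $x$ onto a neighbourhood of $\phi(x)$ in $\Lambda^{k-1}(\T^*Q)$. This exhibits $(U,\Omega)$ as (locally) a special multisymplectic manifold, so the Restricted multisymplectic Darboux Theorem applies; equivalently, $\phi$ pulls back the canonical coordinates of $\Lambda^{k-1}(\T^*Q)$ to Darboux coordinates around $x$.

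In the type $(k,r)$ case the same argument applies with $\Lambda^{k-1}(\T^*Q)$ replaced by $\Lambda^{k-1}_r(\T^*Q)$: the distribution $\mathcal{E}$ descends to a distribution on $Q$ which, after shrinking and using its involutivity, is the vertical bundle of a fibration $\pi'\colon Q\to B$; one then works with $r$-horizontal $(k-1)$-forms relative to $\pi'$, and conditions (a)--(c) of Definition~\ref{special-multi} in the $(k,r)$ case supply the analogous dimension count, the $r$-standard pointwise model, and the compatibility of $\Theta$ with $\mathcal{E}$. I expect the principal difficulties to be the construction of the horizontal primitive $\Theta$ and the identification of the fibrewise derivative of $\phi$ with $\omega^\sharp$; in the $(k,r)$ case one must in addition extract a genuine fibration of $Q$ from $\mathcal{E}$, which is precisely where the integrability built into the type-$(k,r)$ data enters.
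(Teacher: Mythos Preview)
The paper does not actually prove this theorem; it is quoted from \cite[Corollary~3.31]{LMS_03} and no argument is given. So there is nothing in the paper to compare your proof against directly.

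For the type $(k,0)$ case your sketch is correct and is essentially the standard route taken in the cited reference: Frobenius produces the local fibration $\pi\colon U\to Q$ with vertical bundle~$\mathcal{W}$; the bigraded Poincar\'e-lemma argument yields a $\pi$-horizontal primitive~$\Theta$; the map $\phi$ defined by the tautological property satisfies $\phi^*\Theta_Q=\Theta$; and the identification of the vertical part of $\T\phi$ with the isomorphism $\omega^\sharp_y$ (using conditions~(a) and one-nondegeneracy) shows $\phi$ is a local diffeomorphism. This part is fine.

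For the type $(k,r)$ case there is a genuine gap. Definition~\ref{special-multi} does \emph{not} assert that $\mathcal{E}$ is involutive, nor that the subspaces $\mathcal{E}(x)\subset\T_xM/\mathcal{W}(x)$ are constant along the leaves of~$\mathcal{W}$; without the latter, $\mathcal{E}$ need not descend to a distribution on~$Q$, and without the former you cannot straighten it into a fibration $\pi'\colon Q\to B$ as you propose. You flag this as ``precisely where the integrability built into the type-$(k,r)$ data enters'', but no such integrability is built into the definition as stated in the paper. A second gap: even granting the fibration on~$Q$, you must arrange that the horizontal primitive~$\Theta$ is itself $r$-horizontal with respect to~$\mathcal{E}$ so that $\phi$ lands in $\Lambda^{k-1}_r(\T^*Q)$; condition~(a) controls $\Omega$, not $\Theta$, and your bigraded construction does not obviously produce an $r$-horizontal~$\Theta$. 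Both issues are handled in \cite{LMS_03} under hypotheses that are somewhat stronger or differently phrased than what the present paper records.
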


\begin{definition}
Multisymplectic manifolds that are locally multisymplectomorphic
to bundles of forms are called 
{\it  locally special multisymplectic manifolds}.
\end{definition}

As a relevant example, if $\pi\colon E\to M$ is a fiber bundle 
(where $M$ is an $m$-dimensional oriented manifold), 
$J^1\pi$ is the corresponding
first-order jet bundle, and ${\cal L}$ is a first-order regular or hyperregular 
Lagrangian density, then the Poincar\'e--Cartan form 
$\Omega_{\cal L}\in\df^{m+1}(J^1\pi)$
is a multisymplectic form and $(J^1\pi,\Omega_{\cal L})$ is a
(locally) special multisymplectic manifold.
If ${\cal L}$ is a singular Lagrangian, then $(J^1\pi,\Omega_{\cal L})$ is a premultisymplectic manifold.

\begin{definition}
A {\it special premultisymplectic manifold} is a premultisymplectic
manifold $(M,\Omega)$ of degree $k$ such that $M/\ker \Omega$ is a manifold and the unique multisymplectic form $\Omega'$ on $M/\ker \Omega$ such that $\pi^*\Omega'=\Omega$ is a special multisymplectic form.
\end{definition}

The following naturally follows.

\begin{definition}
\label{special-premulti}
Let $(M,\Omega)$ be a premultisymplectic manifold of degree $k$,
and ${\mathcal W}$ a regular one-isotropic involutive distribution in $(M,\Omega)$ such that $\ker \Omega\subset \mathcal{W}$ and $d=\dim \ker\Omega$.
\begin{enumerate}
\item
A {\it premultisymplectic manifold of type $(d,k,0)$} is a triple 
$(M,\Omega,{\mathcal W})$ such that,
for every $x\in M$,
\begin{enumerate}
\item
$\dim {\mathcal W}(x)-d=\dim \Lambda^{k-1}(\T_xM/{\mathcal W}(x))^*$.
\item
$\dim (\T_xM/{\mathcal W}(x))>k-1$.
\end{enumerate}
\item
A {\it premultisymplectic manifold of type $(d,k,r)$}
($1\leq r\leq k-1$) is a quadruple
$(M,\Omega,{\mathcal W},{\mathcal E})$,
where ${\mathcal E}$ is a distribution on $M$ such that, for every $x\in M$, the  space
${\mathcal E}(x)$ is a vector subspace of $\T_xM/{\mathcal W}(x)$
with the following properties:
\begin{enumerate}
\item
If $\pi_x\colon\T_xM\to\T_xM/{\mathcal W}(x)$ is
the canonical projection, then
$\inn(v_1\wedge\ldots\wedge v_r)\Omega_p=0$, for every $v_i\in\T_xM$
such that $\pi_x(v_i)\in{\mathcal E}(x)$, $i=1,\ldots,r$.
\item
$\dim {\mathcal W}(x)-d=\dim \Lambda_r^{k-1}(\T_xM/{\mathcal W}(x))^*$,
where the horizontal forms are considered with respect to the
subspace ${\mathcal E}(x)$.
\item
$\dim (\T_xM/{\mathcal W}(x))>k-1$.
\end{enumerate}
\end{enumerate}
\end{definition}


\begin{theorem}[Generalised premultisymplectic Darboux Theorem]
Every premultisymplectic manifold $(M,\Omega)$ of type $(d,k,0)$ 
(resp. of type $(d,k,r)$)
is locally premultisymplectomorphic to a canonical premultisymplectic manifold of type $(d,k,0)$ (resp. of type $(d,k,r)$).
Therefore, there is a local chart of {\it Darboux coordinates}
 around every point $x\in M$.
\end{theorem}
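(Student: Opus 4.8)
The plan is to reduce the statement to the Generalised multisymplectic Darboux Theorem already established above, by passing to the quotient of $M$ by the one-kernel of $\Omega$. Since $\Omega$ is closed and $\ker\Omega$ is a regular distribution of rank $d$, it is involutive, so under our standing local assumptions the leaf space $\widetilde M = M/\ker\Omega$ is a manifold, the projection $\pi\colon M\to\widetilde M$ is a surjective submersion, and there is a unique $k$-form $\widetilde\Omega\in\df^k(\widetilde M)$ with $\pi^*\widetilde\Omega=\Omega$. This $\widetilde\Omega$ is closed, and it is one-nondegenerate precisely because the whole one-kernel has been quotiented out; hence $(\widetilde M,\widetilde\Omega)$ is a multisymplectic manifold of degree $k$, and, by the very definition of special premultisymplectic manifold, the canonical premultisymplectic models of type $(d,k,0)$ (resp. of type $(d,k,r)$) are exactly those premultisymplectic manifolds whose associated quotient multisymplectic manifold is a bundle of $(k-1)$-forms $\Lambda^{k-1}(\cT Q)$ (resp. $\Lambda^{k-1}_r(\cT Q)$).

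Next I would transport the auxiliary distributions to $\widetilde M$. Because $\ker\Omega\subset\mathcal W$ and both distributions are regular and involutive, $\mathcal W$ descends to a regular involutive distribution $\widetilde{\mathcal W}=\pi_*\mathcal W$ on $\widetilde M$, with $\mathcal W(x)=(\T_x\pi)^{-1}(\widetilde{\mathcal W}(\pi(x)))$ for every $x\in M$, and $\widetilde{\mathcal W}$ is one-isotropic for $\widetilde\Omega$ since $\mathcal W$ is one-isotropic for $\Omega=\pi^*\widetilde\Omega$. The key structural fact is the canonical identification
\[
\T_xM/\mathcal W(x)\;\cong\;\T_{\pi(x)}\widetilde M\big/\widetilde{\mathcal W}(\pi(x))\,,\qquad
\dim\mathcal W(x)-d=\dim\widetilde{\mathcal W}(\pi(x))\,.
\]
Inserting these two facts into Definition \ref{special-premulti}, the conditions defining a premultisymplectic manifold of type $(d,k,0)$ become verbatim the conditions of Definition \ref{special-multi} defining a multisymplectic manifold of type $(k,0)$ for $(\widetilde M,\widetilde\Omega,\widetilde{\mathcal W})$; in the type-$(d,k,r)$ case the distribution $\mathcal E$, being already a subdistribution of $\T M/\mathcal W$, is carried to a distribution $\widetilde{\mathcal E}$ of $\T\widetilde M/\widetilde{\mathcal W}$ under the same isomorphism, and its defining properties (a)--(c) transfer in the same way.

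Then I would apply the Generalised multisymplectic Darboux Theorem to $(\widetilde M,\widetilde\Omega,\widetilde{\mathcal W})$ (resp. to $(\widetilde M,\widetilde\Omega,\widetilde{\mathcal W},\widetilde{\mathcal E})$): around $\pi(x)$ there is a chart of Darboux coordinates in which $\widetilde\Omega$ has the canonical constant-coefficient expression of a bundle of $(k-1)$-forms. I would pull these coordinates back along $\pi$ and complete them with $d$ further functions $z^1,\dots,z^d$ that are functionally independent from them along the leaves of $\ker\Omega$; this is possible because $\pi$ is a submersion, so the pulled-back functions together with leafwise coordinates on $\ker\Omega$ form a genuine chart around $x$. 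Since $\pi^*\widetilde\Omega=\Omega$, in this chart $\Omega$ has the same constant-coefficient expression, not involving the $z^j$, which exhibits the desired local premultisymplectomorphism onto the canonical premultisymplectic model of type $(d,k,0)$ (resp. of type $(d,k,r)$).

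The step I expect to require the most care is checking that the dimension conditions in Definition \ref{special-premulti} are genuinely equivalent, after the shift by $d=\dim\ker\Omega$, to those of Definition \ref{special-multi} for the quotient, and that the one-isotropy and involutivity of $\mathcal W$, together with the horizontality property of $\mathcal E$, descend correctly under $\pi$; this is elementary linear algebra together with the standard behaviour of distributions under surjective submersions, but it is the place where an oversight would most easily creep in. Everything else is a direct transcription of the multisymplectic case plus the standard constant-rank reduction by $\ker\Omega$.
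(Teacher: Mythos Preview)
The paper states this theorem without proof. Your approach --- quotienting by $\ker\Omega$, checking that $(\mathcal W,\mathcal E)$ descend to data of type $(k,0)$ (resp.\ $(k,r)$) on $(\widetilde M,\widetilde\Omega)$, invoking the Generalised multisymplectic Darboux Theorem there, and pulling back the resulting Darboux chart together with $d$ leafwise coordinates for $\ker\Omega$ --- is correct and is manifestly the argument the authors intend: Definition~\ref{special-premulti} is tailored so that the shift by $d=\dim\ker\Omega$ converts its conditions verbatim into those of Definition~\ref{special-multi} on the quotient, and the preceding definition of a special premultisymplectic manifold is already phrased in terms of $M/\ker\Omega$ for the same reason. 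Your identification $\T_xM/\mathcal W(x)\cong \T_{\pi(x)}\widetilde M/\widetilde{\mathcal W}(\pi(x))$ and the descent of one-isotropy and involutivity are routine, so your caveat about that step is well placed but not a genuine obstacle.
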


As in previous structures, analogous claims can be done concerning the existence of connected compatible connections with premultisymplectic manifolds.


\section{Conclusions and outlook}\label{sec:outlook}

The focus of this research is the exploration of Darboux-type theorems concerning geometric structures defined by closed differential forms.
The initial section of this study entails an examination of the Darboux theorem for symplectic, presymplectic, and cosymplectic manifolds.
By imposing minimal regularity conditions, we have successfully established a proof for a Darboux theorem applicable to precosymplectic manifolds. Within the realm of geometric mechanics, these manifolds serve as the phase spaces for both regular and singular autonomous and non-autonomous dynamical systems.

We have presented novel proofs for the Darboux theorem concerning $k$-symplectic and $k$-cosymplectic manifolds. These proofs appear to be simpler compared to the previously known ones. Additionally, we have introduced and demonstrated new Darboux theorems for specific families of $k$-presymplectic and $k$-precosymplectic manifolds. Furthermore, we have provided a counterexample illustrating that a general Darboux-type theorem does not hold for $k$-presymplectic manifolds. We have conducted a thorough review of previous findings regarding the existence of Darboux coordinates for certain types of multisymplectic manifolds. Lastly, we have presented fresh results that establish the existence of Darboux coordinates for particular cases of premultisymplectic manifolds. All of these structures play a vital role in the geometric representation of both regular and singular classical field theories. The relations of Darboux theorems with flat connections have been studied, which provides new viewpoints and gathers previous scattered results in the literature.

The ideas of this paper can be extended to other geometric structures related with closed one or two-forms of different types. 
Notwithstanding, the formalism on flat compatible connections does not apply to geometric structures related to families of different forms that do not allow for a locally constant form and therefore closed, e.g. for contact and precontact structures, and their extensions
(which appear, for instance, in the geometric description of dissipative and action-dependent systems in physics). It would be interesting to find an analogue of our formalism for such theories. In particular, note that non-closed differential forms may have flat compatible connections provided torsion different from zero is allowed. For instance, consider the manifold $M = \R^3$ with natural coordinates $\{t,x,p\}$, the one-form $\eta = \d t - p\d x$ and the connection $\nabla$ in $M$ whose only non-vanishing Christoffel symbol is $\Gamma_{px}^t = -1$. It is easy to check that $\eta$ is a contact one-form on $M$, and parallel relative to the connection $\nabla$, namely $\nabla\eta = 0$. However, the connection $\nabla$ is not torsion-free: its torsion has local expression $T = \d x\otimes\d p\otimes\dparder{}{t} - \d p\otimes\d x\otimes\dparder{}{t}$. This torsion takes account of the non-integrability of the contact distribution $D = \ker\eta$. Meanwhile, $\nabla$ is flat. The relation between integrability of a geometric structure and the torsion of compatible connections will be investigated in a future work.

Moreover, this work has studied conditions for Darboux theorems of various types. We believe that there is still room to provide more types of Darboux coordinates, and that more research in the study of necessary and sufficient conditions for their existence is needed. In particular, this specially applies to $k$-pre(co)symplectic manifolds. 
\subsection*{Acknowledgments}
We thank M. de León and J. Gaset for fruitful discussions and comments.
We acknowledge partial financial support from the 
{\it Spanish Ministry of Science and Innovation}, grants PID2021-125515NB-C21, PID2021-125515NB-C22, and RED2022-134301-T of AEI, and of the {\it Ministry of Research and Universities of
the Catalan Government}, project 2021 SGR 00603 {\sl Geometry of Manifolds and Applications, GEOMVAP}.
J. de Lucas and X. Rivas acknowledge partial financial support from project IDUB with number PSP: 501-D111-20-2004310. X. Rivas would like to thank the cordiality shown during his stays at the Faculty of Physics of the University of Warsaw financed from the above mentioned IDUB project.

\bibliographystyle{abbrv}
\itemsep 0pt plus 1pt
\small
\bibliography{references.bib}





\end{document}